\documentclass[a4paper, 10pt]{amsart}

\usepackage{amsmath,amsthm,amssymb,graphicx,pinlabel,yhmath,tikz,listings,xcolor}
\usepackage[all]{xy}

\SelectTips{cm}{}

\setlength{\voffset}{-1cm}
\setlength{\hoffset}{-1cm}
\addtolength{\textwidth}{2cm}
\addtolength{\textheight}{2cm}

\allowdisplaybreaks

\usepackage{hyperref}
\hypersetup{colorlinks=true,citecolor=brown,linkcolor=brown,urlcolor=black,filecolor=black}

\theoremstyle{definition}
\newtheorem{definition}{Definition}[section]
\newtheorem{remark}[definition]{Remark}
\newtheorem{example}[definition]{Example}

\theoremstyle{plain}
\newtheorem{theorem}[definition]{Theorem}
\newtheorem{proposition}[definition]{Proposition}
\newtheorem{lemma}[definition]{Lemma}

\makeatletter
    
    \@addtoreset{equation}{section}
  \makeatother
  
 \definecolor{codegreen}{rgb}{0,0.6,0}
\definecolor{codegray}{rgb}{0.5,0.5,0.5}
\definecolor{codepurple}{rgb}{0.58,0,0.82}
\definecolor{backcolour}{rgb}{0.95,0.95,0.92}

\lstdefinestyle{mystyle}{
    backgroundcolor=\color{backcolour},   
    commentstyle=\color{codegreen},
    keywordstyle=\color{magenta},
    numberstyle=\tiny\color{codegray},
    stringstyle=\color{codepurple},
    basicstyle=\ttfamily\footnotesize,
    breakatwhitespace=false,         
    breaklines=true,                 
    captionpos=b,                    
    keepspaces=true,                 
    numbers=left,                    
    numbersep=5pt,                  
    showspaces=false,                
    showstringspaces=false,
    showtabs=false,                  
    tabsize=2
}
\lstset{style=mystyle}

\newcommand{\up}{\vspace{-0.5cm}}

\newcommand{\centre}[1]{\begin{array}{c} #1 \end{array}}

\newcommand{\Q}{\mathbb{Q}}
\newcommand{\Z}{\mathbb{Z}}
\newcommand{\C}{\mathbb{C}}

\newcommand{\calA}{\mathcal{A}}
\newcommand{\calI}{\mathcal{I}}
\newcommand{\calIC}{\mathcal{IC}}
\newcommand{\calM}{\mathcal{M}}
\newcommand{\calC}{\mathcal{C}}
\newcommand{\calK}{\mathcal{K}}
\newcommand{\calKC}{\mathcal{KC}}

\newcommand{\calT}{\mathcal{T}}

\newcommand{\frakL}{\mathfrak{L}}

\newcommand{\Aut}{\operatorname{Aut}}
\newcommand{\IAut}{\operatorname{IAut}}
\newcommand{\Hom}{\operatorname{Hom}}

\newcommand{\clo}[1]{\wideparen{#1}}

\definecolor{wqwqwq}{rgb}{0,0,0}

\newcommand{\hn}[4]
{  \begin{array}{c} 
\labellist \small \hair 2pt 
\pinlabel {\scriptsize $#1$} [B] at 3 0
\pinlabel {\scriptsize $#2$} [B] at 27 0
\pinlabel {\scriptsize $#3$} [B] at 52 0
\pinlabel {\scriptsize $#4$} [B] at 78 0
\endlabellist
\includegraphics[scale=0.5]{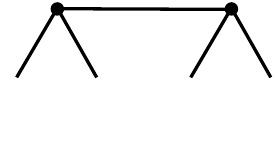}\\
\end{array}  }

\newcommand{\lsixtree}[6]{\tikz[baseline=8pt ,scale = 0.5]{
\draw [thick, color=wqwqwq] (0,1.25)-- (0,0.25);
\draw [thick,color=wqwqwq] (0,0.75)-- (3,0.75);
\draw [thick,color=wqwqwq] (3,1.25)-- (3,0.25);
\draw [thick,color=wqwqwq] (1,0.75)-- (1,1.5);
\draw [thick,color=wqwqwq] (2,0.75)-- (2,1.5);
\draw [black, fill = black] (0,0.75) circle (.5ex);
\draw [black, fill = black] (1,0.75) circle (.5ex);
\draw [black, fill = black] (2,0.75) circle (.5ex);
\draw [black, fill = black] (3,0.75) circle (.5ex);
\draw[color=wqwqwq] (0,0) node {\small $#1$};
\draw[color=wqwqwq] (0,1.6) node {\small $#2$};
\draw[color=wqwqwq] (1,1.75) node {\small $#3$};
\draw[color=wqwqwq] (2,1.75) node {\small $#4$};
\draw[color=wqwqwq] (3,1.6) node {\small $#5$};
\draw[color=wqwqwq] (3,0) node {\small $#6$};}}

\newcommand{\ltree}[4]{\tikz[baseline=8pt, scale = 0.5]{
\draw [thick,color=wqwqwq] (0,2-1.25)-- (2,2-1.25);
\draw [thick,color=wqwqwq] (0,1.5)-- (0,0);
\draw [thick,color=wqwqwq] (2,1.5)-- (2,0);
\draw [black, fill = black] (0,0.75) circle (.5ex);
\draw [black, fill = black] (2,0.75) circle (.5ex);
\draw[color=wqwqwq] (0,3-1.25) node {\small $#1$};
\draw[color=wqwqwq] (0,1.1-1.25) node {\small $#2$};
\draw[color=wqwqwq] (2,1.1-1.25) node {\small $#3$};
\draw[color=wqwqwq] (2,3-1.25) node {\small $#4$};}}

\newcommand{\lfivetree}[5]{\tikz[baseline=8pt ,scale = 0.5]{
\draw [thick,color=wqwqwq] (0,1.25)-- (0,0.25);
\draw [thick,color=wqwqwq] (0,0.75)-- (2,0.75);
\draw [thick,color=wqwqwq] (2,1.25)-- (2,0.25);
\draw [thick,color=wqwqwq] (1,0.75)-- (1,1.5);
\draw [black, fill = black] (0,0.75) circle (.5ex);
\draw [black, fill = black] (1,0.75) circle (.5ex);
\draw [black, fill = black] (2,0.75) circle (.5ex);
\draw[color=wqwqwq] (0,0) node {\small $#1$};
\draw[color=wqwqwq] (0,1.6) node {\small $#2$};
\draw[color=wqwqwq] (1,1.75) node {\small $#3$};
\draw[color=wqwqwq] (2,1.6) node {\small $#4$};
\draw[color=wqwqwq] (2,0) node {\small $#5$};}}

\newcommand{\ltritree}[3]{\tikz[baseline=8pt ,scale = 0.5]{
\draw [thick,color=wqwqwq] (1,1.5)-- (1,0.75);
\draw [thick,color=wqwqwq] (1,0.75)-- (1-0.866*0.75,0.75-.37);
\draw [thick,color=wqwqwq] (1,0.75)-- (1+0.866*0.75,0.75-.37);
\draw [black, fill = black] (1,0.75) circle (.5ex);
\draw[color=wqwqwq] (1,1.5+0.2) node {\small $#1$};
\draw[color=wqwqwq] (1-0.866*0.75-0.2,0.75-.37-0.2) node {\small $#2$};
\draw[color=wqwqwq] (1+0.866*0.75+0.2,0.75-.37-0.2) node {\small $#3$};}}

\newcommand{\treetwo}[2]{\tikz[baseline=8pt, scale = 0.5]{
\draw[thick,color=wqwqwq] (0.5,0)--(0.5,0.75);
\draw[thick,color=wqwqwq] (0.5,0.75)--(0,1.25);
\draw[thick,color=wqwqwq] (0.5,0.75)--(1,1.25);
\draw [black, fill = black] (0.5,0.75) circle (.5ex);
\draw[color=wqwqwq] (0,1.6) node {\small $#1$};
\draw[color=wqwqwq] (1,1.6) node {\small $#2$};}}

\newcommand{\treethree}[3]{\tikz[baseline=8pt, scale = 0.8]{
\draw[thick,color=wqwqwq] (0.5,0)--(0.5,0.25);
\draw[thick,color=wqwqwq] (0.5,0.25)--(0,0.75);
\draw[thick,color=wqwqwq] (0.25,0.5)--(0.5,0.75);
\draw[thick,color=wqwqwq] (0.5,0.25)--(1,0.75);
\draw [black, fill = black] (0.5,0.25) circle (.3ex);
\draw [black, fill = black] (0.25,0.5) circle (.3ex);
\draw[color=wqwqwq] (0,1) node {\small $#1$};
\draw[color=wqwqwq] (0.5,1) node {\small $#2$};
\draw[color=wqwqwq] (1,1) node {\small $#3$};}}

\newcommand{\stick}[1]{\tikz[baseline=8pt, scale = 0.4]{
\draw[thick,color=wqwqwq] (0,0)--(0,1.5);
\draw[color=wqwqwq] (0,2) node {\small $#1$};}}

\newcommand{\trait}{\!\!\begin{array}{c}{\rule{5mm}{0.4mm}}\\[0.1cm] \end{array}\!\!}

\DeclareSymbolFont{extraitalic}      {U}{zavm}{m}{it}
\DeclareMathSymbol{\stigma}{\mathord}{extraitalic}{168}

\begin{document}

\title[On the non-triviality of the torsion subgroup  of the abelianized Johnson kernel \ ]{On the non-triviality of the torsion subgroup  of the abelianized Johnson kernel}

\author{Quentin Faes}
\address{ {IMB}, Universit\'e  de Bourgogne \& CNRS, 21000 Dijon, France }
\email{{quentin.faes@u-bourgogne.fr}}

\author{Gw\'ena\"el Massuyeau}
\address{ {IMB}, Universit\'e  de Bourgogne  \& CNRS, 21000 Dijon, France }
\email{{gwenael.massuyeau@u-bourgogne.fr}}

\date{5 October 2023}

\subjclass[2010]{}
\keywords{}
\thanks{}

\begin{abstract}
The \emph{Johnson kernel} is the subgroup of the mapping class group of a closed oriented surface that is generated by 
Dehn twists along   separating    simple closed curves. The rational abelianization of the Johnson kernel  
has been computed by Dimca, Hain and Papadima,
and a more explicit form was subsequently provided  by Morita, Sakasai and Suzuki.
Based on these results,  Nozaki, Sato and Suzuki used the theory of finite-type invariants of $3$-manifolds
to prove that   the torsion subgroup of the abelianized Johnson kernel is non-trivial.  

In this paper, we give a purely $2$-dimensional proof of the non-triviality of   this torsion subgroup and provide     a lower bound for its cardinality.
Our main tool is  the action of the mapping class group on the Malcev Lie algebra of the fundamental group of the surface.
Using the same infinitesimal techniques, we   also     provide an alternative  diagrammatic description of the rational abelianized Johnson kernel, 
and we  include  in the results the case of an oriented surface with one boundary component.
\end{abstract}

\maketitle

\vspace{-0.5cm}
 \tableofcontents

\section{Introduction} \label{sec:intro}

Let $\Sigma$ be a compact connected oriented surface with one boundary component, 
and denote by $\calM:=\calM(\Sigma)$ the mapping class group of $\Sigma$. 
Although its abelianization  is trivial  for ${g\geq 3}$~\cite{Powell},  the group $\calM$ has remarkable subgroups 
with highly non-trivial abelianization. This is particularly manifest for the \emph{Torelli group}, 
which is the subgroup $\calI:= \calI(\Sigma)$ of $\calM$ acting trivially on the homology $H:=H_1(\Sigma;\Z)$ of the surface.
In his fundamental works of the eighties (including \cite{Jo83,Jo85a,Jo85b}), 
Johnson proved that (like $\calM$) the group $\calI$ is finitely generated 
and that (unlike  $\calM$) it has an interesting abelianization $\calI_{\operatorname{ab}}$. 
In fact, Johnson gave a full characterization of  $\calI_{\operatorname{ab}}$, revealing that
its torsion-free quotient  is isomorphic to $\Lambda^3 H$
and that its  torsion subgroup is isomorphic to the space of quadratic  boolean functions on the   space    of spin structures of $\Sigma$.
The  map  $\calI \to \Lambda^3 H$  (corresponding to the canonical projection of $\calI_{\operatorname{ab}}$ onto its torsion-free quotient)
 is the first  $\tau_1$ of a series of homomorphisms $(\tau_k)_k$, 
which are now referred to as the \emph{Johnson homomorphisms} 
and are defined on the successive terms of  the \emph{Johnson filtration} $\big(\calM[k]\big)_k$.
In particular, the subgroup $\calK:=  \calM[2]$ of $\calI=\calM[1]$ plays a very important role in Johnson's works:
called the \emph{Johnson kernel}, $\calK$ is generated by Dehn twists along   separating    simple closed curves. 
  In the sequel, those generators  will be refered to as  ``separating twists''.   

Besides, Johnson did similar constructions and proved  similar results for a closed oriented surface of genus $g\geq 3$.
Here it will be convenient to think of it as the surface $\clo{\Sigma}$  obtained by gluing a $2$-disk to $\Sigma$.
Then, for simplicity,  we shall denote by $\clo{\calM}$ the mapping class group of $\clo{\Sigma}$,
by  $\clo{\calI}$ its Torelli group and by $\clo{\calK}$ its Johnson kernel.

In the last decade, major advances on the abelianization of the Johnson kernel have been accomplished for the closed surface $\clo{\Sigma}$.
Firstly, Dimca and Papadima proved that the rational abelianization $\clo{\calK}_{\operatorname{ab}}\otimes \Q$ is finite-dimensional \cite{DP}.
Later, using this result and Hain's description of the Malcev Lie algebra of $\clo{\calI}$ \cite{Hain}, Dimca, Hain and Papadima  
computed this vector space \cite{DHP}. 
More recently, Morita, Sakasai and Suzuki  \cite{MSS} could express this computation of $\clo{\calK}_{\operatorname{ab}}\otimes \Q$ 
 in a more explicit form, involving  two homomorphisms on $\clo{\calK}$ that Morita introduced in the nineties:
 namely, a by-product $\clo{d}$ of the Casson invariant  \cite{Mor91} and a ``refined'' version of the second Johnson homomorphism $\tau_2$ \cite{Mor93a}. 
As we shall see in \S \ref{sec:rational}, the results of \cite{DHP,MSS}  can  be adapted to the case of the bordered surface $\Sigma$:
this will give us the opportunity to revisit these results
and provide an alternative  diagrammatic description of ${\calK}_{\operatorname{ab}}\otimes \Q$.
 
Using the computation of $\clo{\calK}_{\operatorname{ab}}\otimes \Q$  given in \cite{MSS} and appealing to the theory of finite-type invariants of $3$-manifolds, 
Nozaki, Sato and Suzuki \cite{NSS} were able to show that the torsion subgroup of $\clo{\calK}_{\operatorname{ab}}$ is non-trivial.
To be more explicit on their methods,  let us mention that they use 
the \emph{LMO homomorphism} (which is a universal rational finite-type invariant of homology cylinders \cite{CHM,HM}),
and that their arguments involve $3$-dimensional surgery techniques (which are known as  \emph{clasper calculus} \cite{Goussarov,Habiro}). 
Hence their proof requires a certain level of expertise in the theory of finite-type invariants,
and it does not conclude with an explicit torsion element of $\clo{\calK}_{\operatorname{ab}}$.

Regarding this result  of Nozaki, Sato and Suzuki, our goal in this paper is two-fold. On the one hand,
we provide explicit elements  of the torsion subgroup of $\clo{\calK}_{\operatorname{ab}}$,
and we prove their non-triviality by purely $2$-dimensional methods.
Thus, we hope  to make their result accessible to a wider audience, 
and to open the way towards a full  computation of the torsion subgroup of~$\clo{\calK}_{\operatorname{ab}}$.
 On the other hand, we also deal with the case of the bordered surface $\Sigma$:\\

\noindent
\textbf{Theorem A.} \emph{For  a compact connected oriented surface with $0$ or $1$ boundary component, of genus $g\geq 6$,
the abelianization of the Johnson kernel has a non-trivial torsion subgroup.}\\

Our proof  of Theorem A is based on the action of $\calM$ on the Malcev Lie algebra of the fundamental group $\pi_1(\Sigma)$.
The possibility of such a proof is not so surprising, 
since Nozaki, Sato and Suzuki only use the tree-reduction of the LMO homomorphism in their arguments
and, according to \cite{Massuyeau}, the latter  encodes in some way the action of the Torelli group $\calI$  
on  the Malcev Lie algebra of  $\pi_1(\Sigma)$.
(See Remark \ref{rem:R_0} and Remark~\ref{rem:R_0_bis} for the exact relationship with the arguments of \cite{NSS}.)

The proof of Theorem A, which is done in \S \ref{sec:proofs}, can be summarized as follows.
We use the diagrammatic description of  the action of $\calI$ on the  Malcev Lie algebra of $\pi_1(\Sigma)$ that is given in~\cite{Massuyeau}.
From this \emph{infinitesimal Dehn--Nielsen representation}, 
we derive in \S \ref{sec:DN} a map $R$ from the Johnson kernel $\calK$ to a torsion abelian group.
Although this map $R$ is only polynomial of degree~$2$, 
it restricts  on the fourth term $\calM[4]$ of the Johnson filtration to a homomorphism
(which is a reduction of $\tau_4$). 
Then we exhibit an explicit element of $\calM[4]$, which is not seen by the ``core'' of the Casson invariant~$d$
but is detected by the map $R$:\\

\noindent
\textbf{Theorem B.} \emph{Assume that $g\geq 3$.
There exists a $\varphi \in \calM[4]$ such that $d(\varphi)=0$ and $R(\varphi)\neq 0$. 
Moreover, its extension $\clo{\varphi}  \in \clo{\calM}[4]$ to the closed surface $\clo{\Sigma}$ also satisfies
$\clo{d}(\clo{\varphi})=0$ and $R(\clo{\varphi})\neq 0$.}\\

Since the kernel of Morita's refinement of $\tau_2$ is $\calM[4]$, 
we deduce from Theorem B and the above-mentioned computation of $\calK_{\operatorname{ab}} \otimes \Q$
that  $\varphi$ provides a non-trivial torsion element of $\calK_{\operatorname{ab}}$, thus proving Theorem A.
Our proof of Theorem B is purely two-dimensional and involves a rather long computation of $R(\varphi)$.
We also give a second proof of Theorem B, leading to another explicit element   $\varphi'$:   
closer to the original arguments of \cite{NSS}, 
this proof is certainly less computational,   but it is done in the  $3$-dimensional  framework of homology cylinders and needs
the  techniques of clasper calculus, including results of  \cite{Goussarov,Habiro,GL,MM,CST16}.
Indeed, the restriction of $R$ to $\calM[4]$ is the reduction of $\tau_4$ that Conant, Schneiderman and Teichner
considered in \cite{CST16} under the name of ``higher-order Sato--Levine invariant'' 
(by analogy with the study of Milnor invariants of links).

We conclude the paper by giving in \S \ref{subsec:complements} lower bounds
for the cardinalities of the torsion subgroups of $\calK_{\operatorname{ab}}$ and~$\clo{\calK}_{\operatorname{ab}}$. These lower bounds
 are obtained by a rough estimation of the image of $R$, 
   using the canonical action of the mapping class group on the abelianized Johnson kernel.  \\

\noindent
\textbf{Acknowledgment.} 
The authors would like to thank Lucy Moser--Jauslin   for helpful discussions about algebraic groups.   
This research has been partly funded by the project ``ITIQ-3D'' of the Région Bourgogne Franche--Comté and the project ``AlMaRe'' (ANR-19-CE40-0001-01).
 The IMB receives support from the EIPHI Graduate School (ANR-17-EURE-0002).

\section{The infinitesimal Dehn--Nielsen representation and the map  $R$} \label{sec:DN}

The Dehn--Nielsen representation of the mapping class group is defined by its canonical action  on the fundamental group of the surface.
We review an infinitesimal version of the Dehn--Nielsen representation   that has been introduced in~\cite{Massuyeau}.
Then we derive from this a quadratic map  $R$,  from the abelianized Johnson kernel to a torsion abelian group.

\subsection{The space of tree diagrams} \label{subsec:trees}

We first recall what is the target of  the infinitesimal Dehn--Nielsen representation.
A \emph{tree diagram} is a finite, unitrivalent, connected, acyclic graph whose trivalent vertices are oriented
(i.e$.$ edges are cyclically ordered around each trivalent vertex), and whose univalent vertices are colored by~$H=H_1(\Sigma;\Z)$:
the former are called \emph{nodes} and the latter are called  \emph{leaves}.
For example, here is a tree diagram with 3 nodes and 5 leaves:
\begin{equation} \label{eq:tree_ex}
{\labellist \small \hair 2pt
\pinlabel {$a$} [tr] at 0 0
\pinlabel {$b$} [br] at 0 59
\pinlabel {$c$} [b] at 69 76
\pinlabel {$d$} [bl] at 138 63
\pinlabel {$e$} [tl] at 135 0
\endlabellist}
\includegraphics[scale=0.3]{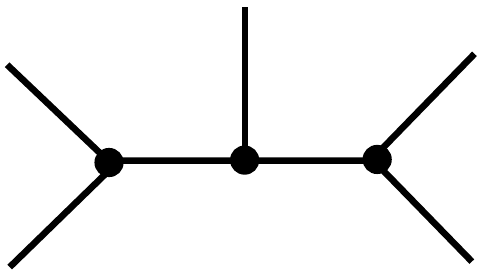}
\quad \quad \quad \quad  (\hbox{where } a,b,c,d,e \in H).
\end{equation}
\vspace{-0.3cm}

\noindent
Here, and henceforth,  orientations at trivalent vertices are always given by the trigonometric orientation of the plane.
Let $\calT(H)$
be the abelian group generated by tree diagrams modulo the following relations: \\[0.2cm]
\begin{center}
\labellist \small \hair 2pt
\pinlabel {AS} [t] at 102 -5
\pinlabel {IHX} [t] at 543 -5
\pinlabel {multilinearity} [t] at 1036 -5
\pinlabel {$= \ -$}  at 102 46
\pinlabel {$-$} at 484 46
\pinlabel {$+$} at 606 46
\pinlabel {$=0$} at 721 46 
\pinlabel {$+$} at 1106 46
\pinlabel {$=$} at 961 46
\pinlabel{$h_1+h_2$} [b] at 881 89
\pinlabel{$h_1$} [b] at 1042 89
\pinlabel{$h_2$} [b] at 1170 89
\endlabellist
\centering
\includegraphics[scale=0.28]{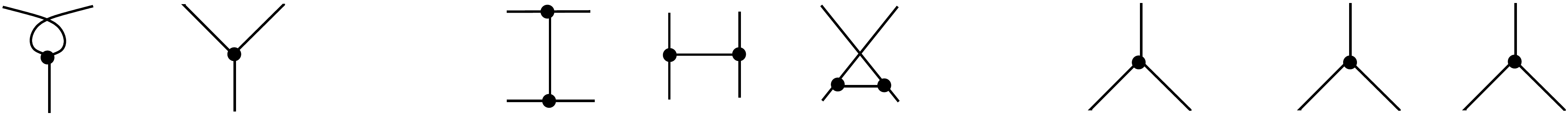}
\end{center}\vspace{0.3cm}
By defining the \emph{degree} of a tree diagram to be the number of nodes, 
we turn $\calT(H)$ into a graded abelian group:
$$
\calT(H) = \bigoplus_{d=1}^{+ \infty} \calT_d(H)
$$ 
The abelian group $\calT(H)$ has the structure of a Lie ring, which involves the intersection pairing $\omega:H \times H \to \Z$
of $\Sigma$. Specifically, the bracket of two tree diagrams $P$ and $Q$ is  given by
$$
[P,Q] :=  
\begin{array}{c} (\hbox{\small sum of all ways of $\omega$-connecting \emph{one} leaf of $P$ to \emph{one} leaf of $Q$})\end{array}
$$
where ``$\omega$-connecting'' an $x$-colored vertex  $u$ of $P$ to a $y$-colored vertex $v$ of $Q$  
results in the element of $\calT(H)$ obtained by gluing $u$ to $v$ and multiplying by $\omega(x,y)$.

Setting $H^\Q:=H_1(\Sigma;\Q)$, we define a graded $\Q$-vector space $\calT(H^\Q)$ in a similar way  by generators and relations.
Note that  we have $\calT(H^\Q) \simeq \calT(H) \otimes \Q $, and $\calT(H^\Q)$ has the structure of a Lie algebra.
This space or, to be more accurate its degree-completion  $\hat\calT(H^\Q)$, will be used in the next subsection
as the target of the ``infinitesimal'' Dehn--Nielsen representation.

The abelian group  $\calT(H)$ and the  vector space $\calT(H^\Q)$ appear in the study of Milnor invariants of links 
and Johnson homomorphisms for mapping class groups,
and they constitute the ``tree levels'' of the theories of finite-type invariants  (see for instance \cite{HM00,GL,HP,Massuyeau}).    
Consequently, their structure has been much studied.
Before reviewing their relevance for the study of mapping class groups,
we now recall what is known about this structure in relation with free Lie rings.

Let $\frakL:=\frakL(H)$ be the  Lie ring  freely generated by $H$
and, similarly, let $\frakL^\Q:=\frakL(H^\Q)$ be the Lie algebra freely generated by $H^\Q$.
For any integer $k\geq 1$, we denote by $\mathsf{D}_k(H)$ the kernel of the Lie bracket map  $H\otimes \frakL_{k+1} \to \frakL_{k+2}$
and we set $\mathsf{D}(H):= \bigoplus_k\mathsf{D}_k(H)$.
There is a homomorphism of graded abelian groups
\begin{equation} \label{eq:eta}
\eta:  \calT(H) \stackrel{}{\longrightarrow} \mathsf{D}(H)
\end{equation}
which maps any tree diagram $T$ to the sum
$$
\sum_{v}  \hbox{col}(v) \otimes \hbox{brack}(T_v)
$$
over all leaves $v$ of $T$: here $\hbox{col}(v)$ denotes the color of $v$, and $\hbox{brack}(T_v)$ is the iterated Lie bracket
defined by the tree $T$ \emph{rooted} at $v$. In the example \eqref{eq:tree_ex}, the tree rooted at its $d$-colored vertex defines\\[0.2cm]
$$
\operatorname{brack}\Bigg(\begin{array}{c}
\labellist \small \hair 2pt
\pinlabel {$e$} [b] at 2 162
\pinlabel {$a$} [b] at 111 162
\pinlabel {$b$} [b] at 167 162
\pinlabel {$c$} [b] at 222 162
\pinlabel {root} [t] at 108 5
\endlabellist
\includegraphics[scale=0.3]{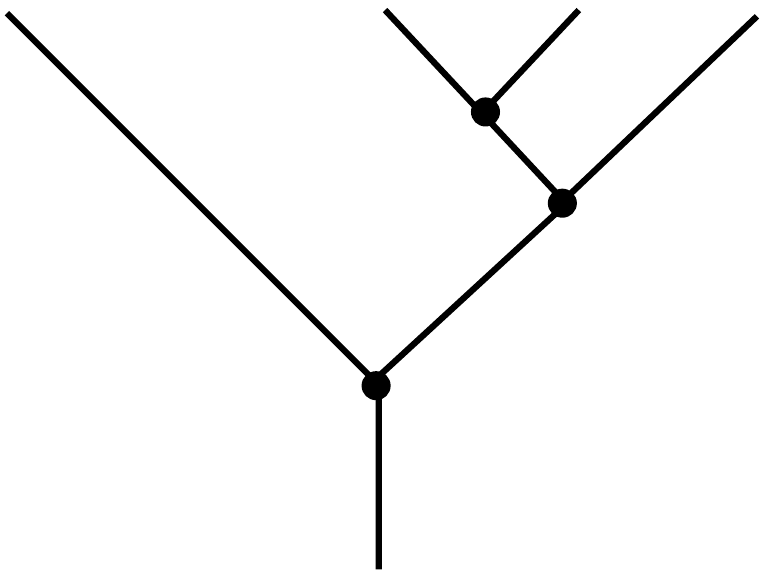} 
\end{array}\Bigg) =  [e,[[a,b],c]].
\quad
$$
The map $\eta$ is known to be an isomorphism with rational coefficients 
\begin{equation} \label{eq:etaQ}
\eta^\Q:  \calT(H^\Q) \stackrel{\simeq}{\longrightarrow} \mathsf{D}(H^\Q)
\end{equation}
(see, for instance, \cite{HP}),
but it is not bijective with   $\Z$-linear    coefficients.
Indeed $\mathsf{D}(H)$ is free abelian but, as we shall recall, $\calT(H)$ has $2$-torsion in odd degrees.

To understand the lack of bijectivity of $\eta$, Levine  considers the quasi-Lie ring $\frakL':= \frakL'(H)$
freely generated by $H$. Recall from \cite{Levine} 
that the definition of  a ``quasi-Lie'' ring  requires the bracket to be only skew-symetric, instead of alternate as a Lie bracket should be.
Levine proves that the canonical map $\frakL' \to \frakL$ is an isomorphism in odd degree and that, in even degree, there is a short exact sequence
\begin{equation} \label{eq:quasi-Lie}
0 \to \frakL_k \otimes \Z_2 \longrightarrow  \frakL'_{2k}  \longrightarrow \frakL_{2k} \to 0 
\end{equation}
where the left-hand homomorphism is defined by $x\otimes 1 \mapsto [x,x]$. 
For any $k\geq 1$, let $\mathsf{D}'_k(H)$ be the kernel of the quasi-Lie bracket $H \otimes \frakL'_{k+1} \to \frakL'_{k+2}$
and set $\mathsf{D}'(H):= \bigoplus_k\mathsf{D}'_k(H)$.
As a consequence of \eqref{eq:quasi-Lie}, there are short exact sequences
\begin{equation} \label{eq:Levine1}
0 \to  \mathsf{D}'_{2k}(H) \longrightarrow  \mathsf{D}_{2k}(H)  \longrightarrow \frakL_{k+1} \otimes \Z_2 \to 0,
\end{equation}
\begin{equation} \label{eq:Levine2}
0 \to H\otimes \frakL_{k+1} \otimes \Z_2 \longrightarrow  \mathsf{D}'_{2k+1}(H)  \longrightarrow \mathsf{D}_{2k+1}(H)  \to 0 .
\end{equation}
Levine  observes that the map $\eta$ can be defined similarly in the quasi-Lie case to get a surjective homomorphism
$$
\eta':  \calT(H) \stackrel{}{\longrightarrow} \mathsf{D}'(H),
$$
and the injectivity of $\eta'$ is proved by Conant, Schneiderman and Teichner in \cite{CST12}. Hence the   following commutative diagram:   
$$
\xymatrix{
 \calT(H) \ar[dr]_{\eta'}^-\simeq \ar[r]^{\eta} &  \mathsf{D}(H) \\
 &\mathsf{D}'(H)  \ar[u]
}
$$
Thus, the above results combine to the following statement:

\begin{theorem}[Levine,  Conant--Schneiderman--Teichner]
For any integer $k\geq 1$, there are short exact sequences
\begin{equation} \label{eq:LCST1}
0 \to  \mathcal{T}_{2k}(H) \stackrel{\eta}{\longrightarrow} \mathsf{D}_{2k}(H)   \stackrel{\varpi}\longrightarrow \frakL_{k+1} \otimes \Z_2 \to 0,
\end{equation}
\begin{equation} \label{eq:LCST2}
0 \to H\otimes \frakL_{k+1} \otimes \Z_2 \stackrel{\iota}{\longrightarrow}  \mathcal{T}_{2k+1}(H)  \stackrel{\eta}{\longrightarrow}  \mathsf{D}_{2k+1}(H)  \to 0 .
\end{equation}
\end{theorem}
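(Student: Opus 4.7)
The plan is to deduce both short exact sequences directly from the commutative triangle
$$
\xymatrix{
 \calT(H) \ar[dr]_{\eta'}^-\simeq \ar[r]^{\eta} &  \mathsf{D}(H) \\
 &\mathsf{D}'(H)  \ar[u]_{p}
}
$$
together with Levine's two short exact sequences \eqref{eq:Levine1} and \eqref{eq:Levine2}, which describe the kernel and cokernel of the canonical map $p: \mathsf{D}'(H)\to \mathsf{D}(H)$ in each degree. The key input that the authors cite as already established is that $\eta'$ is an isomorphism (Levine proves surjectivity; Conant--Schneiderman--Teichner prove injectivity); everything else is a formal diagram chase.

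First I would treat the even case. Since $\eta'$ is a graded isomorphism, it identifies $\calT_{2k}(H)$ with $\mathsf{D}'_{2k}(H)$, so $\eta = p \circ \eta'$ on $\calT_{2k}(H)$ is, up to this isomorphism, exactly the map $p$ in degree $2k$. Then \eqref{eq:Levine1} reads
$$
0 \to  \mathsf{D}'_{2k}(H) \stackrel{p}{\longrightarrow}  \mathsf{D}_{2k}(H)  \stackrel{\varpi}{\longrightarrow} \frakL_{k+1} \otimes \Z_2 \to 0,
$$
and transporting the left-hand term along $(\eta')^{-1}$ immediately yields \eqref{eq:LCST1}. The map $\varpi$ that appears in the statement is therefore nothing but the cokernel map from \eqref{eq:Levine1}.

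Next I would treat the odd case in the same fashion. In degree $2k+1$, Levine's sequence \eqref{eq:Levine2} is
$$
0 \to H\otimes \frakL_{k+1} \otimes \Z_2 \longrightarrow  \mathsf{D}'_{2k+1}(H)  \stackrel{p}{\longrightarrow} \mathsf{D}_{2k+1}(H)  \to 0,
$$
so it suffices to replace $\mathsf{D}'_{2k+1}(H)$ by $\calT_{2k+1}(H)$ via $\eta'$ and to note that the composite $p\circ \eta' = \eta$. This produces \eqref{eq:LCST2}, and defines $\iota$ as the composite of the inclusion of $H\otimes \frakL_{k+1}\otimes \Z_2$ into $\mathsf{D}'_{2k+1}(H)$ with $(\eta')^{-1}$.

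In short, there is no obstacle beyond invoking the two already-cited deep results (Levine's identification of the quasi-Lie versus Lie discrepancy and the injectivity theorem of Conant--Schneiderman--Teichner); the proof itself is a two-step diagram chase. The only genuine subtlety worth spelling out is that the factorisation $\eta = p\circ \eta'$ must hold on the nose, which is clear from the parallel definitions of $\eta$ and $\eta'$ (summing over leaves the iterated bracket of the rooted tree, computed in $\frakL$ and in $\frakL'$ respectively, then composing with the canonical map $\frakL'\to \frakL$).
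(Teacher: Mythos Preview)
Your proposal is correct and matches the paper's own approach: the paper does not give a separate proof but simply says ``the above results combine to the following statement'', and what you have written is precisely the diagram chase that effects this combination, transporting Levine's sequences \eqref{eq:Levine1} and \eqref{eq:Levine2} along the isomorphism $\eta'$ to obtain \eqref{eq:LCST1} and \eqref{eq:LCST2}.
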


\noindent
Furthermore, the homomorphism $\varpi$ is uniquely determined on the free abelian group  $ \mathsf{D}_{2k}(H)$ by the fact that 
\begin{equation} \label{eq:varpi}
\varpi\Big(  \frac 1 2  \eta(u \trait u) \Big) = \hbox{brack}(u) \otimes 1
\end{equation}
for any rooted tree $u$. (The map $\iota$ can also be explicitly defined, but we shall not need \eqref{eq:LCST2}.)
 
 \begin{remark} \label{rem:any}
 Apart from the Lie algebra structure on $\calT(H)$ which needs the symplectic form $\omega$ on $H$, 
 all  constructions and results of this subsection work for any free abelian group $H$. \hfill $\blacksquare$
 \end{remark}

\subsection{The infinitesimal Dehn--Nielsen representation} \label{subsec:infinitesimal}

The central ingredient to define the ``infinitesimal'' version of the Dehn--Nielsen representation is
the following notion.

Let $\pi:= \pi_1(\Sigma,\star)$ be the fundamental group of $\Sigma$ based at a point $\star \in \partial \Sigma$,
and let $\hat \frakL^\Q$ be the degree-completion of $\frakL^\Q$. A \emph{symplectic logansion} of $\pi$ is a map $\theta: \pi \to \hat \frakL^\Q$ 
with the following properties:
\begin{itemize}
\item for each $x\in \pi$, the Lie series $\theta(x)$ starts in degree $1$ with the class of $x$ in $\frac{\pi}{[\pi,\pi]}\otimes \Q \simeq H^\Q$;
\item for all $x,y\in \pi$, we have $\theta(xy) = \theta(x) \star \theta(y)$ where $\star$ denotes the Baker--Campbell--Hausdorff 
product\footnote{Abbreviated to \emph{BCH product} in the sequel.} in  $\hat \frakL^\Q$ induced by its Lie bracket;
\item $\theta$ maps the class $\zeta:=[\partial \Sigma]$ of the oriented boundary of $\Sigma$ to $-\omega$
where $\omega\in \Lambda^2 H^\Q \simeq \frakL^\Q_2$ is the dual of the intersection pairing.
\end{itemize}

\begin{remark}
A symplectic logansion is  a ``symplectic expansion'' in the sense of~\cite{Massuyeau},
composed  with the logarithm series to transform group-like elements into primitive elements. \hfill $\blacksquare$
\end{remark}

For concrete computations, we shall use in \S \ref{sec:proofs} the following instance of a symplectic logansion.

\begin{example} \label{ex:logansion}
Let $(\alpha_1,\dots,\alpha_g,\beta_1,\dots,\beta_g)$ be the system of ``meridians  \& parallels'' shown in Figure \ref{fig:basis},
which defines a basis of the free group $\pi$ and induces a basis $(a_1,\dots,a_g,b_1,\dots,b_g)$ of the free abelian group $H$.
According to \cite[Example 2.19]{Massuyeau},
there is a symplectic logansion $\theta$ which, in degree~$\leq 4$,  is given by
\begin{eqnarray*}
 \theta({\alpha_i}) & =&  a_i - \frac{1}{2}[a_i,b_i] + \frac{1}{12} [[a_i,b_i],b_i] - \frac{1}{2} \sum_{j<i}  [[a_j,b_j],a_i] \\
&&- \frac{1}{24} [a_i,[a_i,[a_i,b_i]]] + \frac{1}{4} \sum_{j<i} [[a_j,b_j],[a_i,b_i]] + (\deg \geq 5),\\[0.1cm]
\theta({\beta_i}) & =&  b_i - \frac{1}{2}[a_i,b_i] + \frac{1}{12} [a_i,[a_i,b_i]] + \frac{1}{4} [[a_i,b_i],b_i] 
 + \frac{1}{2} \sum_{j<i} [b_i,[a_j,b_j]] \\
&& -\frac{1}{24} [[[a_i,b_i],b_i],b_i]  + \frac{1}{4} \sum_{j<i} [[a_j,b_j],[a_i,b_i]] +  (\deg \geq 5).
\end{eqnarray*} \hfill $\blacksquare$
\end{example}

\begin{figure}
\includegraphics[scale=0.8]{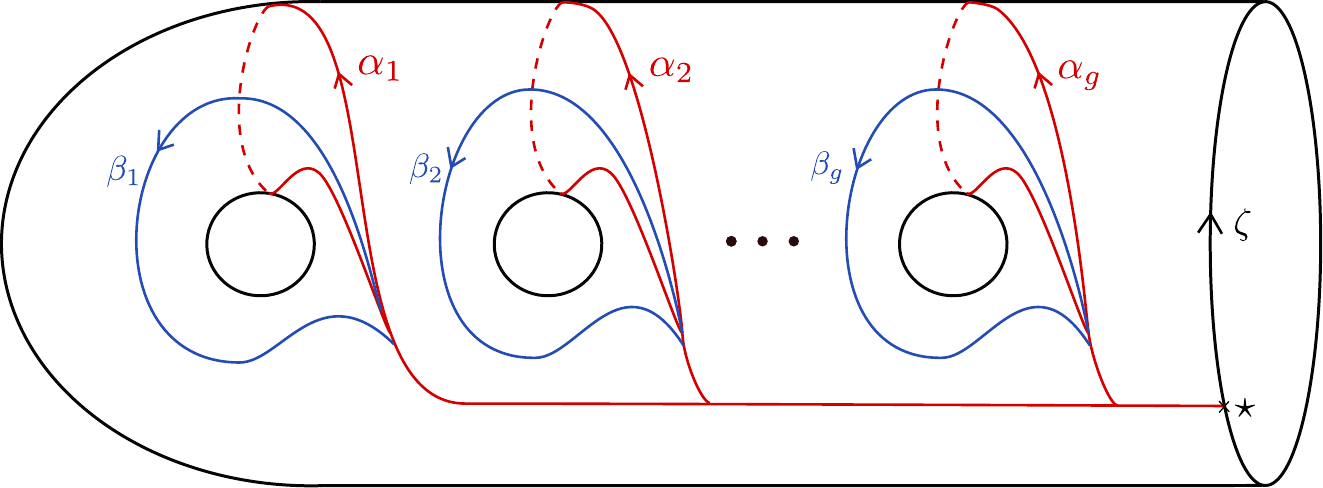}
\caption{A system of ``meridians  \& parallels'' in the oriented surface $\Sigma$}
\label{fig:basis}
\end{figure}

To pursue towards the definition of the ``infinitesimal'' version of the Dehn--Nielsen representation, we introduce the following notations.
Let  $\hbox{Aut}(\hat \frakL^\Q)$ be the group of filtered automorphisms of $\hat \frakL^\Q$, 
and let  $\hbox{Aut}_\omega(\hat \frakL^\Q)$ be the subgroup of $\hbox{Aut}(\hat \frakL^\Q)$ fixing $\omega\in \Lambda^2H^\Q \simeq \frakL_2(H^\Q)$.
As explained in \cite[\S 3.1]{Massuyeau}, the canonical action of $\calM$ on $\pi$, namely the \emph{Dehn--Nielsen representation}
$$
\rho: \calM \longrightarrow \hbox{Aut}_\zeta(\pi),
$$
is equivalent to an action of $\calM$ on the Malcev Lie algebra of $\pi$ and, via the symplectic logansion~$\theta$, 
the latter can  equivalenty be regarded as an action
$$
\varrho^\theta: \calM \longrightarrow \hbox{Aut}_\omega(\hat \frakL^\Q)
$$
of $\calM$ on the filtered Lie algebra $\hat\frakL^\Q$. Furthermore, $\varrho^\theta$ maps the Torelli group $\calI \subset \calM$
to the subgroup $\hbox{IAut}_\omega(\hat \frakL^\Q)$ of automorphisms that induce the identity at the graded level,
and this is the source of a bijection
$$
\log: \hbox{IAut}_\omega(\hat \frakL^\Q) \stackrel{\simeq}{\longrightarrow}  \hbox{Der}_\omega^+(\hat \frakL^\Q)
$$
onto the space of derivations of $\hat \frakL^\Q$ that strictly increase degrees and vanish on $\omega$.

It is well-known that $\hbox{Der}_\omega^+( \frakL)$ (resp.  $\hbox{Der}_\omega^+( \frakL^\Q)$)
 is canonically isomorphic to $\mathsf{D}(H)$ (resp. to $\mathsf{D}(H^\Q)$): 
 specifically, a derivation $d$ restricts to a homomorphism $H \to \frakL$
 and, so, induces an element of $H \otimes \frakL$ using the isomorphism $H \simeq H^*$ defined by $h\mapsto\omega(h,-)$;
then the symplectic condition $d(\omega)=0$ implies that this element of $H \otimes \frakL$ belongs to the subgroup $\mathsf{D}(H)$.
Thus, in the sequel, we will use interchangeably  $\hbox{Der}_\omega^+( \frakL)$  
(resp.  $\hbox{Der}_\omega^+( \frakL^\Q)$) and $\mathsf{D}(H)$  (resp.  $\mathsf{D}(H^\Q)$).
Recall that the space of derivations has a canonical Lie bracket given by $[d_1,d_2] := d_1 \circ d_2-d_2 \circ d_1$.
Clearly  $\hbox{Der}_\omega^+( \frakL^\Q)$ is a Lie subalgebra, 
and  it turns out that the isomorphism \eqref{eq:etaQ} preserves the Lie brackets.

Finally, we define the \emph{infinitesimal Dehn--Nielsen representation}
of the Torelli group by the following composition of maps:
\begin{equation} \label{eq:composition}
\xymatrix{
 \calI \ar[r]^-{\varrho^\theta}  \ar@/_2pc/@{-->}[rrr]^-{r^\theta}  
& \IAut_{\omega}\big(\hat\frakL^\Q) \ar[r]^-\log_-\simeq  
&  \hbox{Der}_\omega^+(\hat \frakL^\Q)  \ar[r]^-{(\eta^\Q)^{-1}}_-\simeq  & \hat \calT(H^\Q).
}
\end{equation}
The map $r^\theta$ is a group homomorphism if we endow the target $ \hat \calT(H^\Q)$  with the BCH product $\star$ induced by its Lie bracket:
thus, we have
\begin{eqnarray} 
\notag r^\theta(fh)  &= & r^\theta(f)  \star  r^\theta(h) \\
\notag &=&  r^\theta(f) + r^\theta(h) +\frac{1}{2}\big[ r^\theta(f) ,r^\theta(h) \big]  \\
\label{eq:BCH} && + \frac{1}{12}\big[ r^\theta(f),\big[ r^\theta(f),r^\theta(h) \big] \big]+ \frac{1}{12}\big[r^\theta(h) ,\big[r^\theta(h) , r^\theta(f)\big]\big] + \cdots
\end{eqnarray}
for any $f,h\in \calI$.
See \cite{Massuyeau}  for details about the above construction, and see  \cite{HM} for a survey.

\begin{remark}
The map $r^\theta$ can be extended to the full mapping class group by setting
\begin{equation} \label{eq:tilde_r}
\forall f \in \calM, \quad  \tilde{r}^\theta(f) := (\eta^\Q)^{-1}\log\big(\varrho^\theta(f) \circ f_*^{-1}\big),
\end{equation}
where $f_*:H^\Q \to H^\Q$ is the automorphism induced by $f$ in homology 
and  is extended to an automorphism of $\hat \frakL^\Q$   in the canonical way.   
Then the map $\tilde r^\theta:\calM \to  \hat \calT(H^\Q)$ satisfies
\begin{equation} \label{eq:star_tilde}
\forall f,h \in \calM, \quad \tilde r^\theta(fh) =   \tilde r^\theta(f)  \star \big(f_* \cdot \tilde r^\theta(h)\big)
\end{equation}
where $f_*$ acts on $\tilde r^\theta(h)$ by transforming the colors of all its leaves. \hfill $\blacksquare$
\end{remark}

\subsection{Truncations of the infinitesimal Dehn--Nielsen representation} \label{subsec:truncations}

We now consider the \emph{Johnson filtration} of the mapping class group
$$
\calM \supset \underbrace{\calM[1]}_{=\calI } \supset \underbrace{\calM[2]}_{=\calK } \supset \cdots \supset \calM[k] \supset \calM[k+1] \supset \cdots
$$
where $\calM[k]$ is defined as the kernel of the composition 
$$
 \xymatrix{
 \calM \ar[r]^-\rho \ar@/_2pc/@{-->}[rr]^-{\rho_k}   & \Aut(\pi) \ar[r] & \Aut(\pi/\Gamma_{k+1}\pi).
 }
 $$
(Here, and in the sequel, we denote by $G=\Gamma_1 G \supset \Gamma_2 G \supset \cdots $ the lower central series of a group~$G$.)
Recall from \cite{Mor93a} that, for any $k\geq 1$, the restriction of $\rho_{k+1}$ to $\calM[k]$ can be turned into a map
$$
\tau_k:  \calM[k] \longrightarrow \mathsf{D}_k(H) \subset H \otimes \frakL_{k+1}
$$
which is known as the \emph{$k$-th Johnson homomorphism}.
Since the abelian group  $\mathsf{D}_k(H)$ is torsion-free, we do not loose any information by considering $\tau_k$ 
with values in the vector space $\mathsf{D}_k(H^\Q)\simeq \mathsf{D}_k(H) \otimes \Q $. 
Then it is equivalent to the degree $k$ part of the infinitesimal Dehn--Nielsen representation $r^\theta$, in the sense that
\begin{equation} \label{eq:DNJ}
(\eta_\Q)^{-1} \circ \tau_k =   r_k^\theta: \calM[k]\longrightarrow  \mathcal{T}_k(H^\Q).
\end{equation}
In fact, a stronger version of \eqref{eq:DNJ} is known: the truncation 
\begin{equation} \label{eq:k-th_Morita}
r_{[k,2k[}^\theta:  \calM[k] \longrightarrow \bigoplus_{d=k}^{2k-1} \calT_d(H^\Q)
\end{equation}
of $r^\theta$  on  $\calM[k]$  to the degrees $k,k+1,\dots, 2k-1$,
which encodes the restriction of $\rho_{2k}$ to $\calM[k]$, 
 is equivalent  to Morita's ``refinement'' $\tilde \tau_k$ of $\tau_k$  \cite{Mor93a}: 
see \cite{Massuyeau} for a precise statement and a proof.

We are also interested in larger truncations of the infinitesimal Dehn--Nielsen representation on $\calM[k]$.
Specifically, for any integer $k\geq1$,  we consider the map
$$
r_{[k,3k[}^\theta: \calM[k] \longrightarrow \bigoplus_{d=k}^{3k-1} \calT_d(H^\Q)
$$
which is not a homomorphism anymore.

\begin{lemma}  \label{lem:truncation}
For all $f,h\in \calM[k]$, we have
$$
r_{[k,3k[}^\theta(fh) = r_{[k,3k[}^\theta(f)+r_{[k,3k[}^\theta(h) 
+\frac{1}{2}\sum_{\substack{i\geq k,\, j\geq k \\ i+j<3k}} \big[ r_i^\theta(f) ,r_j^\theta(h)  \big].
$$
\end{lemma}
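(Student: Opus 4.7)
The plan is to unwind the definition of $r^\theta_{[k,3k[}$ from the full group homomorphism $r^\theta \colon \calI \to (\hat\calT(H^\Q),\star)$ and to track degrees carefully through the BCH product.

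Since $r^\theta$ is a group homomorphism into $\hat\calT(H^\Q)$ equipped with the BCH product $\star$, we have by \eqref{eq:BCH}
$$
r^\theta(fh) = r^\theta(f) + r^\theta(h) + \tfrac12 \bigl[r^\theta(f), r^\theta(h)\bigr] + (\text{iterated brackets of length} \geq 3).
$$
The first step is to observe that $f, h \in \calM[k]$ implies $r^\theta(f)$ and $r^\theta(h)$ belong to $\prod_{d \geq k} \calT_d(H^\Q)$, i.e.\ their components in degrees strictly less than $k$ vanish. This follows from \eqref{eq:DNJ} and the characterization of the Johnson filtration as the filtration detected by the truncations of $r^\theta$ (equivalently, a mapping class belongs to $\calM[k]$ exactly when its associated derivation has no components in degrees $<k$).

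The second step is a degree count. Every term in the BCH expansion beyond the linear and quadratic parts is an iterated Lie bracket involving at least three occurrences of $r^\theta(f)$ or $r^\theta(h)$. Since each such occurrence contributes degree $\geq k$ and the Lie bracket on $\hat\calT(H^\Q)$ is additive in degree, every such term lies in $\prod_{d \geq 3k} \calT_d(H^\Q)$ and therefore vanishes under the truncation to degrees $d < 3k$. Thus
$$
r^\theta_{[k,3k[}(fh) = \Bigl(r^\theta(f) + r^\theta(h) + \tfrac12 \bigl[r^\theta(f), r^\theta(h)\bigr]\Bigr)_{[k,3k[}.
$$

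The third step is to expand the bracket $[r^\theta(f), r^\theta(h)] = \sum_{i,j \geq k} [r_i^\theta(f), r_j^\theta(h)]$ and to retain only the summands of total degree $i+j < 3k$. Combined with the trivial truncations of $r^\theta(f)$ and $r^\theta(h)$ themselves (which simply give $r^\theta_{[k,3k[}(f)$ and $r^\theta_{[k,3k[}(h)$), this yields the announced formula.

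There is no real obstacle; the only thing to be careful about is confirming that the BCH product is well defined on $\hat\calT(H^\Q)$ and that its iterated-bracket terms of length $\geq 3$ really contribute only to degrees $\geq 3k$ when evaluated on elements of $\prod_{d\geq k}\calT_d(H^\Q)$, which is immediate from the additivity of degree under the Lie bracket.
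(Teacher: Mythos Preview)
Your proposal is correct and follows essentially the same approach as the paper: use the BCH expansion \eqref{eq:BCH}, observe that $r^\theta(f)$ and $r^\theta(h)$ start in degree $k$ since $f,h\in\calM[k]$, and conclude that all iterated brackets of length $\geq 3$ land in degree $\geq 3k$ and hence vanish upon truncation. The paper's proof is simply a terser version of what you wrote.
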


\begin{proof}
Since $r^\theta(f)$ and $r^\theta(h)$ start in degree $k$, we deduce from \eqref{eq:BCH} that 
$$
r^\theta(fh) = r^\theta(f) +  r^\theta(h) + \frac 1 2 \big[ r^\theta(f) , r^\theta(h)] + (\deg \geq 3k)
$$
and the conclusion follows.
\end{proof}

By specializing Lemma \ref{lem:truncation} to $k=2$, we obtain for any $f,h\in \calK$
$$
r^\theta_{[2,5]}(fh)= r^\theta_{[2,5]}(f)+r^\theta_{[2,5]}(h)+\frac 1 2 \big[ r^\theta_2(f), r^\theta_{2}(h)\big] + 
\frac 1 2 \big[ r^\theta_2(f), r^\theta_{3}(h)\big] + \frac 1 2 \big[ r^\theta_3(f), r^\theta_{2}(h)\big]
$$
and, in particular,
\begin{equation}
\label{eq:trunc}
r^\theta_{4}(fh)= r^\theta_{4}(f)+r^\theta_{4}(h) +\frac 1 2 \big[( \eta^\Q)^{-1}\tau_2(f) ,( \eta^\Q)^{-1} \tau_2(h) \big].
\end{equation}

\begin{remark}
In the sequel, we will not mention anymore the isomorphism $\eta^\Q$ when composed with a Johnson homomorphism.
Said differently,  the values of  the Johnson homomorphisms are considered either as derivations or
$\Q$-linear combinations of tree diagrams, depending on the context. For instance, 
equation \eqref{eq:trunc} simply  writes $r^\theta_{4}(fh)= r^\theta_{4}(f)+r^\theta_{4}(h) +\frac 1 2 \big[\tau_2(f) , \tau_2(h) \big]$
with this convention. \hfill $\blacksquare$
\end{remark}

\subsection{The quadratic map $R$ in the bordered case}

Let $\theta$ be a symplectic logansion of $\pi$. We consider the map 
\begin{equation} \label{eq:R}
R^\theta: \calK \longrightarrow \frac{\calT_4(H^\Q)}{\calT_4(H)}, \ f \longmapsto \big(r_4^\theta(f) \!\!\mod 1 \big),
\end{equation}
and we simply denote it by $R$ in the sequel.
Here,  $\calT_4(H)$ is viewed  as a lattice in $\calT_4(H^\Q)$ (since it is torsion-free as a consequence of \eqref{eq:LCST1}),
and we refer to the congruence relation modulo $\calT_4(H)$  in $ \calT_4(H^\Q)$   as the congruence \emph{modulo $1$}.

\begin{lemma} \label{lem:R_map}
The map $R$ induces a map $R_{\operatorname{ab}}$ on the abelianization $\calK_{\operatorname{ab}} =\calK/[\calK,\calK]$, which is polynomial of degree $2$.
\end{lemma}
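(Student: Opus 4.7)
The plan is to exploit the cocycle formula \eqref{eq:trunc}, reduced modulo $\calT_4(H)$: for all $f, h \in \calK$,
\begin{equation*}
R(fh) \equiv R(f) + R(h) + \frac{1}{2}\big[\tau_2(f), \tau_2(h)\big] \pmod{1}.
\end{equation*}
The right-hand defect is bilinear in $(\tau_2(f), \tau_2(h))$. Two structural features will carry the argument: the tree Lie bracket on $\calT(H^\Q)$ is alternating (so $[x, x] = 0$, inherited from the antisymmetry of $\omega$), and $R(1) = 0$.

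To show that $R$ descends to $\calK_{\operatorname{ab}}$, it suffices to verify $R(fc) = R(f)$ whenever $c \in [\calK, \calK]$. Since $\tau_2$ is a homomorphism to the abelian group $\mathsf{D}_2(H)$, it kills $[\calK, \calK]$, so the bilinear defect drops out of the cocycle and the problem reduces to $R(c) = 0$ for $c \in [\calK, \calK]$. An induction on commutator length further reduces this to a single commutator $c = [g, h]$. From $R(1) = 0$ and $[x, x] = 0$ one immediately reads off $R(g^{-1}) = -R(g)$; a step-by-step expansion of $R(ghg^{-1}h^{-1})$ kills every quadratic correction with a repeated argument, leaving
$R([g, h]) \equiv [\tau_2(g), \tau_2(h)] \pmod{1}$.
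The latter vanishes in $\calT_4(H^\Q)/\calT_4(H)$ because the bracket of two $\tau_2$-values on the Johnson kernel falls back into the integer lattice $\calT_4(H)$.

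The polynomial-degree-$2$ property is then essentially formal. Writing $\calK_{\operatorname{ab}}$ additively and letting $\overline{\tau}_2 \colon \calK_{\operatorname{ab}} \to \mathsf{D}_2(H)$ be the induced homomorphism, the cocycle descends to
\begin{equation*}
R_{\operatorname{ab}}(x + y) - R_{\operatorname{ab}}(x) - R_{\operatorname{ab}}(y) = \frac{1}{2}\big[\overline{\tau}_2(x), \overline{\tau}_2(y)\big] \pmod{1},
\end{equation*}
whose right-hand side is $\Z$-bilinear in $(x, y)$. Iterating the difference operator $\Delta_a P(x) = P(x + a) - P(x)$, one finds $\Delta_a R_{\operatorname{ab}}$ affine in $x$, $\Delta_b \Delta_a R_{\operatorname{ab}}$ constant in $x$, and $\Delta_c \Delta_b \Delta_a R_{\operatorname{ab}} = 0$, which is the defining property of a polynomial map of degree~$\leq 2$.

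The most delicate step is the integrality assertion $[\tau_2(g), \tau_2(h)] \in \calT_4(H)$ used to kill $R([g, h])$: since $\mathsf{D}_4(H)/\calT_4(H) \cong \frakL_3 \otimes \Z_2$ by \eqref{eq:LCST1}, this is not automatic from the mere fact that the derivation bracket preserves $\mathsf{D}(H)$, and will need an argument specific to the image of $\tau_2$ on the Johnson kernel (for instance, via Morita's trace obstruction or a direct check on separating Dehn twists).
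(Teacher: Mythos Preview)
Your overall structure matches the paper's exactly: reduce to $R([g,h]) = [\tau_2(g),\tau_2(h)]$ via the cocycle \eqref{eq:trunc}, then show this bracket lies in $\calT_4(H)$, then read off the degree-$2$ polynomial property from the bilinearity of the defect. The computation $R([g,h]) \equiv [\tau_2(g),\tau_2(h)]$ is correct, and the polynomial part at the end is fine.

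The gap is precisely the one you flag yourself: you do not actually prove the integrality $[\tau_2(g),\tau_2(h)] \in \calT_4(H)$. This is the heart of the lemma, and neither of your suggested routes is quite right. Morita's trace is an obstruction on $\calT_3$ and does not bear on this degree-$4$ question. The ``direct check on separating twists'' is closer in spirit, but you would still have to identify what $\tau_2$ of a separating twist looks like (a half of a symmetric H-tree $\frac12\, u\!\trait\! u$) and then verify that brackets of such elements, and of their integer-linear combinations, land in $\calT_4(H)$ --- which is essentially the paper's argument done through a narrower door.

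The paper instead proves the stronger, purely algebraic inclusion
\[
\big[\mathsf{D}_2(H),\mathsf{D}_2(H)\big] \subset \calT_4(H),
\]
requiring no information about the image of $\tau_2$. The idea is to choose a symplectic basis and split the bracket as $[P,Q]=P\triangleright Q - Q\triangleright P$, where $\triangleright$ is the ``half-contraction'' defined by a Lagrangian polarization $\ell$ of $\omega$. Using \eqref{eq:LCST1}, $\mathsf{D}_2(H)$ is generated by integer H-trees together with the elements $\frac12\, u\!\trait\! u$; one then checks four cases showing $\mathsf{D}_2(H)\triangleright\mathsf{D}_2(H)\subset\calT_4(H)$, the point being that the $\frac12$'s are compensated by the $2$-fold symmetry of $u\!\trait\! u$ under $\triangleright$. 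So your sketch becomes a proof once you replace the last paragraph with this $\triangleright$-decomposition argument; nothing specific to $\tau_2(\calK)$ is needed.
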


\begin{proof}
For any $f \in \calK$ and $h\in [\calK,\calK]$, we obtain from \eqref{eq:trunc} and the nullity of $\tau_2$ on $[\calK,\calK]$ that
$r_4^\theta(fh) = r_4^\theta(f) + r_4^\theta(h)$. Hence, to deduce that $R(f)=R(fh)$, 
it suffices to check that $R(h)=0$ and we can assume without loss of generality that $h$ is a single commutator:
$h=[h',h'']$ with $h',h''\in \calK$. A straightforward computation, still based on  \eqref{eq:trunc}, gives
$$
r_4^\theta(h)= r_4^\theta([h',h'']) =[\tau_2(h'),\tau_2(h'') ] \in \calT_4(H^\Q).
$$
Since we have $\tau_2(\calK) \subset \mathsf{D}_2(H)$,  it suffices to prove the following inclusion
in   $\calT_4(H^\Q)\simeq \mathsf{D}_4(H) \otimes \Q$:
\begin{equation} \label{eq:DD}
 \left[\mathsf{D}_2(H), \mathsf{D}_2(H) \right] \subset \calT_4(H).
\end{equation}
On this purpose, we  decompose the Lie bracket of $\calT(H^\Q)$ as follows.
Choose a symplectic basis $(a_1,\dots,a_g,b_1,\dots,b_g)$ of $H$, 
and let  $\ell: H^\Q \times H^\Q \to H^\Q$ be the bilinear map defined by
$$
\ell(a_i,b_j):=\delta_{ij}, \quad \ell(b_j,a_i):= 0, \quad \ell(a_i,a_j):=0 , \quad  \ell(b_i,b_j):=0.
$$
Given any two trees $P,Q\in \calT(H^\Q)$, we set 
$$
P \triangleright Q:= 
\begin{array}{c} (\hbox{\small sum of all ways of  $\ell$-connecting \emph{one} leaf of $P$ to \emph{one} leaf of $Q$})\end{array}
$$
and we extend this to a bilinear map $\triangleright: \calT(H^\Q) \times \calT(H^\Q) \to \calT(H^\Q)$.
Then, we have $[P,Q]= P\triangleright Q-Q\triangleright P$ since $\omega(x,y) = \ell(x,y)-\ell(y,x)$ for any $x,y\in H^\Q$.
Thus, \eqref{eq:DD} will follow from the following inclusion
in   $\calT_4(H^\Q)\simeq \mathsf{D}_4(H) \otimes \Q$:
\begin{equation} \label{eq:DDD}
 \mathsf{D}_2(H) \triangleright \mathsf{D}_2(H)  \subset \calT_4(H).
\end{equation}
It follows from \eqref{eq:LCST1} that  $\mathsf{D}_2(H)$ is  generated by the following elements:
$$
\hn{a}{b}{c}{d} \ \hbox{with } a,b,c,d\in H, \qquad \frac{1}{2} \hn{u}{v}{u}{v} \ \hbox{with } u,v\in H.
$$
Hence, to prove \eqref{eq:DDD}, it suffices to verify the following:
\begin{enumerate}
\item[(1)] $\hn{a}{b}{c}{d} \triangleright \hn{a'}{b'}{c'}{d'} \in \calT_4(H)$, for any $a,b,c,d,a',b',c',d'\in H$;
\item[(2)] $\hn{a}{b}{c}{d} \triangleright \hn{u}{v}{u}{v} \in 2\calT_4(H)$, for any $a,b,c,d,u,v\in H$;
\item[(2')] $ \hn{u}{v}{u}{v} \triangleright \hn{a}{b}{c}{d} \in 2\calT_4(H)$, for any $a,b,c,d,u,v\in H$;
\item[(3)]  $ \hn{u}{v}{u}{v} \triangleright \hn{u'}{v'}{u'}{v'} \in 4\calT_4(H)$, for any $u,v,u',v'\in H$.
\end{enumerate}
(1) is obvious. (2) (resp$.$ (2')) follows from the fact that each term resulting from the $\triangleright$ operation here is repeated due to the symmetry
of the right-hand (resp$.$ left-hand) tree diagram.  (3) is verified in a similar way.

Thus we have shown that $R$ factorizes to a map $R_{\operatorname{ab}}: \calK_{\operatorname{ab}}  \to \calT(H^\Q)/\calT(H)$ verifying
$$
R_{\operatorname{ab}}(\{f\} + \{h\}) =   R_{\operatorname{ab}}(\{f\}) +  R_{\operatorname{ab}}(\{h\}) + \frac 1 2 \big[\overline{\tau_2}(f) , \overline{\tau_2}(h) \big]
$$
where $\overline{\tau_2}:  \calK_{\operatorname{ab}}  \to  \mathsf{D}_2(H)$ is the group homomorphism induced by $\tau_2$.
The bilinearity of the Lie bracket in $\calT(H^\Q)$ implies that $R_{\operatorname{ab}}$ is a polynomial map of degree $2$ on the group $ \calK_{\operatorname{ab}} $.
\end{proof}

\begin{remark} \label{rem:R_0}
The map $R_{\operatorname{ab}}$ is \emph{not} a group homomorphism,
 i.e$.$ it is \emph{not} of degree $1$ as a polynomial map on the group $ \calK_{\operatorname{ab}}$.
Yet, using the operation $ \triangleright$ introduced in the proof of Lemma~\ref{lem:R_map},
we can instead of $R$ consider the map $R_\circ$ defined by
\begin{equation} \label{eq:R_0}
R_\circ (f) :=  \Big( r_4^\theta(f)-\frac{1}{2} \tau_2(f) \triangleright  \tau_2(f) \!\! \mod 1 \Big) \in \frac{\calT_4(H^\Q)}{\calT_4(H)},
\end{equation}
and  deduce from \eqref{eq:DDD} that $R_\circ$ is a group homomorphism on $\calK$. 
(Note that $R_\circ$ depends on the choice  of both a symplectic logansion of $\pi$ and a symplectic basis of $H$.)

When $\theta$ is the symplectic logansion defined by the LMO functor \cite[\S 5.2]{Massuyeau},
the homomorphism $R_\circ$  is equal to  the degree $4$ part of the ``mod $1$ tree reduction'' of $\log_\sqcup \widetilde{Z}^Y$, 
which is considered by Nozaki, Sato and Suzuki \cite{NSS}. 
(This equality is a consequence of \cite[Theorem 5.13]{Massuyeau}.) \hfill $\blacksquare$
\end{remark}

The next lemma shows that  the restriction of $R$ to $ \calM[4]$  is determined by 
the $4$-th Johnson homomorphism (and, so, does not depend on the choice of $\theta$).

\begin{lemma} \label{lem:R_M4}
We have the following commutative diagram
$$
\xymatrix{
\calM[4] \ar[r]^-{\tau_4} \ar[rrd]_R & \mathsf{D}_4(H) \ar@{->>}[r]^-\varpi & \frakL_3 \otimes \Z_2 \ar@{>->}[d]^-j \\
&&  \frac{\calT_4(H^\Q)}{\calT_4(H)},
}
$$
where $\varpi$ is the map given by \eqref{eq:varpi} and $j$ is  defined by
$$
j( [a,[b,c]]\otimes 1) :=  \frac 1 2 \quad
\begin{array}{c} {\labellist \small \hair 2pt
\pinlabel {$a$} [r] at 0 0
\pinlabel {$b$} [r] at 0 20
\pinlabel {$c$} [r] at 0 40
\pinlabel {$a$} [l] at 70 40
\pinlabel {$b$} [l] at 70 20
\pinlabel {$c$} [l] at 70 0
\endlabellist}
\includegraphics[scale=0.6]{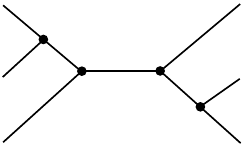} \end{array}.
$$
\end{lemma}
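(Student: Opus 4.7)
The plan is to factor the restriction $R|_{\calM[4]}$ through the exact sequence \eqref{eq:LCST1} at $k=2$, using the defining identity \eqref{eq:varpi} of $\varpi$. I would carry out the argument in three steps.

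First, for any $f \in \calM[4]$ all correction terms in the BCH formula \eqref{eq:BCH} have degree $\geq 8$; combining this observation with the identity \eqref{eq:DNJ} applied at $k=4$ yields
$$r_4^\theta(f) = (\eta^\Q)^{-1}\!\big(\tau_4(f)\big) \in \calT_4(H^\Q).$$
In particular $R(f) = (\eta^\Q)^{-1}(\tau_4(f)) \bmod \calT_4(H)$, so the lemma reduces to a purely linear-algebraic statement about the target of $\tau_4$.

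Second, I would define $\bar{j}:\mathsf{D}_4(H) \to \calT_4(H^\Q)/\calT_4(H)$ by $\bar{j}(x):=(\eta^\Q)^{-1}(x) \bmod \calT_4(H)$. By the exact sequence \eqref{eq:LCST1}, the kernel of $\bar{j}$ contains $\eta(\calT_4(H)) = \ker(\varpi)$, so $\bar{j}$ descends to a unique homomorphism $\widetilde{j} : \frakL_3 \otimes \Z_2 \to \calT_4(H^\Q)/\calT_4(H)$ with $\widetilde{j} \circ \varpi = \bar{j}$. Combined with the first step, this produces the identity $R(f) = \widetilde{j}\big(\varpi(\tau_4(f))\big)$ for every $f \in \calM[4]$, which is precisely the commutativity claimed in the lemma \emph{modulo} identifying $\widetilde{j}$ with $j$.

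Third, I would verify $\widetilde{j} = j$ on generators. Since $\frakL_3$ is spanned as an abelian group by elements of the form $[a,[b,c]]$ with $a,b,c\in H$ (any length-$3$ bracket reduces to this shape by antisymmetry), it suffices to compute $\widetilde{j}([a,[b,c]]\otimes 1)$. Let $u$ be the rooted binary tree colored by $a,b,c$ with $\operatorname{brack}(u) = [a,[b,c]]$. Then \eqref{eq:varpi} gives $\varpi\!\big(\tfrac12 \eta(u \trait u)\big) = [a,[b,c]]\otimes 1$, so
$$\widetilde{j}\big([a,[b,c]]\otimes 1\big) = \tfrac{1}{2}(u \trait u) \bmod \calT_4(H).$$
A direct picture-level check shows that $u \trait u$ is exactly the $6$-leaf tree displayed in the statement, hence $\widetilde j = j$; as a by-product, the formula in the lemma automatically defines a homomorphism on $\frakL_3 \otimes \Z_2$.

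The argument is essentially formal, driven by the exactness of \eqref{eq:LCST1} and the normalization \eqref{eq:varpi}. The only place where genuine care is required is Step 3: the cyclic orderings at the two trivalent vertices of $u \trait u$ must be handled correctly under the AS relation so as to match the drawing in the statement. I expect this drawing-level identification to be the main (and essentially the only) delicate point.
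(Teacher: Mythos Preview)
Your argument is correct and matches the paper's own proof: both reduce $R|_{\calM[4]}$ to $\tau_4$ via \eqref{eq:DNJ}, then factor through $\mathsf{D}_4(H)/\eta(\calT_4(H)) \simeq \frakL_3\otimes\Z_2$ using \eqref{eq:LCST1}, and finally read off the explicit formula for $j$ from the normalization \eqref{eq:varpi}. The only point you leave implicit is the injectivity of $j$ (the tail arrow in the diagram), which follows at once if in Step~2 you note that $\ker\bar j$ is \emph{equal} to $\eta(\calT_4(H))$, not merely contained in it, since $\eta$ is injective on $\calT_4(H)$ by \eqref{eq:LCST1}.
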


\begin{proof}
The isomorphism $\eta^\Q: \calT_4(H^\Q) \to \mathsf{D}_4(H^\Q)$ induces an isomorphism
between $\calT_4(H^\Q)/\calT_4(H)$ and  $\mathsf{D}_4(H^\Q)/\eta(\calT_4(H))$. 
The latter contains $\mathsf{D}_4(H)/\eta(\calT_4(H))$ which, according to \eqref{eq:LCST1}, 
is isomorphic to $\frakL_3 \otimes \Z_2$.
Then, we deduce from \eqref{eq:R} and \eqref{eq:DNJ} that the restriction of $R$ to $\calM[4]$ corresponds (through $\eta^\Q$)
to the composition 
$$
\xymatrix{
\calM[4] \ar[r]^-{\tau_4} & \mathsf{D}_4(H)  \ar@{->>}[r] & \frac{\mathsf{D}_4(H)}{\eta(\calT_4(H))}.
}
$$
Then we conclude thanks to the definition \eqref{eq:varpi} of $\varpi$.
\end{proof}

\subsection{The quadratic map $R$ in the closed case}

We now consider   the    closed surface $\clo{\Sigma}$, which is obtained from $\Sigma$ by gluing a $2$-disk.
All the previous constructions of this section for $\Sigma$ can be performed for $\clo{\Sigma}$, but with extra technical difficulties which we outline. 

First of all, we need to consider the subgroup $I$ of $\calT(H)$ that is generated by trees showing an \emph{$\omega$-vertex}, 
as shown below:
$$
\begin{array}{c}
\labellist \small \hair 2pt
\pinlabel {${\displaystyle := \quad \sum_{i=1}^g}$} at 98 31 
\pinlabel {$\omega$} [t] at 3 0
\pinlabel {$b_i$} [t] at 166 0 
\pinlabel {$a_i$} [t] at 202 0
\endlabellist
\centering
\includegraphics[scale=0.5]{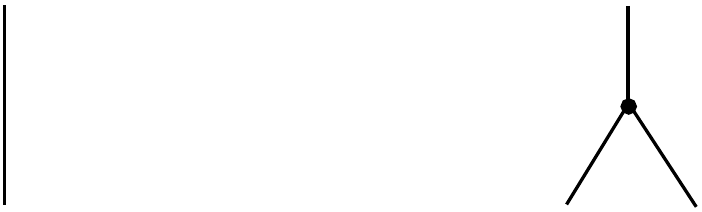}
\end{array}\\[0.2cm]
$$
It is easily deduced from the IHX relation that $I$ is an ideal of the Lie ring $\calT(H)$ (see \cite[\S 7]{HM09}), hence the quotient
$$
\clo{\calT}(H) := \frac{\calT(H)}{I}
$$
is a Lie ring. 
Besides, let $\clo{\frakL}$ be the quotient of $\frakL$ by the ideal generated by $\omega \in \frakL_2$,
 let $\clo{\mathsf{D}}_d(H)$ be the kernel of the Lie bracket map $H \otimes \clo{\frakL}_{d+1} \to \clo{\frakL}_{d+2}$ for any $d\geq 1$,
and  set 
$$
\mathsf{O} \clo{\mathsf{D}}_d(H) :=  \frac{{\clo{\mathsf{D}}}_d(H)}{(\hbox{id}_H\otimes [\cdot,\cdot])\big(\omega \otimes \clo{\frakL}_{d})}
$$
where $[\cdot,\cdot]$ denotes the Lie bracket map $H \otimes \clo{\frakL}_{d} \to \clo{\frakL}_{d+1}$.
Then, similarly to  \eqref{eq:eta}, we have  a homomorphism
\begin{equation}    \label{eq:eta_bis}
\eta: \clo{\calT}_d(H) \longrightarrow \mathsf{O} \clo{\mathsf{D}}_d(H)
\end{equation}
which, just as \eqref{eq:eta}, induces an isomorphism $\eta^\Q$ for rational coefficients.

In the case of the closed surface $\clo{\Sigma}$, the \emph{infinitesimal Dehn--Nielsen representation} of the Torelli group  \cite{HM}
is the composition 
\begin{equation} \label{eq:DN_closed}
\xymatrix{
 \clo{\calI} \ar[r]^-{\varrho^\theta}  \ar@/_2pc/@{-->}[rrr]^-{r^\theta}  
& \hbox{IOut}\big(\clo{\frakL}^\Q\big) \ar[r]^-\log_-\simeq  
&  \hbox{ODer}^+\big(\clo{\frakL}^\Q\big)  \ar[r]^-{(\eta^\Q)^{-1}}_-\simeq  & \clo{\calT}(H^\Q)
}
\end{equation}
where, to simplify notations, we have omitted the $\,\widehat \cdot\,$ decoration indicating degree-completions,
$\hbox{IOut}\big(\clo{\frakL}^\Q\big)$ is the group of automorphisms of $\clo{\frakL}^\Q$  (that induce the identity at the graded level) modulo inner automorphisms,
and $ \hbox{ODer}^+\big(\clo{\frakL}^\Q\big)$ is the Lie algebra of derivations of $\clo{\frakL}^\Q$ (that strictly increase degrees) modulo inner derivations.
In the definition of \eqref{eq:DN_closed}, we use the fact that the symplectic logansion $\theta$ induces an isomorphism between the Malcev Lie algebra of $\pi_1(\clo{\Sigma},\star)$
and (the degree-completion of) $\clo{\frakL}^\Q$ \cite[Prop$.$ 2.18]{Massuyeau},
and we have identified $\mathsf{O} \clo{\mathsf{D}}(H^\Q)$ with $\hbox{ODer}^+\big(\clo{\frakL}^\Q\big)$
in the same way we have already identified ${\mathsf{D}}(H^\Q)$ with $\hbox{Der}_\omega^+(\frakL^\Q)$.
By construction, there is a commutative diagram
\begin{equation}\label{eq:rr}
\xymatrix{
\calI \ar[r]^-{r^\theta}  \ar@{->>}[d]  &  {\calT}(H^\Q) \ar@{->>}[d] \\
\clo{\calI} \ar[r]_-{r^\theta} &  \clo{\calT}(H^\Q).
}
\end{equation}

According to \cite{Mor96}, and similarly to the case of the bordered surface $\Sigma$, 
the mapping class group $\clo{\calM}$ of $\clo{\Sigma}$ has a \emph{Johnson filtration}
$$
\clo{\calM} \supset \underbrace{\clo\calM[1]}_{=\clo\calI } \supset \underbrace{\clo \calM[2]}_{=\clo\calK } \supset \cdots \supset \clo\calM[k] \supset \clo\calM[k+1] \supset \cdots
$$
and, for every integer $k\geq 1$, the \emph{$k$-th Johnson homomorphism} takes the form
$$
\tau_k:  \clo\calM[k] \longrightarrow \mathsf{O}\clo{\mathsf{D}}_k(H) 
\subset \frac{H \otimes \clo\frakL_{k+1}}{(\hbox{id}_H\otimes [\cdot,\cdot])\big(\omega \otimes \clo{\frakL}_{k})}.
$$
According to \cite[Lemma 2]{Asada}, the abelian group $\mathsf{O}\clo{\mathsf{D}}_k(H)$  is torsion-free;
hence, as in the case of $\Sigma$, there is no loss of information in tensoring $\tau_k$ with $\Q$
and the analogue of \eqref{eq:DNJ} holds true in the closed case too.
As an analogue of Morita's refinement~$\tilde \tau_k$ of $\tau_k$ in the closed case, we shall consider the homomorphism
\begin{equation} \label{eq:k-th_Morita_closed}
r_{[k,2k[}^\theta:  \clo{\calM}[k] \longrightarrow \bigoplus_{d=k}^{2k-1} \clo{\calT}_d(H^\Q).
\end{equation}

Similarly  to the case of the surface $\Sigma$, we now define by the same formula \eqref{eq:R} a map
\begin{equation} \label{eq:R_closed}
R: \clo\calK \longrightarrow \frac{\clo\calT_4(H^\Q)}{\clo\calT_4(H)}.
\end{equation}
Here ${\clo\calT_4(H^\Q)}/{\clo\calT_4(H)}$ denotes the quotient of $\clo\calT_4(H^\Q)$ 
by the image  of  the canonical group homomorphism $\clo\calT_4(H) \to \clo\calT_4(H^\Q)$:
since it is not clear whether $\clo\calT_4(H)$ is torsion-free (in contrast to the case of $\Sigma$),
this could be a slight abuse of notation.
As a consequence of \eqref{eq:rr}, we have the following commutative diagram:
\begin{equation} \label{eq:RR}
\xymatrix{
\calK \ar[r]^-{R}  \ar@{->>}[d]  & \frac{\calT_4(H^\Q)}{\calT_4(H)}\ar@{->>}[d] \\
\clo{\calK} \ar[r]_-{R} &  \frac{\clo\calT_4(H^\Q)}{\clo\calT_4(H)}
}
\end{equation}
Hence, by Lemma \ref{lem:R_map}, we obtain  a map $R_{\operatorname{ab}}$ on the abelianization $\clo{\calK}_{\operatorname{ab}} =\clo{\calK}/[\clo{\calK},\clo{\calK}]$
which is polynomial of degree $2$.

\begin{remark} \label{rem:R_0_bis}
The variation $R_\circ: \calK \to {\calT_4(H^\Q)}/{\calT_4(H)}$ of $R$ introduced in Remark \ref{rem:R_0} does \emph{not} factorize
to a group homomorphism from $\clo{\calK}$ to  ${\clo{\calT}_4(H^\Q)}/{\clo{\calT}_4(H)}$. 
To produce from $R_\circ$ a group homomorphism on  $\clo{\calK}$, one would  need to add extra relations to the quotient  ${\clo{\calT}_4(H^\Q)}/{\clo{\calT}_4(H)}$.
These extra relations seem to be missing in the definition of the map induced by $\log_\sqcup \widetilde{Z}^Y$ 
that is considered in the proof of \cite[Cor$.$~1.5]{NSS}.
(Indeed, the subspace of diagrammatic relations needed to have the LMO homomorphism $\widetilde{Z}^Y$ 
defined on ``closed''   homology    cylinders is an ideal for the ``star product'' denoted by $\star$ in \cite{CHM},
but it is not an ideal for the disjoint union operation $\sqcup$: hence $\log_\sqcup \widetilde{Z}^Y$ does not factorize to the monoid of ``closed''   homology    cylinders.) 
\hfill $\blacksquare$
\end{remark}

The next lemma shows that  the restriction of $R$ to $\clo{\calM}[4]$  is determined by 
the $4$-th Johnson homomorphism (and, so, does not depend on the choice of $\theta$).

\begin{lemma} \label{lem:R_M4_closed}
There is a non-trivial $2$-torsion abelian group $L$ that fits into a commutative diagram of the following form:
$$
\xymatrix{
\clo\calM[4] \ar[r]^-{\tau_4} \ar[rrd]_R & \mathsf{O}\clo{\mathsf{D}}_4(H)  \ar@{->>}[r]^-\varpi
 & L  \ar@{>->}[d]^-j \\
&&  \frac{\clo{\calT}_4(H^\Q)}{\clo{\calT}_4(H)}.
}
$$
\end{lemma}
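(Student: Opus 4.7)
The plan is to mirror the proof of Lemma~\ref{lem:R_M4}, by setting
\[ L := \frac{\mathsf{O}\clo{\mathsf{D}}_4(H)}{\eta\bigl(\clo{\calT}_4(H)\bigr)},\]
with $\eta$ the map \eqref{eq:eta_bis} at $d=4$; take $\varpi$ to be the canonical projection, and $j$ to be the map induced by the rational isomorphism $(\eta^\Q)^{-1}$. Commutativity of the outer triangle will follow from the closed-surface analogue of \eqref{eq:DNJ} mentioned just before \eqref{eq:k-th_Morita_closed}, combined with the definition \eqref{eq:R_closed} of $R$ as a reduction modulo~$1$. Well-definedness of $j$ is automatic; its injectivity follows from the torsion-freeness of $\mathsf{O}\clo{\mathsf{D}}_4(H)$ (by \cite[Lemma 2]{Asada}) together with the fact that $\eta^\Q$ is an isomorphism.

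The $2$-torsion of $L$ is obtained by comparison with the bordered case. The canonical projections $\calT_4(H) \twoheadrightarrow \clo{\calT}_4(H)$ and $\mathsf{D}_4(H) \twoheadrightarrow \mathsf{O}\clo{\mathsf{D}}_4(H)$ intertwine the two versions of $\eta$, so they induce on cokernels a surjection $\frakL_3 \otimes \Z_2 \twoheadrightarrow L$ via the exact sequence \eqref{eq:LCST1}. Since the source is $2$-torsion, so is $L$.

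The main obstacle is to prove that $L$ is non-trivial. Writing $K$ for the kernel of $\mathsf{D}_4(H) \twoheadrightarrow \mathsf{O}\clo{\mathsf{D}}_4(H)$, a short diagram chase identifies $L$ with the quotient $(\frakL_3 \otimes \Z_2)/\varpi(K)$. I plan to analyse $\varpi(K)$ by unwinding the construction of $\mathsf{O}\clo{\mathsf{D}}_4(H)$: elements of $K$ arise from the Lie ideal generated by $\omega$ in $\frakL_5$ and from the further quotient by $(\mathrm{id}_H\otimes[\cdot,\cdot])(\omega\otimes \clo{\frakL}_4)$, and through the description \eqref{eq:varpi} of $\varpi$ their images in $\frakL_3\otimes\Z_2$ should lie in the $\Z_2$-span of the elements $[h,\omega]$ for $h\in H$. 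This would realise $L$ as a quotient of $\clo{\frakL}_3\otimes\Z_2$.

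For $g\geq 3$, a rank count via Witt's formula shows that $[H,\omega]$ is a proper subgroup of $\frakL_3$; exhibiting, for instance, the triple bracket $[a_1,[a_2,a_3]]$ as an element of $\frakL_3\otimes\Z_2$ that does not belong to the $\Z_2$-span of the $[h,\omega]$ would then produce a non-trivial element of $L$. This final rank/reduction check is the only genuinely non-routine step in the plan, and it is the step I expect to be the main obstacle.
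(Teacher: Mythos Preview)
Your definition of $L$, $\varpi$, $j$, the commutativity of the diagram, the injectivity of $j$, and the $2$-torsion argument all match the paper's approach. The surjectivity of $\mathsf{D}_4(H)\twoheadrightarrow\mathsf{O}\clo{\mathsf{D}}_4(H)$ that you implicitly use for the ``surjection on cokernels'' is not quite automatic; the paper obtains it from the snake lemma applied to the two bracket sequences for $\frakL$ and $\clo{\frakL}$, but this is a minor point.

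The genuine gap is in your plan for non-triviality. You propose to show that $\varpi(K)$ lies in the $\Z_2$-span of the $[h,\omega]$, i.e.\ that $\varpi\big((H\otimes\langle\langle\omega\rangle\rangle_5)\cap\mathsf{D}_4(H)\big)\subset[\omega,H]\otimes\Z_2$. This is precisely the inclusion that the paper does \emph{not} prove: in Remark~\ref{rem:about_L} it is recorded only as an expectation (equivalent to the injectivity of the map $\clo{j}$, and hence to $L\simeq\clo{\frakL}_3\otimes\Z_2$). Unwinding the description \eqref{eq:varpi} on elements of $(H\otimes\langle\langle\omega\rangle\rangle_5)\cap\mathsf{D}_4(H)$ is not straightforward, because such elements need not be of the simple form $\tfrac12\eta(u\!\trait\!u)$ with $\omega$ visibly appearing in $u$; one has to control all ways of writing them in a basis of $\mathsf{D}_4(H)$, and the paper gives no indication that this can be done directly.

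The paper sidesteps the problem entirely. Instead of computing $\varpi(K)$, it projects along $H\to A:=H/\langle b_1,\dots,b_g\rangle$. Since $\omega\mapsto 0$ in $\frakL_2(A)$, the whole kernel $K\subset H\otimes\langle\langle\omega\rangle\rangle_5$ dies under $H\otimes\frakL_5(H)\to A\otimes\frakL_5(A)$. Hence the induced map
\[
m:\ \frakL_3(H)\otimes\Z_2\ \simeq\ \frac{\mathsf{D}_4(H)}{\eta(\calT_4(H))}\ \longrightarrow\ \frac{\mathsf{D}_4(A)}{\eta(\calT_4(A))}\ \simeq\ \frakL_3(A)\otimes\Z_2
\]
factors through $L$. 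As $m$ is just the canonical projection $\frakL_3(H)\otimes\Z_2\to\frakL_3(A)\otimes\Z_2$, it is non-zero (your own test element $[a_1,[a_2,a_3]]$ survives), and therefore $L\neq 0$. This trick gives non-triviality without any control over $\varpi(K)$, and is the key idea your proposal is missing.
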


\begin{proof}
Set
$$
L:= \frac{\mathsf{O}\clo{\mathsf{D}}_4(H)}{ \eta\big(\clo{\calT}_4(H)\big)}
$$
and let $\varpi:{\mathsf{O}\clo{\mathsf{D}}_4(H)} \to L$ be the canonical projection.
Like in the bordered case, we obtain that the restriction of $R$ to $\clo\calM[4]$ is equivalent  to the composition
\begin{equation} \label{eq:ccc}
\xymatrix{
\clo\calM[4] \ar[r]^-{\tau_4} & \mathsf{O}\clo{\mathsf{D}}_4(H)  \ar@{->>}[r]^-\varpi & L 
}
\end{equation}
 through the  isomorphism 
$$
\eta^\Q :  \frac{\clo{\calT}_4(H^\Q)}{\clo{\calT}_4(H)} \stackrel{\simeq}{\longrightarrow}
 \frac{\mathsf{O}\clo{\mathsf{D}}_4(H^\Q)}{ \eta\big(\clo{\calT}_4(H)\big)}.
$$
Let $p:  \frakL \to \clo{\frakL}$ be the canonical projection, whose kernel is the ideal $ \langle\langle \omega \rangle\rangle$
generated by $\omega$. The map $H \otimes \langle\langle \omega \rangle\rangle_5 \to \langle\langle \omega \rangle\rangle_6$
defined by the Lie bracket is surjective. Hence an application of the snake lemma  to the  commutative diagram
$$
\xymatrix{
0 \ar[r] & \mathsf{D}_4(H)  \ar@{-->}[d]^-q \ar[r]  & H \otimes \frakL_5 \ar[r]  \ar[d]^-{H \otimes p_5} & \frakL_6 \ar[r] \ar[d]^-{p_6} & 0 \\ 
0 \ar[r] & \clo{\mathsf{D}}_4(H) \ar[r]  & H \otimes \clo{\frakL}_5 \ar[r] & \clo{\frakL}_6 \ar[r] & 0
}
$$
shows that the induced map $q:\mathsf{D}_4(H) \to \clo{\mathsf{D}}_4(H)$ is surjective, 
with kernel $\big(H \otimes \langle\langle \omega \rangle\rangle_5\big) \cap \mathsf{D}_4(H)$.
Hence we deduce that $q$ induces an isomorphism from
$$
 \frac{\mathsf{D_4}(H)  }{\eta(\calT_4(H))+ 
 \big(H \otimes \langle\langle \omega \rangle\rangle_5+ (\operatorname{id}\otimes [\cdot,\cdot])(\omega \otimes {\frakL}_{4})\big)\cap \mathsf{D_4}(H)}
$$
to $L$. But, by considering tree diagrams in $\calT_4(H)$ with $\omega$-vertices, we obtain the following identity of subgroups of $H \otimes \frakL_5$:
 $$
 \eta(\calT_4(H)) + \big(H \otimes \langle\langle \omega \rangle\rangle_5+ 
 (\operatorname{id}\otimes [\cdot,\cdot])(\omega \otimes {\frakL}_{4})\big)\cap \mathsf{D_4}(H) = 
 \eta(\calT_4(H)) + \big(H \otimes \langle\langle \omega \rangle\rangle_5\big) \cap \mathsf{D_4}(H).
 $$
Hence $L$ is  canonically isomorphic to 
$$
L':= \frac{\mathsf{D_4}(H)  }{ \eta(\calT_4(H)) +\big(H \otimes \langle\langle \omega \rangle\rangle_5\big) \cap \mathsf{D_4}(H) }
\qquad \Bigg( \simeq \frac{\clo{\mathsf{D}}_4(H)  }{q \eta(\calT_4(H)) } \Bigg)
$$

Being a quotient of $\mathsf{D_4}(H)/ \eta(\calT_4(H)) \stackrel{\eqref{eq:LCST1}}{\simeq} \frakL_3 \otimes \Z_2$, 
the abelian group $ L'$ is $2$-torsion. Let $A:= H/ \langle b_1,\dots,b_g\rangle$:
using  Remark \ref{rem:any}, we see that the canonical projection $H \to A$ induces a map 
$m:\mathsf{D_4}(H)/ \eta(\calT_4(H)) \to \mathsf{D_4}(A)/ \eta(\calT_4(A))$. According to \eqref{eq:LCST1},
$m$ is essentially the canonical map $ \frakL_3(H) \otimes \Z_2 \to  \frakL_3(A) \otimes \Z_2 $, so that it is not zero.
Thus, to conclude that $ L'$ is not trivial, it suffices to observe that the homomorphism $m$ factorizes through $L'$.
\end{proof}

\begin{remark} \label{rem:about_L}
It is likely that $L\simeq \clo{\frakL}_3 \otimes \Z_2$.
Indeed, the composition of homomorphisms
$$
\xymatrix{
\frakL_3 \otimes \Z_2  \ar[r]^-j &
 \frac{\calT_4(H^\Q)}{\calT_4(H)} \ar@{->>}[r] &  \frac{\clo{\calT}_4(H^\Q)}{\clo{\calT}_4(H)},
}
$$
where $j$ is the map of Lemma  \ref{lem:R_M4}, factorizes through the quotient $\clo{\frakL}_3 \otimes \Z_2 $ of $\frakL_3\otimes \Z_2 $.
Hence we have a homomorphism $\clo{j}: \clo{\frakL}_3 \otimes \Z_2  \to {\clo{\calT}_4(H^\Q)}/{\clo{\calT}_4(H)}$ such that
\begin{equation} \label{eq:parallelogramme}
\xymatrix{
& \clo{\frakL}_3 \otimes \Z_2 \ar[rr]^-{\clo{j}} \ar@{-->>}[rd]_-\exists   \ar@{->>}[ld] && \frac{\clo{\calT}_4(H^\Q)}{\clo{\calT}_4(H)}\\
 {\frakL}_3(A)  \otimes \Z_2   & & \ar[ll] L\ar@{>->}[ru]_-j&
}
\end{equation}
where the bottom map is given by the last paragraph of the proof of Lemma \ref{lem:R_M4_closed}, and the leftmost map is the canonical projection.
It is expected that  $\clo{j}$ is injective, which is equivalent to the identity
$\varpi\big((H \otimes \langle\langle \omega \rangle\rangle_5) \cap \mathsf{D_4}(H) \big)= [\omega, H] \otimes \Z_2$.\hfill $\blacksquare$
\end{remark}

%
%
%x
\section{On the rational abelianization of the Johnson kernel} \label{sec:rational}

In this section,  we  review the computation of the rational abelianization of the Johnson kernel
 by Dimca, Hain and Papadima \cite{DHP},
in the   more     explicit form  given by Morita, Sakasai and Suzuki \cite{MSS}.
Although these results are obtained in \cite{DHP} and \cite{MSS} only for $\clo{\calK}_{\operatorname{ab}} \otimes \Q$,
we shall see that they can  be proved for  ${\calK}_{\operatorname{ab}} \otimes \Q$ too.
Furthermore, we make the  computation very  explicit by using the infinitesimal Dehn--Nielsen representation.

\subsection{The closed case}

In his study of the relationship between the Casson invariant and the structure of the Torelli group \cite{Mor89,Mor91},
Morita introduced two fundamental homomorphisms
$$
d: \calK \longrightarrow \Z  \quad \hbox{and} \quad d': \calK \longrightarrow \Z 
$$
of a very different nature. While $d'$ is defined from $\tau_2$ (and, so, is determined by the action of $\calK$ on $\pi/\Gamma_4 \pi$),
the homomorphism $d$ involves the Casson invariant in its definition (and, as a consequence of results in \cite{Hain},
it is not determined by the action of $\calK$ on $\pi/\Gamma_k \pi$ for any \emph{fixed} $k$).
Yet, both of them are invariant under the conjugacy action of $\calM$ on $\calK$, 
and they have simple values on any   separating twist $T_\gamma$     of genus $h$:
$$
d(T_\gamma)=4h(h-1)  \quad \hbox{and} \quad d'(T_\gamma)=h(2h+1).
$$
Furthermore, Morita  proved in  \cite[Theorem 5.7]{Mor91} that the linear combination 
$$
\clo{d} := - \frac{1+2g}{12} \cdot d + \frac{g-1}{3} \cdot d' 
$$
vanishes on the kernel of the canonical map $\calK \to \clo{\calK}$:
hence there is an $\clo{\calM}$-invariant group homomorphism $\clo{d}: \clo{\calK} \to \Z$ satisfying $\clo{d}(T_\gamma) = h(g-h)$.

In the case of the closed surface $\clo{\Sigma}$, the rational abelianization of the Johnson kernel is determined by the following theorem,
which results from \cite[Th$.$ B]{DHP} combined with \cite[Th$.$ 1.4]{MSS}.

\begin{theorem}[Dimca--Hain--Papadima, Morita--Sakasai--Suzuki] \label{th:DHP-MSS}
In genus $g\geq 6$, the group homomorphism
$$
\big(\clo{d},r_{[2,4[}^\theta\big): \clo{\calK} \longrightarrow \Z \oplus \clo{\calT}_2(H^\Q)  \oplus \clo{\calT}_3(H^\Q)
$$
induces a linear embedding  of $\clo{\calK}_{\operatorname{ab}} \otimes \Q$ into  $\Q  \oplus \clo{\calT}_2(H^\Q) \oplus \clo{\calT}_3(H^\Q)$.
\end{theorem}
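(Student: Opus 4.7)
The strategy is to identify the map $(\clo{d}, r^\theta_{[2,4[})$ with a familiar map on $\clo{\calK}$, and then to invoke the results of Dimca--Hain--Papadima and Morita--Sakasai--Suzuki.

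First, one checks that the map factors through the abelianization. For $\clo{d}$, this is by construction; for $r^\theta_{[2,4[}$, equation \eqref{eq:k-th_Morita_closed} specialized to $k=2$ asserts that the restriction to $\clo{\calM}[2] = \clo{\calK}$ is already a group homomorphism.

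Second, one identifies this restriction with Morita's refinement $\widetilde{\tau}_2$ of the second Johnson homomorphism, via the isomorphisms $\eta^\Q$ between $\clo{\calT}_d(H^\Q)$ and $\mathsf{O}\clo{\mathsf{D}}_d(H^\Q)$ for $d \in \{2,3\}$. This is the closed-surface analogue of the statement made in the bordered case just after \eqref{eq:k-th_Morita}, proved in~\cite{Massuyeau}. Under this identification, our pair $(\clo{d}, r^\theta_{[2,4[})$ matches the map $(\clo{d}, \widetilde{\tau}_2)$ considered by Morita--Sakasai--Suzuki in \cite[Th$.$ 1.4]{MSS}.

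Third, one invokes the key input. Using Hain's quadratic presentation of the Malcev Lie algebra of $\clo{\calI}$ \cite{Hain}, combined with the finite-dimensionality theorem of Dimca--Papadima \cite{DP} and the explicit computation of Dimca--Hain--Papadima \cite[Th$.$ B]{DHP}, the space $\clo{\calK}_{\operatorname{ab}} \otimes \Q$ is an explicit finite-dimensional $Sp(H^\Q)$-representation. Then \cite[Th$.$ 1.4]{MSS} asserts that, for $g \geq 6$, the map $(\clo{d}, \widetilde{\tau}_2)$ induces a linear embedding of this space into the target, giving the desired conclusion.

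The main mathematical content sits in \cite{DHP, MSS}: the nontrivial work is to decompose $\clo{\calK}_{\operatorname{ab}} \otimes \Q$ into $Sp(H^\Q)$-isotypic components and to match each component with a summand of the target. The hypothesis $g \geq 6$ is inherited from this representation-theoretic step, where being in the stable range prevents unwanted collisions or degenerations of irreducible $Sp$-components. The only step requiring attention here is the translation between the diagrammatic formulation via $r^\theta_{[2,4[}$ and the classical formulation via $\widetilde{\tau}_2$, and this dictionary is provided by~\cite{Massuyeau}.
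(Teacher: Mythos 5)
The paper does not give a standalone proof of this theorem: it states it as a direct consequence of \cite[Th$.$ B]{DHP} and \cite[Th$.$ 1.4]{MSS}, remarking only that $r^\theta_{[2,4[}$ ``plays the same role as'' Morita's $\tilde\tau_2$. Your proposal is essentially this same argument, spelled out in three steps (factorization through $\clo{\calK}_{\operatorname{ab}}$, identification of $r^\theta_{[2,4[}$ with $\tilde\tau_2$ via $\eta^\Q$, and invocation of the cited theorems), so it is correct and matches the paper's approach. One small caveat: the dictionary between $r^\theta_{[2,4[}$ and $\tilde\tau_2$ is stated and proved in \cite{Massuyeau} for the bordered surface; for the closed surface the paper treats the left column of diagram \eqref{eq:rr} and the definition \eqref{eq:k-th_Morita_closed} as the appropriate analogue rather than citing a closed-surface statement verbatim, and your wording should reflect that nuance. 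Also, the restriction $g\geq 6$ ultimately traces back (via the proof machinery visible in the bordered case, Theorem~\ref{th:DHP-MSS_bis}) to the equality $(\Gamma_4\calI/\Gamma_5\calI)\otimes\Q\simeq(\calM[4]/\calM[5])\otimes\Q$ from \cite{MSS}, so it is slightly more precise than a generic ``stable range'' statement.
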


Note that the truncation $r_{[2,4[}^\theta$ of the infinitesimal Dehn--Nielsen representation
plays the same role as the second Morita homomorphism  $\tilde \tau_2$ in the statement of \cite[Th$.$ 1.4]{MSS}.

\subsection{The bordered case}

Similarly to Theorem \ref{th:DHP-MSS}, we have the following result for the bordered  surface $\Sigma$.

\begin{theorem} \label{th:DHP-MSS_bis}
In genus $g\geq 6$, the  group homomorphism
$$
\big(d,r_{[2,4[}^\theta\big): {\calK} \longrightarrow \Z \oplus {\calT}_2(H^\Q)  \oplus {\calT}_3(H^\Q)
$$
induces a linear embedding of ${\calK}_{\operatorname{ab}} \otimes \Q$ into $ \Q  \oplus {\calT}_2(H^\Q) \oplus {\calT}_3(H^\Q)$.
\end{theorem}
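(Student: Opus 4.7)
The plan is to parallel the strategy used by Dimca--Hain--Papadima and Morita--Sakasai--Suzuki in the closed case, adapting each step to the bordered surface $\Sigma$. The main inputs remain Hain's description of the Malcev Lie algebra of the Torelli group and the two Morita homomorphisms $d$ and (the refinement of) $\tau_2$. The differences with the closed case are that $\tau_1$ here takes values in the full space $\Lambda^3 H^\Q \simeq \calT_1(H^\Q)$ rather than in the quotient $(\Lambda^3 H^\Q)/H^\Q$, and that the ``Casson-type'' homomorphism on $\calK$ is $d$ in place of $\clo{d}$.

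First, I would show that $\calK_{\operatorname{ab}} \otimes \Q$ is finite-dimensional and determine it as an $\operatorname{Sp}$-module, which is the bordered analogue of the Dimca--Papadima result and of the main computation of \cite{DHP}. Hain's presentation of the Malcev Lie algebra $\mathfrak{t}(\calI)$ in the bordered case gives generators in degree~$1$ identified with $\calT_1(H^\Q) = \Lambda^3 H^\Q$ and an explicit $\operatorname{Sp}$-equivariant space of relations in degree~$2$. Applying the five-term exact sequence in Lie algebra homology to the short exact sequence $0 \to \mathfrak{t}(\calK) \to \mathfrak{t}(\calI) \to \Lambda^3 H^\Q \to 0$, and invoking complete reducibility of rational $\operatorname{Sp}$-modules, expresses $H_1(\mathfrak{t}(\calK);\Q) = \calK_{\operatorname{ab}} \otimes \Q$ as a finite direct sum of explicit isotypic components.

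Second, following \cite{MSS}, I would match this $\operatorname{Sp}$-module with an explicit subspace of $\Q \oplus \calT_2(H^\Q) \oplus \calT_3(H^\Q)$ and argue that the three components of the target of $(d,r_{[2,4[}^\theta)$ separate the isotypic pieces: the trivial factor is detected by $d$ (which here plays the role of $\clo{d}$), the components sitting in $\calT_2(H^\Q)$ are captured by $\tau_2 = r_2^\theta$, and those sitting in $\calT_3(H^\Q)$ are captured by the degree-$3$ part $r_3^\theta$, which on $\calK$ is equivalent to Morita's refinement $\tilde{\tau}_2$ by the discussion following \eqref{eq:k-th_Morita}. The hypothesis $g \geq 6$ is used to place every relevant $\operatorname{Sp}$-module in a stable range, so that its decomposition into irreducibles is uniform and the various isotypic components can be distinguished.

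The main obstacle I expect is the first step: in the bordered case, the target of $\tau_1$ carries an extra $H^\Q$-summand compared with the closed case, and one must verify that this modification propagates correctly through Hain's presentation and yields precisely the additional isotypic contribution to $\calK_{\operatorname{ab}} \otimes \Q$ that is detected by $d$ but not by $\clo{d}$. An attractive alternative would be to deduce Theorem \ref{th:DHP-MSS_bis} directly from Theorem \ref{th:DHP-MSS} by analyzing the kernel of the surjection $\calK \twoheadrightarrow \clo\calK$ through the Birman exact sequence, so that this kernel is controlled by $\pi_1(U\clo\Sigma)$ and by the boundary Dehn twist $T_{\partial\Sigma}$, together with Morita's identity $\clo{d} = -\frac{1+2g}{12}\,d + \frac{g-1}{3}\,d'$ and the fact that $d'$ is determined by $\tau_2$; this would transfer the injectivity result from the closed setting to the bordered one without reproving Hain's presentation.
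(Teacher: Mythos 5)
There is a genuine gap in your first approach, and it sits exactly at the technical heart of the DHP/MSS argument. When you apply the five-term sequence to $0 \to \mathfrak{t}(\calK) \to \mathfrak{t}(\calI) \to \Lambda^3 H^\Q \to 0$ and take the Lie algebra homology $H_1(\mathfrak{t}(\calK);\Q)$, what you compute (after formality) is the abelianization of the derived subalgebra of the Malcev Lie algebra of $\calI$. By the results of Dimca--Papadima--Suciu that the paper cites in \eqref{eq:iso2}, this is a priori only the \emph{$I$-adic completion} $\widehat{\calK_{\operatorname{ab}} \otimes \Q}$, not $\calK_{\operatorname{ab}} \otimes \Q$ itself, and there is no automatic reason these agree. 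You assert the identification ``$H_1(\mathfrak{t}(\calK);\Q) = \calK_{\operatorname{ab}} \otimes \Q$'' without comment, but establishing it is precisely the paper's claim \eqref{eq:claim} (that $\calK_{\operatorname{ab}} \otimes \Q$ is a nilpotent $\Q[\calI_{\operatorname{abf}}]$-module). Proving that claim occupies most of the paper's proof: it requires the characteristic variety $\mathcal{V}(\calI)$ to be finite (which in the bordered case forces them to invoke the finite generation of $\calK$ from Ershov--He and Church--Ershov--Putman, since the closed-case finiteness of Dimca--Papadima does not transfer), a modified Borel-density argument because $\Lambda^3 H^\Q$ is \emph{not} irreducible as an $\operatorname{Sp}(H^\Q)$-module (the extra $H^\Q$-summand that you correctly identify as new in the bordered case is what breaks the direct citation of \cite[Theorem 3.1]{DHP}), and Putman's theorem on Betti numbers of finite-index subgroups of $\calI$ containing $\calK$. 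None of these ingredients appear in your outline, and the plan as written would only constrain the completion.

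Your proposed alternative via the Birman exact sequence is a genuinely different route from the paper's, and the authors explicitly say they could \emph{not} simply ``translate'' the closed case to the bordered one, which suggests they considered and abandoned something like this. The difficulty is that abelianization is not exact: from $1 \to N \to \calK \to \clo{\calK} \to 1$ (with $N$ roughly the point-pushings along $\Gamma_2\pi_1(\clo\Sigma)$ together with the boundary twist) the five-term sequence involves $H_2(\clo{\calK};\Q)$ and the $\clo{\calK}$-coinvariants of $N_{\operatorname{ab}}$, neither of which is more accessible than the original problem, and the comparison would run in the wrong direction for an injectivity statement on $\calK_{\operatorname{ab}} \otimes \Q$. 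If you want to pursue either route, the first thing to confront is the nilpotency of $\calK_{\operatorname{ab}} \otimes \Q$ as a $\Q[\calI_{\operatorname{abf}}]$-module; once that is in hand, the reduction to degrees $2$ and $3$ and the separation by $d$, $r_2^\theta = \tau_2$, and $r_3^\theta$ proceeds essentially as you sketch (and as the paper does, using the surjectivity of $\Lambda^2\operatorname{Gr}_2\mathfrak{m}(\calI) \to \operatorname{Gr}_4\mathfrak{m}(\calI)$ from \cite{MSS} and Sakasai, and the injectivity statements from Morita's work on $\tau_2$ and $\tau_3$).
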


To prove Theorem \ref{th:DHP-MSS_bis},
we shall mainly adapt the proof of Theorem \ref{th:DHP-MSS} given  in \cite{DHP} and~\cite{MSS}.
So, as in the closed case, our arguments will require some fundamental results of  Dimca \& Papadima \cite{DP},
 Hain \cite{Hain} (which impose the lower bound on the genus $g$) and Putman \cite{Put18}.
But, because we were not able to completely ``translate'' the arguments of \cite{DHP} from the closed case
to the bordered case, we will also need the finite generation of $\calK$ which has been obtained more recently by Ershov \& He  \cite{EH} 
and Church, Ershov \& Putman~\cite{CEP}.

Thus, as done in \cite[\S 2]{DHP}, we start with the following general situation: $G$ is a  group, 
$G_{\operatorname{abf}}:=G_{\operatorname{ab}} / \operatorname{Tors}(G_{\operatorname{ab}})$ is its torsion-free abelianization,
and $K$ is the kernel of the canonical projection $G \to G_{\operatorname{abf}}$.
We are interested in the $\Q[G_{\operatorname{abf}}]$-module
$$
K_{\operatorname{ab}}\otimes \Q
$$
where the action of $G_{\operatorname{abf}}$ on $K_{\operatorname{ab}}$ is induced by the conjugacy action of $G$ on $K$. 
The $I$-adic filtration of the group algebra $\Q[G_{\operatorname{abf}}]$ induces a filtration on $K_{\operatorname{ab}}\otimes \Q$, 
and the corresponding completion is denoted by 
$$
\widehat{K_{\operatorname{ab}}\otimes \Q}.
$$
In fact,  $\widehat{K_{\operatorname{ab}}\otimes \Q}$ has a structure of  $\widehat{\Q[G_{\operatorname{ab}}]}$-module, 
where $\widehat{\Q[G_{\operatorname{ab}}]} $ denotes the $I$-adic completion of the group algebra ${\Q[G_{\operatorname{ab}}]} $.
(Note that $\widehat{\Q[G_{\operatorname{ab}}]} $ is isomorphic to $ \widehat{\Q[G_{\operatorname{abf}}]}$,
since both groups $G_{\operatorname{ab}}$ and $G_{\operatorname{abf}}$ are abelian and they have the same rationalization.)
Similarly,  using the action of $G_{\operatorname{ab}}$ on $G'_{\operatorname{ab}}$ induced by the conjugacy action of $G$ on $G'=\Gamma_2 G$,
we can consider the completion $\widehat{G'_{\operatorname{ab}}\otimes \Q}$  of ${G'_{\operatorname{ab}}\otimes \Q}$ 
with respect to the filtration induced by the $I$-adic filtration of the group algebra $\Q[G_{\operatorname{ab}}]$. 
It is proved in \cite[Prop$.$ 2.4]{DHP}, under the assumption
\begin{equation} \label{eq:hyp1}
\hbox{``$G$ is finitely generated and $K/G'$ is finite''},
\end{equation}
 that the canonical map 
\begin{equation} \label{eq:iso1}
\widehat{G'_{\operatorname{ab}}\otimes \Q} \longrightarrow \widehat{K_{\operatorname{ab}}\otimes \Q}
\end{equation}
induced by the inclusion $G' \hookrightarrow K$ is a filtered $\widehat{\Q[G_{\operatorname{ab}}]}$-linear  isomorphism.

To go further, let us recall that any group $G$ has a \emph{Malcev completion} $\mathsf{M}(G)$
and a  \emph{Malcev Lie algebra} $\mathfrak{m}(G)$: they are defined respectively
as the group-like part and the primitive part of the complete Hopf algebra 
$\widehat{\Q[G]}$, which is the $I$-adic completion of the group algebra ${\Q[G]}$.
Recall also that the group $\mathsf{M}(G)$ and the Lie algebra $\mathfrak{m}(G)$ inherit filtrations
from $\widehat{\Q[G]}$, and that they correspond each other through the formal $\exp$ and $\log$ series.
Let  $\mathfrak{m}(G)'$ be the derived subalgebra of the complete Lie algebra  $\mathfrak{m}(G)$,
and let  $\mathfrak{m}(G)'_{\operatorname{ab}}$ be its abelianization.
The adjoint action of $\mathfrak{m}(G)$ on itself induces an action of the abelian Lie algebra
$$
\mathfrak{m}(G)/\mathfrak{m}(G)' \simeq G_{\operatorname{ab}} \otimes \Q
$$
on the vector space  $\mathfrak{m}(G)'_{\operatorname{ab}}$.
Hence $\mathfrak{m}(G)'_{\operatorname{ab}}$ has also a structure of $\widehat{S}(G_{\operatorname{ab}} \otimes \Q)$-module, 
where  $\widehat{S}(G_{\operatorname{ab}} \otimes \Q)$ is the degree-completion of the symmetric algebra ${S}(G_{\operatorname{ab}} \otimes \Q)$
generated by the vector space $G_{\operatorname{ab}} \otimes \Q$.
According to \cite[Prop$.$ 5.4]{DPS}, the canonical map $\iota: G \to \mathsf{M}(G)$ 
composed with $\log:  \mathsf{M}(G) \to  \mathfrak{m}(G)$ induces a $\widehat{\Q[G_{\operatorname{ab}}]}$-linear isomorphism
\begin{equation} \label{eq:iso2}
\widehat{G'_{\operatorname{ab}}\otimes \Q}  \longrightarrow \mathfrak{m}(G)'_{\operatorname{ab}}.
\end{equation}
Here, the complete algebra $\widehat{\Q[G_{\operatorname{ab}}]}$ is identified  
with $\widehat{S}(G_{\operatorname{ab}} \otimes \Q)$ via the expansion $G_{\operatorname{ab}}\to \widehat{S}(G_{\operatorname{ab}} \otimes \Q)$
defined by $g\mapsto \exp(g)= \sum_{i\geq 0} {g^{\otimes i }}/{i!}$.
(Note that \eqref{eq:iso2}  shifts  filtrations
by $2$ if $\mathfrak{m}(G)'_{\operatorname{ab}}$ has the filtration induced by that of  $\mathfrak{m}(G)$.)

Assume now the following formality assumption on the group $G$:
\begin{equation} \label{eq:hyp2}
\begin{array}{l}
\hbox{``There is an isomorphism of filtered Lie algebras $ \mathfrak{m}(G) \to \widehat{\operatorname{Gr}}\ \mathfrak{m}(G)$}\\
\hbox{which is the identity at the graded level.''}
\end{array}
\end{equation}
Here  $\operatorname{Gr}\mathfrak{m}(G)$ is the associated graded  of  $ \mathfrak{m}(G)$,
and $\widehat{\operatorname{Gr}}\ \mathfrak{m}(G)$ denotes its degree-completion.
We recall that  $\operatorname{Gr}\mathfrak{m}(G)$  is canonically isomorphic to the associated graded $(\operatorname{Gr} G)\otimes \Q$
of the lower central series of $G$. Thus, under the assumption \eqref{eq:hyp2}, we get an isomorphism
\begin{equation} \label{eq:iso3}
 \mathfrak{m}(G)'_{\operatorname{ab}} \longrightarrow \widehat{\mathfrak{b}}(G)
\end{equation}
where
$$
{\mathfrak{b}}(G) :=  \big(\operatorname{Gr} \mathfrak{m}(G)\big)'_{\operatorname{ab}} = 
\frac{\big(\operatorname{Gr} \mathfrak{m}(G)\big)'}{ \big[\big(\operatorname{Gr} \mathfrak{m}(G)\big)', \big(\operatorname{Gr} \mathfrak{m}(G)\big)'\big]}
$$
is the \emph{infinitesimal Alexander module} of $G$.
 
Hence, if we assume simultaneously \eqref{eq:hyp1} and \eqref{eq:hyp2}, we can compose the inverse of  the isomorphism
\eqref{eq:iso1} with \eqref{eq:iso2} and next \eqref{eq:iso3} to get a filtered $\widehat{\Q[G_{\operatorname{ab}}]}$-linear isomorphism
\begin{equation} \label{eq:iso4}
\widehat{K_{\operatorname{ab}}\otimes \Q} \longrightarrow \widehat{\mathfrak{b}}(G).
\end{equation}
We now come back to the specific situation of  the Torelli group $\calI$ of $\Sigma$.

 \begin{proof}[Proof of Theorem \ref{th:DHP-MSS_bis}]
The above discussion applies to the group $G:= \calI$.
Indeed, according to the fundamental results of Johnson \cite{Jo83,Jo85a,Jo85b},
we have $K:= \calK$ in this case  and \eqref{eq:hyp1} is satisfied for any $g\geq 3$.
Furthermore, \eqref{eq:hyp2} is satisfied too when $g\geq 3$ according to Hain \cite{Hain}. Hence, 
we get a filtered isomorphism~\eqref{eq:iso4} between $\widehat{\calK_{\operatorname{ab}}\otimes \Q}$ and $\widehat{\mathfrak{b}}(\calI)$. 

Similarly to the closed case \cite{DHP},  the most important part of the proof is the following claim:
\begin{equation} \label{eq:claim}
\hbox{``${\calK_{\operatorname{ab}}\otimes \Q}$ is a nilpotent $\Q[\calI_{\operatorname{abf}}]$-module''}
\end{equation}
 which is proved using the restricted characteristic variety of the Torelli group $\calI$. 
For any finitely-generated group $G$,  let  $\mathbb{T}_0(G):= \Hom(G_{\operatorname{abf}} , \C^*)$ and recall that
 the \emph{restricted characteristic variety} of $G$ is defined by 
 $$
 \mathcal{V}(G) := \big\{ \rho \in \mathbb{T}_0(G): H_1^\rho (G; \C) \neq 0 \big\}.
 $$
 As before, denote by $K$ the kernel of the canonical projection $G \to G_{\operatorname{abf}}$.
 It follows from a result of Dwyer \& Fried \cite{DF}, in the refined form of \cite[Cor. 6.2]{PS},
 that  $\mathcal{V}(G)$ is finite if, and only if, $H_1(K;\Q)$ is finite-dimensional.
Besides, it is now known from \cite{EH,CEP} that $\calK$ is finitely generated in genus $g\geq 4$ and, so, $H_1(\calK;\Q)$ is finite-dimensional.
(In the closed case, that $H_1(\clo{\calK};\Q)$ is finite-dimensional was proved earlier in \cite{DP}.)
Hence $\mathcal{V}(\calI)$ is finite.

To go further in the proof of the claim \eqref{eq:claim}, we next show that any element of $\mathcal{V}(\calI)$  has finite order in the group $ \mathbb{T}_0(\calI)$.
For that, we closely follow the  proof of \cite[Theorem 3.1]{DHP} but with a small variation at the end of the argument. 
(This theorem  can not be applied directly in our situation,
because $\Lambda^3 H^{\Q} \simeq \calI_{\operatorname{abf}}\otimes \Q$  is not irreducible as an $\hbox{Sp}(H^\Q)$-module.)
Let $t\in \mathcal{V}(\calI)$ and assume that it has infinite order in $\mathbb{T}_0(\calI)$.
The canonical action of the group $\hbox{Sp}(H) \simeq \calM/\calI$ on $ \mathbb{T}_0(\calI)$ leaves $\mathcal{V}(\calI)$
globally invariant. Since $ \mathcal{V}(\calI)$ is finite, the stabilizer $D_t$ of $t$ under this action is a finite-index subgroup of  $\hbox{Sp}(H)$.
By Borel's density theorem,  $\hbox{Sp}(H)$ is Zariski-dense in $\hbox{Sp}(H^{\C})$, and so is  $D_t$  in $\hbox{Sp}(H^{\C})$. 
Let $Z$ be the Zariski-closure of the subgroup of  $ \mathbb{T}_0(\calI)$ generated by~$t$.
Since the algebraic group $Z$ is infinite, its dimension is at least $1$,
and so is the dimension of its Lie algebra $T_1Z$. 
Hence,  $T_1Z$ constitutes in the tangent space $T_1  \mathbb{T}_0(\calI) = \hbox{Hom}( \Lambda^3H^\C,\C)$,
of  $\mathbb{T}_0(\calI)$ at the trivial character $1$, a subspace which is fixed by $D_t$.
Hence, by Zariski-density of $D_t$ in  $\hbox{Sp}(H^{\C})$,  we obtain that the $\hbox{Sp}(H^{\C})$-invariant part of 
$ \hbox{Hom}( \Lambda^3H^\C,\C)$ is not reduced  to zero.
But this is not possible, because we have $ \hbox{Hom}( \Lambda^3H^\C,\C) \simeq \Lambda^{2g-3} H^\C$ as  $\hbox{Sp}(H^{\C})$-modules  
and $ \Lambda^k H^\C$ has no $\hbox{Sp}(H^{\C})$-invariant part if $k$ is not even. 
We conclude that any $t \in \mathcal{V}(\calI)$ is a torsion element of the group $ \mathbb{T}_0(\calI)$.
Since $ \mathcal{V}(\calI)$ is finite, we can thus find an integer $m\geq 1$  such that
\begin{equation} \label{eq:m-torsion}
\forall t\in  \mathcal{V}(\calI), \quad t^m=1\in  \mathbb{T}_0(\calI).
\end{equation}

Next, the proof of claim \eqref{eq:claim} is strictly the same as in \cite[Proof of Th$.$ A]{DHP}
 and we only highlight the main points for the reader's convenience. Let $\calI(m)$ be the preimage by the canonical projection $\calI \to \calI_{\operatorname{abf}}$
 of the subgroup of $ \calI_{\operatorname{abf}}$ consisting of elements that are divisible by $m$,
 and let $\calK(m)$ be the kernel of the canonical projection $\calI(m) \to \calI(m)_{\operatorname{abf}}$.
 According to \cite[Th$.$ B]{Put18}, all subgroups of $\calI$ of finite index containing $\calK$ have the same first Betti numbers for $g\geq 3$.
 Hence \cite[Lemma 3.4]{DHP} can be applied to deduce that
 \begin{itemize}
 \item[(i)] the inclusion $\calI(m) \hookrightarrow \calI$ induces an isomorphism 
 $\widehat{\calI(m)'_{\operatorname{ab}} \otimes \Q}  \to \widehat{\calI'_{\operatorname{ab}} \otimes \Q}$,
 \item[(ii)] we have $\calK(m) = \calK$.
 \end{itemize}
 By a double application of \eqref{eq:iso1}, we get a  $\widehat{\Q[\calI_{\operatorname{ab}}]}$-linear isomorphism
 $\widehat{\calI'_{\operatorname{ab}} \otimes \Q} \simeq \widehat{\calK_{\operatorname{ab}} \otimes \Q}$
 and a $\widehat{\Q[\calI(m)_{\operatorname{ab}}]}$-linear isomorphism
 $\widehat{\calI(m)'_{\operatorname{ab}} \otimes \Q} \simeq \widehat{\calK(m)_{\operatorname{ab}} \otimes \Q}$.
 By the statement (ii) above, $\widehat{\calK(m)_{\operatorname{ab}} \otimes \Q}$ is the completion ${}^m\widehat{\calK_{\operatorname{ab}}\otimes \Q}$
 of ${\calK_{\operatorname{ab}}\otimes \Q}$ defined by the $I$-adic filtration of $\Q[\calI(m)_{\operatorname{abf}}]$
 via the algebra homomorphism $\Q[\calI(m)_{\operatorname{abf}}] \to \Q[\calI_{\operatorname{abf}}] $ induced by the inclusion $\calI(m) \hookrightarrow \calI$.
 But, if we identify $\calI(m)_{\operatorname{abf}}$ with $\calI_{\operatorname{abf}}$ as in the proof of  \cite[Lemma 3.4]{DHP},
 that algebra homomorphism corresponds to the homomorphism $\Q[\iota_m]:\Q[ \calI_{\operatorname{abf}}] \to \Q[\calI_{\operatorname{abf}}]$ induced by
the ``multiply by $m$'' map $\iota_m:\calI_{\operatorname{abf}} \to  \calI_{\operatorname{abf}}$.
Hence,  denoting by ${}^m(\calK_{\operatorname{ab}}\otimes \Q)$  the vector space $\calK_{\operatorname{ab}}\otimes \Q$
with the structure of $\Q[\calI_{\operatorname{abf}}]$-module defined by the algebra map 
$\Q[\iota_m]: \Q[\calI_{\operatorname{abf}}] \to \Q[\calI_{\operatorname{abf}}]$, 
we see that ${}^m\widehat{\calK_{\operatorname{ab}}\otimes \Q}$ is the completion of ${}^m(\calK_{\operatorname{ab}}\otimes \Q)$
with respect to the fitration defined by the $I$-adic filtration of $\Q[\calI_{\operatorname{abf}}]$.
We deduce from the above statement (i) that the $I$-adic completion $\widehat{\calK_{\operatorname{ab}} \otimes \Q}$ 
 of the $\Q[\calI_{\operatorname{abf}}]$-module $\calK_{\operatorname{ab}} \otimes \Q$
 is isomorphic to the $I$-adic completion  ${}^m\widehat{\calK_{\operatorname{ab}}\otimes \Q}$  
 of the   $\Q[\calI_{\operatorname{abf}}]$-module ${}^m(\calK_{\operatorname{ab}}\otimes \Q)$.
 
Next, exactly as in the second paragraph of \cite[p. 816]{DHP} (which involves  a result of \cite{PS} relating  restricted character varieties 
to supports of Alexander modules), we deduce from \eqref{eq:m-torsion} that
 ${}^m(\calK_{\operatorname{ab}}\otimes \Q)$ is nilpotent as a $\Q[\calI_{\operatorname{abf}}]$-module.
Thus the canonical map ${}^m(\calK_{\operatorname{ab}}\otimes \Q) \to {}^m\widehat{\calK_{\operatorname{ab}}\otimes \Q}$
is injective, and so is the canonical map $\calK_{\operatorname{ab}}\otimes \Q \to \widehat{\calK_{\operatorname{ab}}\otimes \Q}$.
In other words, the filtration of $\calK_{\operatorname{ab}}\otimes \Q$ defined by the $I$-adic filtration of  $\Q[\calI_{\operatorname{abf}}]$
has trivial intersection; moreover, this filtration must stabilize since $\calK_{\operatorname{ab}}\otimes \Q$ is finite-dimensional.
 Thus, we have proved claim~\eqref{eq:claim}.  (Note that  the arguments can be continued a little bit further, as in the closed case,
to deduce that  $\mathcal{V}(\calI)$ is actually reduced to the trivial character.)

Since the canonical map ${\calK_{\operatorname{ab}}\otimes \Q} \to \widehat{\calK_{\operatorname{ab}}\otimes \Q}$  
 is an isomorphism,  $\widehat{\calK_{\operatorname{ab}}\otimes \Q}$ is finite-dimensional
 so that, by \eqref{eq:iso4}, the graded module  ${\mathfrak{b}}(\calI)$ 
 is concentrated in finitely many degrees. Therefore, in fine, there is a filtered $\widehat{\Q[\calI_{\operatorname{ab}}]}$-linear isomorphism
\begin{equation} \label{eq:iso5}
\digamma:{\calK_{\operatorname{ab}}\otimes \Q} \longrightarrow{\mathfrak{b}}(\calI)
\end{equation}
(which  is induced by any isomorphism $ \mathfrak{m}(\calI) \to \widehat{\operatorname{Gr}}\ \mathfrak{m}(\calI)$ 
giving the identity at the graded level). Moreover, the least integer $\ell$ such that ${\mathfrak{b}}_k(\calI)=0$ for all $k \geq \ell$
corresponds to the least integer $\ell-2$ such that $I^{\ell-2}$ acts trivially on $\calK_{\operatorname{ab}}\otimes \Q$,
where $I$ is the augmentation ideal of $\Q[\calI_{\operatorname{ab}}]$.

 We now follow the same strategy as in \cite[Proof of Th$.$ 1.4]{MSS} 
 but, again, with some  variations with respect to the closed case. We also give more explicit arguments.
 Since the graded Lie algebra $\operatorname{Gr} \mathfrak{m}(\calI)$ is generated by its degree $1$ part, we have 
 $$
 \big(\operatorname{Gr} \mathfrak{m}(\calI)\big)' = \operatorname{Gr}_{\geq 2} \mathfrak{m}(\calI).
 $$
 By \cite[Th$.$ 1.2 \& Prop$.$ 3.1]{MSS}, the canonical map between
 $\operatorname{Gr}_4 \mathfrak{m}(\calI) \simeq (\Gamma_4 \calI/ \Gamma_5 \calI) \otimes \Q $
 and $(\calM[4]/ \calM[5])\otimes \Q$ is an isomorphism for $g\geq 6$; furthermore, \cite[Lemma 4.4 \& Th$.$ 4.7]{Sakasai} implies that
 the Lie bracket map $\Lambda^2\big( (\calM[2]/ \calM[3])\otimes \Q \big) \to (\calM[4]/ \calM[5])\otimes \Q$  is surjective  for $g\geq 4$:
 therefore, the Lie bracket map $\Lambda^2 \operatorname{Gr}_2 \mathfrak{m}(\calI) \to \operatorname{Gr}_4 \mathfrak{m}(\calI)$ is surjective for $g\geq 6$.
 It follows that the degree $4$ part of ${\mathfrak{b}}(\calI)$ is trivial. Next, it can be proved by an induction on $j\geq 4$
 that ${\mathfrak{b}}_j(\calI)=0$  using  the fact that  $\operatorname{Gr} \mathfrak{m}(G)$ is generated in degree $1$: 
 the argument is general, and the same as used in \cite[Proof of Th$.$ 1.4]{MSS} for the closed case.
(In particular, we deduce that the integer $\ell$ defined in the previous paragraph is equal to $4$.)
Thus we have obtained that ${\mathfrak{b}}(\calI)$ is concentrated in degrees $2$ and $3$:
$$
{\mathfrak{b}}(\calI) = {\mathfrak{b}}_2(\calI) \oplus {\mathfrak{b}}_3(\calI)  
= \operatorname{Gr}_2 \mathfrak{m}(G) \oplus \operatorname{Gr}_3 \mathfrak{m}(G).
$$
so that the the isomorphism \eqref{eq:iso5} can be viewed as an isomorphism:
$$
\digamma=(\digamma_2,\digamma_3): \calK_{\operatorname{ab}}\otimes \Q \longrightarrow 
\Big( \frac{\Gamma_2 \calI}{\Gamma_3 \calI} \otimes\Q \Big)\oplus \Big( \frac{\Gamma_3 \calI}{\Gamma_4 \calI} \otimes\Q  \Big)
$$

Let now $j: \calK \to \calK_{\operatorname{ab}}\otimes \Q$ be the canonical homomorphism.
The statement of Theorem \ref{th:DHP-MSS_bis} can be rephrased as follows:
\begin{equation} \label{eq:ker(j)}
\ker(j) = \ker(d) \cap \ker \big(r^\theta_{[2,4[}\big).
\end{equation}
Since $\digamma$ is an isomorphism, we have $\ker(j) =\ker (\digamma \circ j)$;
besides, since $\digamma$ arises from a formality isomorphism of $\mathfrak{m}(\calI)$ as in \eqref{eq:hyp2}, we have the equality
$\ker (\digamma \circ j)=   \sqrt{\Gamma_4 \calI}$ in $\calK$,
where $\sqrt{\Gamma_4 \calI}$ denotes the radical of $\Gamma_4 \calI$ in $\calI$. 
Hence we see that \eqref{eq:ker(j)} is equivalent to
\begin{equation} \label{eq:radical}
 \sqrt{\Gamma_4 \calI} =  \ker(d) \cap \ker \big(r^\theta_{[2,4[}\big).
\end{equation}
That $  \sqrt{\Gamma_4 \calI}$ is contained in $ \ker(d) \cap \ker \big(r^\theta_{[2,4[}\big)$ is clear, 
since we have $\Gamma_3 \calI \subset \ker(d)$ and $\calM[4] = \ker \big(r^\theta_{[2,4[}\big)$.
To prove the converse, let  $k \in \ker(d) \cap \ker \big(r^\theta_{[2,4[}\big)$. 
Recall that $[\calI ,\calI]$ is of finite index in $\calK$:
hence, to justify that $k\in \sqrt{\Gamma_4 \calI}$,
we can assume without loss of generality that $k\in \Gamma_2 \calI$.
We know from \cite{Mor89,Mor91,Hain} that 
\begin{equation} \label{eq:Morita}
(d,\tau_2): \frac{\Gamma_2 \calI}{\Gamma_3 \calI} \otimes \Q \longrightarrow \Q \oplus \calT_2(H^\Q)
\end{equation}
is an isomorphism for any $g\geq 3$. Therefore we have  $k\in  \sqrt{\Gamma_3 \calI}$:
let $m\in \mathbb{N}$ be such that $k^m \in \Gamma_3 \calI$.
We have $k^m\in \calM[3]$ and $\tau_3(k^m) = m\, r^\theta_3(k)=0$.
But we also know from \cite[Prop$.$ 6.3]{Mor99} that $\tau_3$ 
induces an embedding of  $(\Gamma_3 \calI/\Gamma_4 \calI)\otimes \Q$ into $\calT_3(H^\Q)$:
 therefore, $k^m$  belongs to $\sqrt{\Gamma_4 \calI}$, and so does $k$. This completes the proof of \eqref{eq:ker(j)}.
\end{proof}

\subsection{Complements}

Theorem \ref{th:DHP-MSS} and Theorem \ref{th:DHP-MSS_bis} produce embeddings of the rational abelianized Johnson kernel
into well-understood vector spaces, and these theorems will be enough for our purpose of proving Theorem A.
Yet, for the sake of completeness, we now identify  the  images and   the equivariance property 
of those embeddings. In the closed case, similar results  were given in \cite[Theorem B]{DHP}.
Here, we consider both the closed case and the bordered case.

The homomorphism $d: \calK \to \Z$ (resp. $\clo{d}: \clo{\calK} \to \Z$) is known to be $\calM$-invariant 
(resp$.$  $\clo{\calM}$-invariant). Thus, for the equivariance property of the embeddings of Theorem~\ref{th:DHP-MSS} and Theorem~\ref{th:DHP-MSS_bis},
we only have to understand how $r^\theta_{[2,4[}$ behaves under conjugacy by the mapping class group. 
For this, we consider the following analogue of Morita's extension of $\tau_1$~\cite{Mor93b}:
\begin{equation}
\tau_1^\theta: \calM \longrightarrow \Lambda^3 H^\Q \rtimes \operatorname{Sp}(H^\Q), \ f \longmapsto \big(\tilde{r}_1^\theta(f), f_*\big)
\end{equation}
Here $\tilde{r}_1^\theta$ denotes the degree $1$ part of the map $\tilde{r}^\theta: \calM \to \hat\calT(H^\Q)$  defined by \eqref{eq:tilde_r}.
It follows from \eqref{eq:star_tilde} that $\tau_1^\theta$ is a group homomorphism; its kernel is  $\calK$.
It can be also checked that~$\tau_1^\theta$ induces a  group homomorphism 
$\tau^\theta_1: \clo{\calM} \to \frac{\Lambda^3 H^\Q}{\omega \wedge H^\Q} \rtimes \operatorname{Sp}(H^\Q)$ whose kernel is  $\clo{\calK}$.
Besides, the target group $\Lambda^3 H^\Q \rtimes \operatorname{Sp}(H^\Q)$ acts on ${\calT}_2(H^\Q)  \oplus {\calT}_3(H^\Q)$ by
$$
\forall (w,\psi) \in \Lambda^3 H^\Q \times \operatorname{Sp}(H^\Q), \  \forall (a,b) \in {\calT}_2(H^\Q)  \oplus {\calT}_3(H^\Q),  \quad
 (w,\psi)\cdot \big(a,b\big): = (\psi\cdot a, \psi \cdot b + [w,\psi \cdot a] ),
$$
and we also have an induced action of $ \frac{\Lambda^3 H^\Q}{\omega \wedge H^\Q} \rtimes \operatorname{Sp}(H^\Q)$ 
on  $\clo{\calT}_2(H^\Q)  \oplus \clo{\calT}_3(H^\Q)$.

\begin{proposition}
The map $r^\theta_{[2,4[}: \calK \to {\calT}_2(H^\Q)  \oplus {\calT}_3(H^\Q)$ 
(resp. $r^\theta_{[2,4[}: \clo{\calK} \to \clo{\calT}_2(H^\Q)  \oplus \clo{\calT}_3(H^\Q)$) is equivariant over the group homomorphism $\tau_1^\theta$.
\end{proposition}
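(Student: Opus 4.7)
The plan is to deduce the equivariance from a direct computation of the conjugation formula for $\tilde r^\theta$ up to degree $3$, followed by a degree-by-degree match with the prescribed action of $\Lambda^3 H^\Q \rtimes \operatorname{Sp}(H^\Q)$ (resp$.$  $\tfrac{\Lambda^3 H^\Q}{\omega \wedge H^\Q} \rtimes \operatorname{Sp}(H^\Q)$) on tree diagrams. First, I would fix $f \in \calM$ and $k \in \calK$, and apply the cocycle formula \eqref{eq:star_tilde} twice to obtain
$$
\tilde r^\theta(fkf^{-1}) = \tilde r^\theta(f) \star \bigl(f_* \cdot r^\theta(k)\bigr) \star \bigl(f_* \cdot \tilde r^\theta(f^{-1})\bigr),
$$
where I have used $k_* = \operatorname{id}$ since $k \in \calI$.

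Next, I would notice that applying \eqref{eq:star_tilde} to the pair $(f, f^{-1})$ gives $\tilde r^\theta(f) \star \bigl(f_* \cdot \tilde r^\theta(f^{-1})\bigr) = 0$, so that $f_* \cdot \tilde r^\theta(f^{-1})$ is the BCH-inverse $-\tilde r^\theta(f)$ of $A := \tilde r^\theta(f)$. Setting $X := f_* \cdot r^\theta(k)$, the expression above becomes $A \star X \star (-A)$, which by the standard BCH identity equals $e^{\operatorname{ad}(A)}(X) = X + [A,X] + \tfrac{1}{2}[A,[A,X]] + \cdots$. Since $A$ starts in degree $1$ (recall that $\tilde r^\theta_1(f)$ lives in $\calT_1(H^\Q) \simeq \Lambda^3 H^\Q$) and $X$ starts in degree $2$ (because $k \in \calK = \calM[2]$, so $r^\theta(k)$ starts in degree $2$ and $f_*$ preserves the degree filtration), the degree $2$ part of $e^{\operatorname{ad}(A)}(X)$ is simply $X_2 = f_* \cdot r^\theta_2(k)$, and its degree $3$ part equals
$$
X_3 + [A_1, X_2] \;=\; f_* \cdot r^\theta_3(k) + \bigl[\tilde r^\theta_1(f),\, f_* \cdot r^\theta_2(k)\bigr].
$$
This is precisely the formula for $\bigl(\tilde r^\theta_1(f), f_*\bigr) \cdot r^\theta_{[2,4[}(k) = \tau_1^\theta(f) \cdot r^\theta_{[2,4[}(k)$ under the action defined just above the proposition, which gives the equivariance for the bordered case.

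For the closed case, I would invoke the commutative diagram \eqref{eq:rr} relating $r^\theta$ on $\calI$ and on $\clo\calI$ via the canonical projection $\calT(H^\Q) \twoheadrightarrow \clo\calT(H^\Q)$: since this projection intertwines the two Lie brackets and the $\operatorname{Sp}(H^\Q)$-actions, the degree-$[2,4[$ computation descends from $\calK$ to $\clo\calK$, giving equivariance with respect to the induced homomorphism $\tau_1^\theta: \clo\calM \to \tfrac{\Lambda^3 H^\Q}{\omega \wedge H^\Q} \rtimes \operatorname{Sp}(H^\Q)$. No real obstacle is expected; the only delicate point is the careful bookkeeping of the degrees of $A$ and $X$ which guarantees that the higher-order BCH brackets $[A,[A,X]], \ldots$ only contribute in degrees $\geq 4$ and can therefore be discarded from the truncation $r^\theta_{[2,4[}$.
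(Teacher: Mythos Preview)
Your proof is correct and follows essentially the same approach as the paper: both arrive at the conjugation formula $r^\theta(fkf^{-1}) = e^{\operatorname{ad}(\tilde r^\theta(f))}(f_*\cdot r^\theta(k))$ and then truncate to degrees $2$ and $3$, with the closed case reduced to the bordered case via the commutative diagram~\eqref{eq:rr}. The only cosmetic difference is that the paper derives this formula by factoring the conjugation at the level of automorphisms of $\hat\frakL^\Q$ before taking $\log$, whereas you obtain it from the cocycle identity~\eqref{eq:star_tilde} directly in the BCH group $\hat\calT(H^\Q)$.
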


\begin{proof}
By the commutativity of \eqref{eq:rr}, it suffices to prove the proposition in the bordered case. 
A straightforward computation gives
\begin{eqnarray*}
\forall f\in \calM, \forall h\in \calI, \quad r^\theta(fhf^{-1}) &=&  (\eta^\Q)^{-1} \log \big( \varrho^\theta(fhf^{-1}) \big) \\
&=&  (\eta^\Q)^{-1} \log \big( (\varrho^\theta(f) f_*^{-1}) \circ (f_* \varrho^\theta( h) f_*^{-1} ) \circ  (\varrho^\theta(f) f_*^{-1})^{-1} \big) \\
&=& e^{[\tilde r^\theta(f),-]}\big(f_*\cdot r^\theta(h)) \\
&=& f_* r^\theta(h) + \big[ \tilde r^\theta(f),  f_* r^\theta(h) \big] 
+ \frac{1}{2} \big[ \tilde r^\theta(f), \big[ \tilde r^\theta(f),  f_*r^\theta(h) \big]\big] + \cdots
\end{eqnarray*}
which implies that 
$$
\forall f\in \calM, \forall h\in \calK, \quad  r^\theta_{[2,4[}(fhf^{-1}) = f_* \cdot  r^\theta_{[2,4[}(h) + \big[\tilde r_1^\theta(f),f_* \cdot r^\theta_{2}(h)\big].
$$

\up
\end{proof}

To understand the image of $r^\theta_{[2,4[}: \calK_{\operatorname{ab}}\otimes \Q \to \calT_2(H^\Q) \oplus \calT_3(H^\Q)$, we need  \emph{Morita's trace}. This linear map
$
\operatorname{Tr}_k: \calT_k(H^\Q) \to S^k(H^\Q)
$
has been introduced in \cite{Mor93a} for any odd $k$, and it is defined for $k=3$ by 
$$ 
\operatorname{Tr}_3\Bigg(\ \begin{array}{c}
{\labellist \small \hair 2pt
\pinlabel {$a$} [tr] at 0 0
\pinlabel {$b$} [br] at 0 59
\pinlabel {$c$} [b] at 69 76
\pinlabel {$d$} [bl] at 138 63
\pinlabel {$e$} [tl] at 135 0
\endlabellist}
\includegraphics[scale=0.3]{fission} \end{array} \ \Bigg) 
:= 2\omega(e,a)\, bcd + 2\omega(a,d)\, ecb+ 2 \omega(d,b)\, ace + 2 \omega(b,e)\, dca.
$$
A straightforward computation shows that it factorizes to $\clo{\operatorname{Tr}}_3: \clo{\calT}_3(H^\Q) \to S^3(H^\Q)$.

\begin{proposition}
The image of $r^\theta_{[2,4[}$ is $ \calT_2(H^\Q) \oplus \ker \operatorname{Tr}_3$
(resp$.$, $\clo{\calT}_2(H^\Q) \oplus \ker \clo{\operatorname{Tr}}_3$) in the bordered case (resp$.$, in the closed case).
\end{proposition}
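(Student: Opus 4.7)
The plan is to handle the bordered case first and to deduce the closed case from the commutative diagram \eqref{eq:rr}. By specializing Lemma \ref{lem:truncation} to $k=2$ and truncating to degrees $[2,3]$, every bracket correction $\tfrac{1}{2}[r^\theta_i(f), r^\theta_j(h)]$ that appears in the formula lives in degree $\geq 4$. Hence $r^\theta_{[2,3]}: \calK \to \calT_2(H^\Q) \oplus \calT_3(H^\Q)$ is a \emph{group homomorphism}, and in particular so is the composition $\operatorname{Tr}_3 \circ r^\theta_3 : \calK \to S^3(H^\Q)$.

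Next I would combine two ingredients from the proof of Theorem \ref{th:DHP-MSS_bis}. On one hand, the isomorphism $\digamma$ identifies $\calK_{\operatorname{ab}}\otimes \Q$ with $(\Gamma_2\calI/\Gamma_3\calI \otimes \Q) \oplus (\Gamma_3\calI/\Gamma_4\calI \otimes \Q)$. On the other, the first summand is isomorphic to $\Q \oplus \calT_2(H^\Q)$ via $(d,\tau_2)$ by \eqref{eq:Morita}, while the second is isomorphic to $\ker \operatorname{Tr}_3 \subset \calT_3(H^\Q)$ via $\tau_3$ by Morita \cite[Prop. 6.3]{Mor99} combined with Hain's formality. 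Summing, $\dim(\calK_{\operatorname{ab}}\otimes \Q) = \dim(\Q \oplus \calT_2(H^\Q) \oplus \ker \operatorname{Tr}_3)$, which is also the dimension of the image of $(d, r^\theta_{[2,4[})$, since that map is an embedding on $\calK_{\operatorname{ab}}\otimes \Q$ by Theorem \ref{th:DHP-MSS_bis}. Therefore the equality $\operatorname{Im}(d, r^\theta_{[2,4[}) = \Q \oplus \calT_2(H^\Q) \oplus \ker \operatorname{Tr}_3$ will follow as soon as the containment is established, and projecting to the $\calT_2(H^\Q) \oplus \calT_3(H^\Q)$-factor will then yield the proposition.

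It therefore remains to prove $\operatorname{Tr}_3 \circ r^\theta_3 \equiv 0$ on $\calK$. This homomorphism vanishes on $\calM[3]$, because $r^\theta_3|_{\calM[3]} = \tau_3$ and $\operatorname{Tr}_3 \circ \tau_3 = 0$ by Morita's theorem; it therefore factors through $\calK/\calM[3] \otimes \Q \simeq \mathsf{D}_2(H^\Q)$ via $\tau_2$. Using the conjugation variation formula derived from \eqref{eq:star_tilde}, namely
\[ r^\theta_3(FfF^{-1}) \equiv r^\theta_3(f) + \bigl[\tau_1(F), \tau_2(f)\bigr] \pmod{\text{degree} \geq 4} \]
for $F \in \calI$ and $f \in \calK$, combined with the Morita-type vanishing $\operatorname{Tr}_3\bigl([\tau_1(F), \tau_2(f)]\bigr) = 0$, one checks that the induced map $\phi : \mathsf{D}_2(H^\Q) \to S^3(H^\Q)$ is $\operatorname{Sp}(H^\Q)$-equivariant. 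The $\operatorname{Sp}(H^\Q)$-modules $\mathsf{D}_2(H^\Q)$ and $S^3(H^\Q)$ share no common irreducible constituent, so $\phi = 0$ by Schur's lemma. The closed case is then deduced from the bordered case via \eqref{eq:rr}, using that Morita's trace descends along the canonical projection $\calT_3(H^\Q) \twoheadrightarrow \clo\calT_3(H^\Q)$ to $\clo{\operatorname{Tr}}_3$.

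The main obstacle lies in the representation-theoretic input of the last paragraph: verifying the vanishing of the $\operatorname{Sp}$-equivariant bracket map $\Lambda^3 H^\Q \otimes \calT_2(H^\Q) \to S^3(H^\Q)$ (needed for the equivariance of $\phi$) and checking the disjointness of the $\operatorname{Sp}(H^\Q)$-irreducible constituents of $\mathsf{D}_2(H^\Q)$ and $S^3(H^\Q)$, both of which are standard but somewhat delicate Schur--Weyl calculations. Alternatively, one could establish $\operatorname{Tr}_3 \circ r^\theta_3 \equiv 0$ by a direct computation on separating twists, which generate $\calK$, using the explicit symplectic logansion of Example \ref{ex:logansion}.
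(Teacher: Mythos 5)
Your overall plan --- establish dimension equality, prove the containment $\operatorname{Im}\big(r^\theta_{[2,4[}\big) \subset \calT_2(H^\Q) \oplus \ker\operatorname{Tr}_3$, and conclude by a dimension count --- is the same as the paper's.  The interesting divergence is in how the two handle the containment, equivalently the vanishing of $\operatorname{Tr}_3 \circ r^\theta_3$ on all of $\calK$ (not merely on $\calM[3]$, which the paper explicitly warns is not enough).  The paper simply cites \cite[Prop.~7.3]{MS} for this, whereas you outline a self-contained argument: $\operatorname{Tr}_3 \circ r^\theta_3$ is a homomorphism on $\calK$, it kills $\calM[3]$, so it factors through $\tau_2$ to give an $\operatorname{Sp}(H^\Q)$-map $\mathsf{D}_2(H^\Q) \to S^3(H^\Q)$, which must vanish by inspecting irreducible constituents.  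This is a genuine alternative and, if written out, would re-prove the needed case of \cite[Prop.~7.3]{MS}.  Two caveats:  (1) the equivariance step depends on $\operatorname{Tr}_3$ killing $[\Lambda^3 H^\Q, \calT_2(H^\Q)]$, but this is \emph{not} an independent Schur--Weyl fact to be checked --- it is a direct consequence of Morita's $\operatorname{Tr}_3 \circ \tau_3 = 0$ together with $[\tau_1(\calI),\tau_2(\calK)] = \tau_3([\calI,\calK]) \subset \tau_3(\calM[3])$ and the rational surjectivity of $\tau_1,\tau_2$; you should not need a separate delicate computation there.  (2) Your dimension count routes through $\calK_{\operatorname{ab}}\otimes\Q \simeq (\Gamma_2\calI/\Gamma_3\calI \otimes \Q) \oplus (\Gamma_3\calI/\Gamma_4\calI \otimes\Q)$ and asserts $\Gamma_3\calI/\Gamma_4\calI \otimes\Q \simeq \ker\operatorname{Tr}_3$; but the paper's cited input \cite[Prop.~6.3]{Mor99} for $\Gamma_3\calI$ only gives an embedding, and the isomorphism onto $\ker\operatorname{Tr}_3$ is recorded by Morita for $\calM[3]/\calM[4]$, so you would also need to compare $\Gamma_3\calI/\Gamma_4\calI$ with $\calM[3]/\calM[4]$ rationally.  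The paper side-steps this entirely by counting via $\calM[2]/\calM[4]\otimes\Q$, on which $r^\theta_{[2,4[}$ is injective by definition of the Johnson filtration --- that is the cleaner route and I would adopt it.  The remaining representation-theoretic input (decomposing $S^3(H^\Q)$ as $V_{[3]} \oplus V_{[1]}$ and $\mathsf{D}_2(H^\Q)$ as $V_{[2,2]} \oplus V_{[1,1]} \oplus V_{[0]}$, and noting disjointness) is indeed standard, but it is the content you would owe if you want to avoid the citation.
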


\begin{proof}
By works of Morita \cite{Mor99}, we have the following isomorphisms:
$$
\frac{\calM[2]}{\calM[3]}\otimes \Q \mathop{\longrightarrow}_{\simeq}^{\tau_2} \calT_2(H^\Q)
\quad \hbox{and} \quad 
\frac{\calM[3]}{\calM[4]}\otimes \Q \mathop{\longrightarrow}_{\simeq}^{\tau_3}\ker(\operatorname{Tr}_3) \subset \calT_3(H^\Q)
$$
We deduce from these isomorphisms and the injectivity of $r^\theta_{[2,4[}:  \frac{\calM[2]}{\calM[4]}\otimes \Q \to \calT_2(H^\Q) \oplus \calT_3(H^\Q)$ that
$$
 \dim \big( \calT_2(H^\Q) \oplus \ker \operatorname{Tr}_3 \big) =\dim \Big( \frac{\calM[2]}{\calM[4]}\otimes \Q \Big) = \dim r^\theta_{[2,4[}(\calK_{\operatorname{ab}}\otimes \Q) .
$$
So, to conclude that $r^\theta_{[2,4[}(\calK_{\operatorname{ab}}\otimes \Q)$  is equal to  $\calT_2(H^\Q) \oplus \ker \operatorname{Tr}_3$, 
it is enough to recall from \cite[Prop$.$~7.3]{MS} that the former is contained in the latter.
(Note that the triviality of $\operatorname{Tr}_3 \circ \tau_3$, due to Morita \cite[Th$.$~6.11]{Mor93a}, is not enough to conclude here.)

The proof in the closed case follows the same lines.
\end{proof}

\section{Proofs of Theorem A and Theorem B}   \label{sec:proofs}

In this section, we prove Theorem A and Theorem B.  
In fact, we give two proofs of Theorem B: the first one is purely 2-dimensional, 
while the second one uses $3$-dimensional  surgery techniques.

\subsection{Proof of Theorem A}  \label{subsec:proof_A}

We first prove Theorem A assuming Theorem B.
We only deal with  the case of the bordered surface $\Sigma$,
since the case of the closed surface $\clo{\Sigma}$ is proved exactly in the same way.

Theorem B asserts the existence of an element $\varphi \in \calM[4]$ such that $d(\varphi)=0$ and $R(\varphi) \neq 0$.
That $\varphi$ belongs to $\calM[4]$ implies that $r^\theta_{[2,4[}(\varphi)=0$. 
Then we deduce from Theorem \ref{th:DHP-MSS_bis} that the class $\{\varphi\} \in \calK_{\operatorname{ab}}$ is a torsion element.
Furthermore, we have $R_{\operatorname{ab}}(\{\varphi\})= R(\varphi) \neq 0$ and we conclude that $\{\varphi\} \neq 0 \in \calK_{\operatorname{ab}}$.

\subsection{Proof of Theorem B}  \label{subsec:proof_B}

   We shall exhibit an element $\varphi \in \calM[3]$ of the form
$$
\varphi=[i,k], \quad \hbox{where  $i\in \calI$ and $k\in \calK$}.
$$
Recall that $\calI$ is generated by opposite Dehn twists $T_{c^-}^{-1} T_{c^+}$ along  pairs $(c^-,c^+)$ of simple closed curves
that cobound a subsurface of $\Sigma$:
in short,  $T_{c^-}^{-1} T_{c^+}$ is called a \emph{bounding pair map}. 
The above element $i$ will be given in this generating system of $\calI$, 
while $k$ will be given as a product of separating twists.
  
It will turn out that  $\varphi$ actually belongs to $\calM[4]$ and satisfies the following:
\begin{eqnarray}
\label{eq:dos} R(\varphi)&=& \frac{1}{2} \lsixtree{a_1}{a_2}{a_3}{a_3}{a_2}{a_1} \mod 1
\end{eqnarray} 
Since $\varphi$ belongs to $[\calM, \calK]$, we have $d(\varphi)=0$; since we have $R(\varphi) = j([a_3,[a_2,a_1]])$
and $j$ is injective, we have $R(\varphi)\neq 0$. This proves Theorem B for the bordered surface $\Sigma$.

The extension $\clo{\varphi}$ of $\varphi$ to $\clo{\Sigma}$ satisfies $\clo{d}(\clo{\varphi})=-\frac{1+2g}{12}d(\varphi)$ since $\varphi \in \calM[3]$: 
therefore  $\clo{d}(\clo{\varphi})=0$.
Besides, it follows from \eqref{eq:RR} that $R(\clo{\varphi}) {=} \clo{j}([a_3,[a_2,a_1]]\otimes 1)$
where the homomorphism $\clo{j}$ is introduced in Remark~\ref{rem:about_L}.
Since $[a_3,[a_2,a_1]]\otimes 1$ is a non-trivial element of $\frakL_3(A)\otimes \Z_2$, 
the commutativity of \eqref{eq:parallelogramme} implies that $R(\clo{\varphi}) \neq 0$.
This proves Theorem B for the closed surface~$\clo{\Sigma}$.

The rest of the subsection is devoted to the construction of $\varphi\in \calM[4]$ and the proof of~\eqref{eq:dos}.

\subsubsection{The element $\varphi$ of $\calM[4]$}

Let $(c_1^+, c_1^-)$ and $(c_2^+, c_2^-)$ be the pairs of   curves    in $\Sigma$ shown in Figure \ref{fig:BP}:
note that   $c_i^+$ and  $c_i^-$ cobound a  subsurface of genus $1$. 
Then consider the following product of  bouding pair maps:   
\begin{equation} \label{eq:i}
i := \big(T_{c_1^{-}}^{-1}\,  T_{c_1^+} \big) \circ \big(T_{c_2^{-}}^{-1}\,  T_{c_2^+} \big)^{-1} = T_{c_1^{-}}^{-1}\,  T_{c_2^{-}} \in \calI.
\end{equation}
 \begin{figure}[h]
 \includegraphics[scale=0.5]{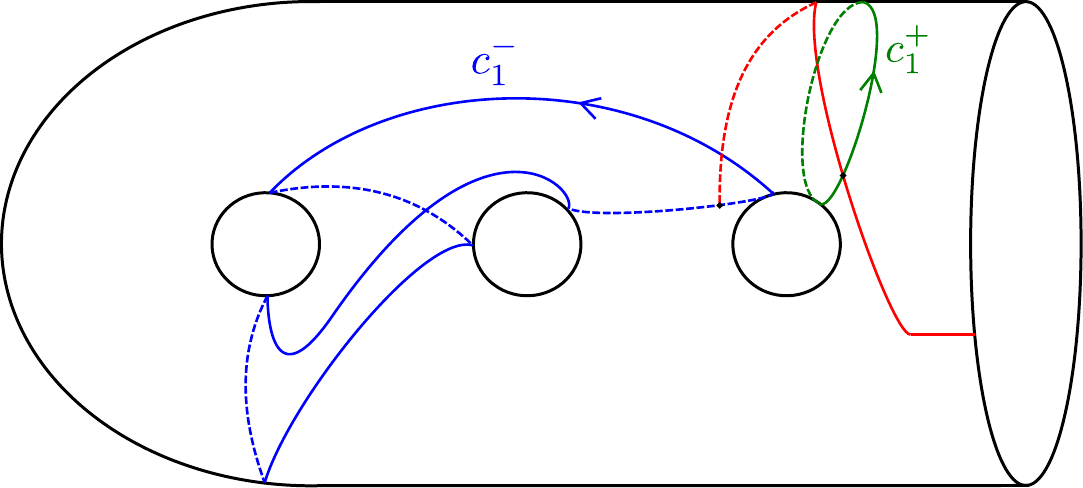} \qquad \qquad
  \includegraphics[scale=0.5]{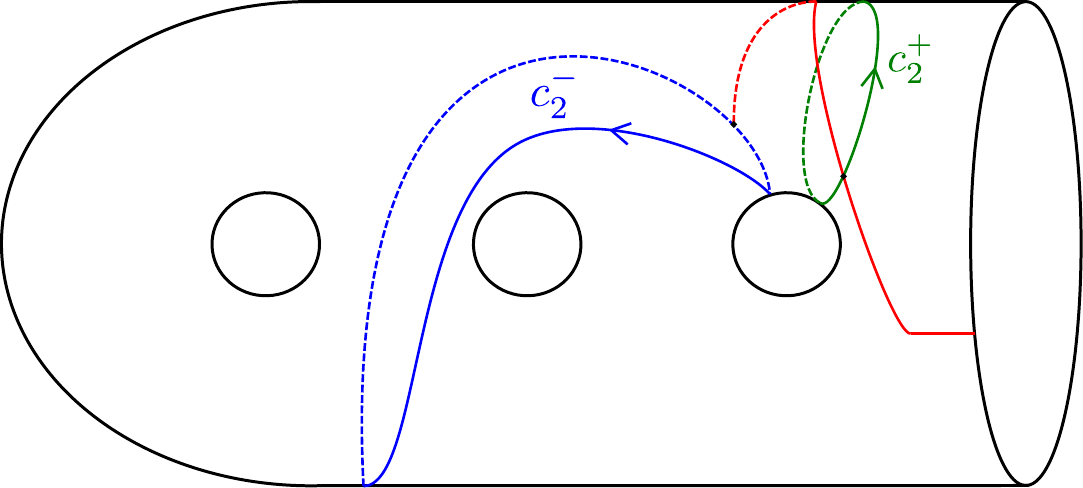}
\caption{The bounding pairs $(c_1^+, c_1^-)$ and $(c_2^+, c_2^-)$ in $\Sigma$} \label{fig:BP}
\end{figure}
 
Besides, let $\gamma_1,\gamma_2,\gamma_3,\gamma_4$ be the four   curves     shown in Figure \ref{fig:BSCC}:
note that each of $\gamma_1,\gamma_2$ bounds a   subsurface    of genus $2$,
and  each of $\gamma_3,\gamma_4$ bounds  a   subsurface     of genus $1$.
  Then consider the following product of  separating twists:   
$$
k:= T_{\gamma_1}\, T_{\gamma_2}^{-1}\, T_{\gamma_3}^{-1}\, T_{\gamma_4} \in \calK.
$$
 \begin{figure}[h]
 \includegraphics[scale=0.5]{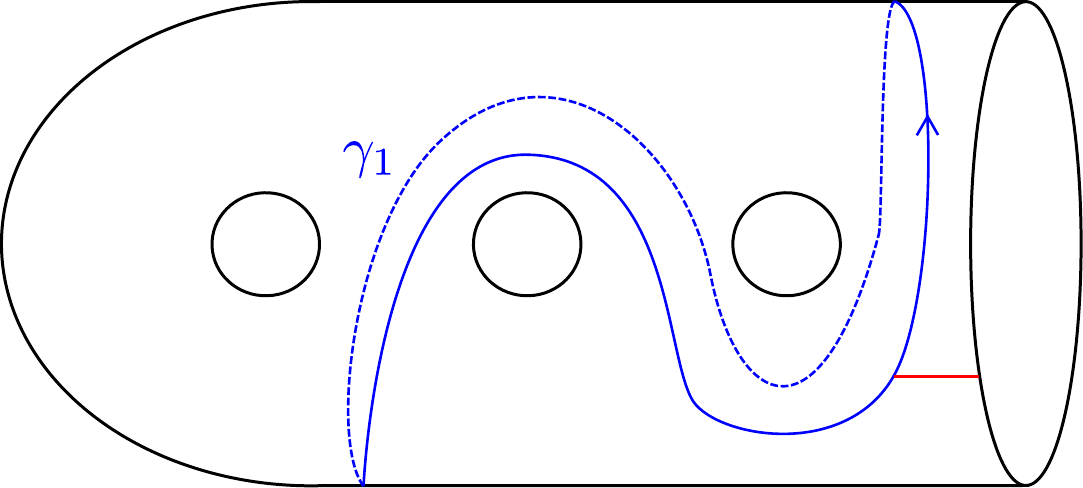} \qquad \qquad  \includegraphics[scale=0.5]{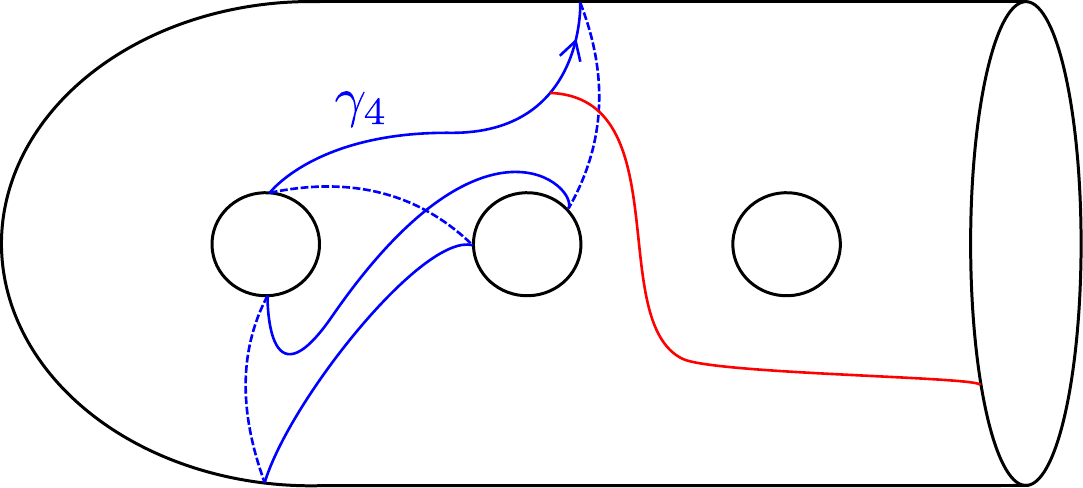}   \\[0.5cm]
  \includegraphics[scale=0.5]{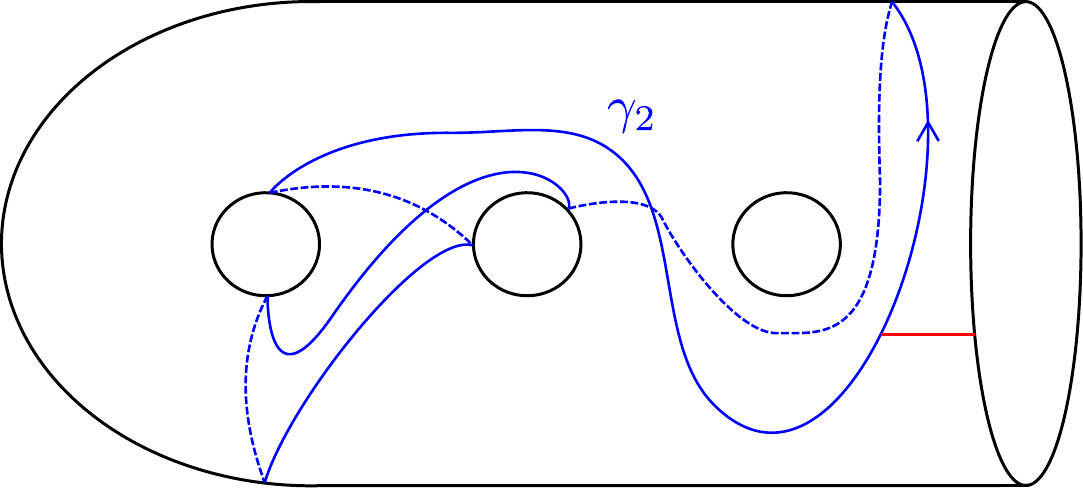}  \qquad \qquad \includegraphics[scale=0.5]{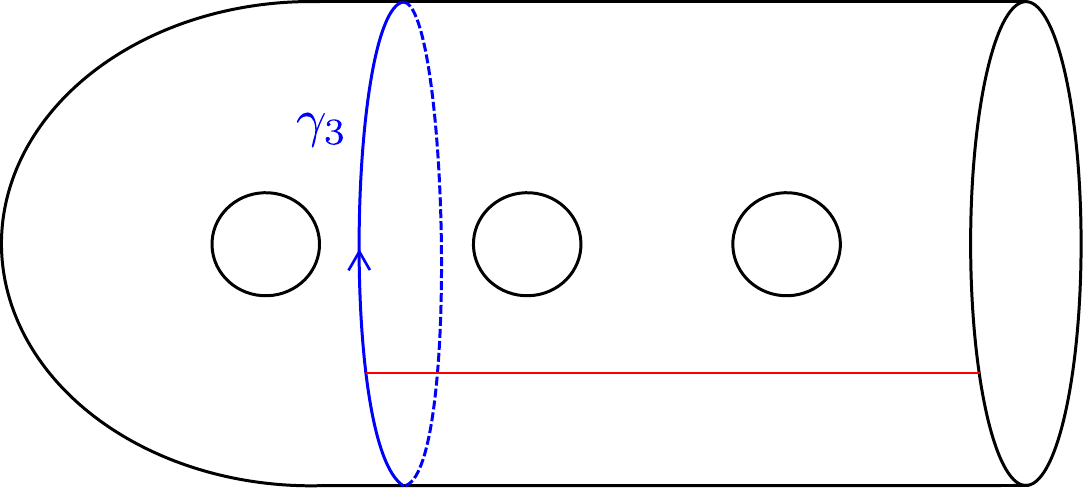} 
\caption{The separating  simple closed curves $\gamma_1, \gamma_2, \gamma_3$ and  $\gamma_4$ in $\Sigma$} \label{fig:BSCC}
\end{figure}

\begin{remark}
The above elements $i\in \calI$ and $k\in \calK$ can be alternatively described as products of ``commutators of simply intersecting pairs'',
which participate to Putman's infinite presentation of the Torelli group \cite{Put09}.
Recall that a \emph{commutator of simply intersecting pair} is a an element 
$[T_c,T_d]$ where $(c,d)$ is a pair of simple closed curves  meeting at two points such that $\omega(c,d)=0$.
Then it can be checked that
$$
i= \big[T_d,T_c^{-1}\big]
$$ 
where the curves  $c=c_2^-$ and $d$ are shown below:
$$
\includegraphics[scale=0.5]{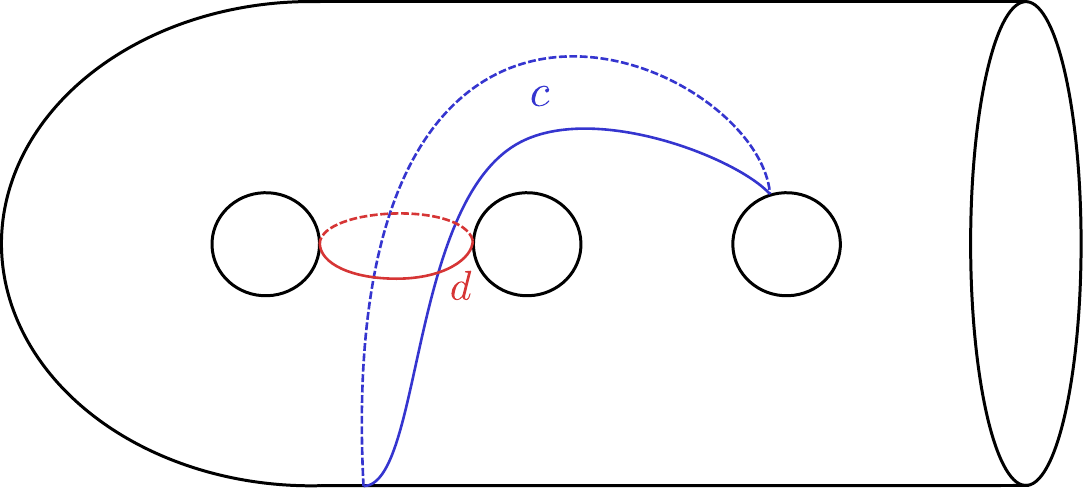}
$$
Similary, it can be verified that
$$
k= \big[T_e,T_d\big] \,\big[T_f^{-1}, T_d\big]
$$
where the curves  $e=\gamma_1$  and $f=\gamma_3$ are shown below:
$$
\includegraphics[scale=0.5]{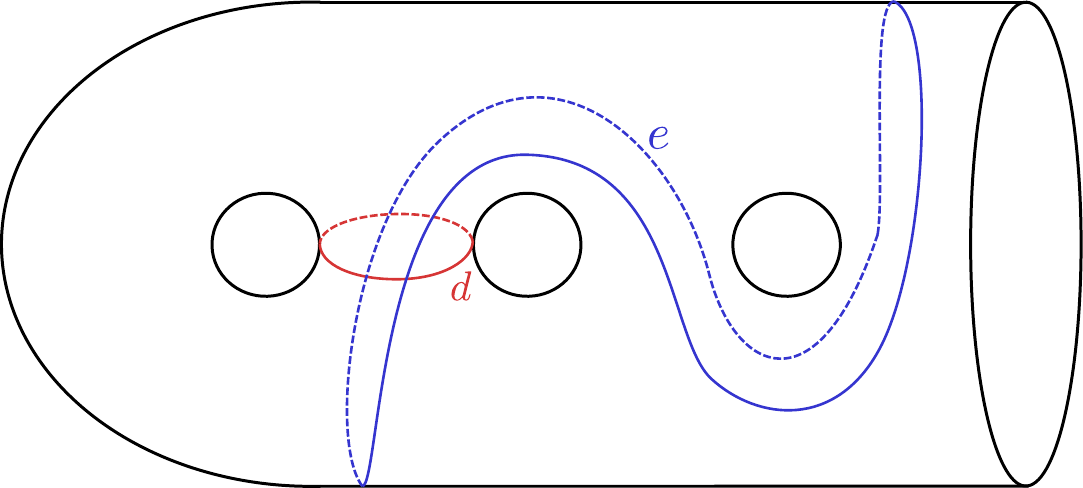} \qquad \includegraphics[scale=0.5]{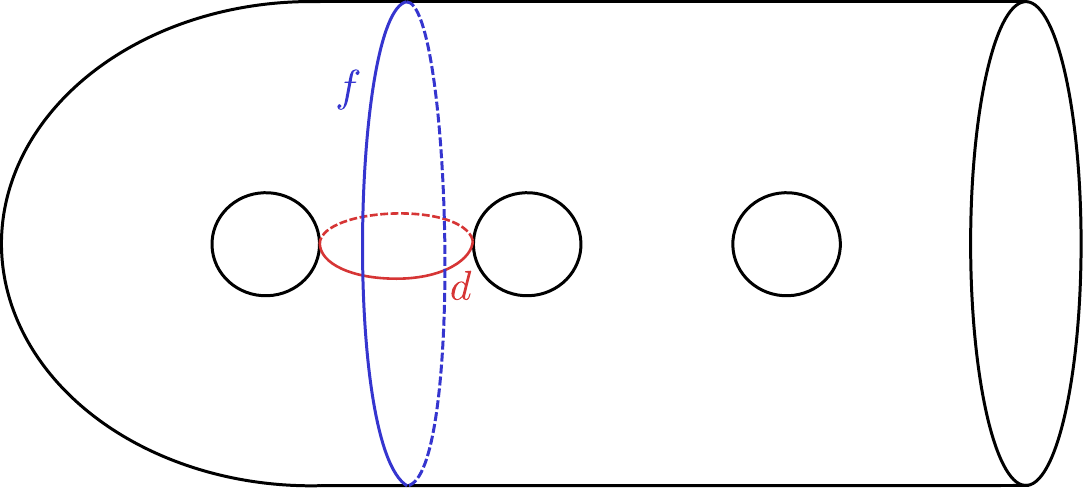}
$$ 

\up \hfill $\blacksquare$
\end{remark}

By construction, $\varphi = [i,k]$ belongs to $[\calI, \calK]$ and so to $\calM[3]$. 
Hence we can consider $\tau_3(\varphi)$,  the value of which can be deduced from the values of $\tau_1(i)$ and $\tau_2(k)$.
To compute $\tau_1(i)$, we use Johnson's formula \cite[Lemma 4.B]{Jo80a}   for a bounding pair map   
(or, alternatively, we can use \eqref{eq:r_1_BP} below):
$$
\tau_1\big(T_{c_1^{-}}^{-1}\,  T_{c_1^+} \big) =   -     a_3 \wedge a_1  \wedge (b_1 - a_2),
\qquad
\tau_1\big(T_{c_2^{-}}^{-1}\,  T_{c_2^+} \big) =  -      a_3 \wedge a_1 \wedge b_1
$$
Hence
\begin{equation}
\label{eq:tau1(i)}
\tau_1(i) =    -  a_3 \wedge a_1  \wedge (b_1 - a_2)    +  a_3 \wedge a_1 \wedge b_1 
=       a_1 \wedge a_2 \wedge a_3 =  \ltritree{a_2}{a_1}{a_3}.
\end{equation}

To compute now $\tau_2(k)$, we use Morita's formula \cite[Prop$.$ 1.1]{Mor89} for a   separating twist    
(or, alternatively, we can specialize formula \eqref{eq:r(DT)} below in degree $2$):
$$
\tau_2(T_{\gamma_1}) = \frac{1}{2}  (a_1 \wedge b_1 + a_3 \wedge b_3) \trait (a_1 \wedge b_1 + a_3 \wedge b_3), 
\quad  \tau_2(T_{\gamma_4}) = \frac{1}{2}  (a_1 \wedge (b_1-a_2)  ) \trait (a_1 \wedge (b_1-a_2) ) 
$$
$$
 \tau_2(T_{\gamma_2})  =  \frac{1}{2}  (a_1 \wedge (b_1-a_2)  + a_3 \wedge b_3) \trait (a_1 \wedge (b_1-a_2) + a_3 \wedge b_3),
\quad  \tau_2(T_{\gamma_3}) =  \frac{1}{2}  (a_1 \wedge b_1 ) \trait (a_1 \wedge b_1 )
 $$
Hence
\begin{equation} \label{eq:tau_2(k)}
\tau_2(k) = \frac{1}{2} \ltree{b_3}{a_3}{b_3}{a_3} + \ltree{b_3}{a_3}{b_1}{a_1} 
-  \frac{1}{2} \ltree{b_3}{a_3}{b_3}{a_3}   -   \ltree{b_3}{a_3}{b_1-a_2}{a_1}  = \ltree{b_3}{a_3}{a_2}{a_1}.
\end{equation}

We deduce that
\begin{equation} \label{eq:tau3}
\tau_3(\varphi) = \left[\tau_1(i), \tau_2(k)\right] = \left[ \ltritree{a_2}{a_1}{a_3}, \ltree{b_3}{a_3}{a_2}{a_1} \right]
=  \lfivetree{a_1}{a_2}{a_3}{a_2}{a_1} =0
\end{equation}
where the last identity follows from the AS relation. We conclude that $\varphi \in \calM[4]$.

\subsubsection{Beginning of the computation of $R(\varphi)$}

Since $\varphi$ belongs to $\calM[4]$, 
$R(\varphi)$ is simply the reduction of $\tau_4(\varphi)$ modulo $1$.
To compute this congruence class, we will use the infinitesimal Dehn--Nielsen representation $r^\theta$ described in \S \ref{subsec:infinitesimal}. We have
$$
 r^\theta(\varphi) =  r^\theta([i,k]) =  \big[r^\theta(i),r^\theta(k)\big]_\star
$$
where $[-,-]_\star$ denotes the commutator for the BCH product $\star$ on $\calT(H^\Q)$.
The latter is explicitly given by
$$
[u,v]_\star = u\star v \star (-u) \star (-v)  = [u,v] + \frac{1}{2} [u,[u,v]] -  \frac{1}{2} [v,[v,u]] + (\hbox{\small Lie brackets of length $\geq 4$} )
$$
for any $u,v\in \calT(H^\Q)$.
Since $u:=r^\theta(i)$ and $v:=r^\theta(k)$ start in degrees $1$ and $2$, respectively, we deduce that
\begin{eqnarray*}
r^\theta_4(\varphi) &=&  \big[r_1^\theta(i),r_3^\theta(k)\big] + \big[r_2^\theta(i),r_2^\theta(k)\big] 
+ \frac{1}{2}\big[r_1^\theta(i),\big [r_1^\theta(i),r_2^\theta(k)\big]\big] \\
&=&  \big[\tau_1(i),r_3^\theta(k)\big] + \big[r_2^\theta(i),\tau_2(k)\big] 
+ \frac{1}{2}\big[\tau_1(i),\big [\tau_1(i),\tau_2(k)\big]\big] 
\end{eqnarray*}
and, using \eqref{eq:tau3}, we obtain
\begin{equation} \label{eq:r4}
r^\theta_4(\varphi)  = \big[\tau_1(i),r_3^\theta(k)\big] + \big[r_2^\theta(i),\tau_2(k)\big].
\end{equation}
Hence, we are led to compute $r_2^\theta(i)$ and $r_3^\theta(k)$
and, for that, we will use the logansion given in Example \ref{ex:logansion}.
In fact, since we are only interested in the value of $r^\theta_4(\varphi)$ modulo $1$, 
we will only determine the classes of  $r_2^\theta(i)$ and $r_3^\theta(k)$ modulo   $\Z$-linear    combinations of trees.

\subsubsection{Computation of  $r_3^\theta(k)$}

The map $r^\theta_{[2,3]}$ is a group homomorphism on $\calK=\calM[2]$ (see \cite{Massuyeau}; this follows from Lemma \ref{lem:truncation} with $k:=2$). In particular, we have
$$
r_3^\theta(k) = r_3^\theta(T_{\gamma_1}) -  r_3^\theta(T_{\gamma_2}) - r_3^\theta(T_{\gamma_3})  + r_3^\theta(T_{\gamma_4}).
$$
Next, we shall compute   $r_3^\theta(T_{\gamma_i})$    for each $i\in\{1,2,3,4\}$ using the following formula,
which is deduced in \cite[eq. (5.4)]{KM} from the main result of \cite{KK}:

\begin{quote}
{\it For any   separating    simple closed curve $\gamma$ in $\Sigma$, and any representative $\tilde \gamma \in \pi$, we~have}
\begin{equation} \label{eq:r(DT)}
r^\theta(T_\gamma) = \frac 1 2 \theta(\tilde \gamma) \trait\theta(\tilde \gamma) \in \hat \calT(H^\Q).
\end{equation}
\end{quote}
We orient and base the curves $\gamma_i$ as shown in Figure \ref{fig:BSCC} to get the following lifts:
\begin{equation} \label{eq:gamma13}
\tilde \gamma_1 = [\alpha_3,\beta_3^{-1}] \beta_2 [\alpha_1,\beta_1^{-1}] \beta_2^{-1},  
 \qquad   \tilde \gamma_4 = [\alpha_2\beta_2\beta_1^{-1}, \alpha_1^{-1}][\alpha_1^{-1},\beta_2] ,
\end{equation}
\begin{equation} \label{eq:gamma24}
 \tilde \gamma_2 =  [\alpha_3,\beta_3^{-1}] [\alpha_2\beta_2\beta_1^{-1}, \alpha_1^{-1}]][\alpha_1^{-1},\beta_2], 
 \qquad  \tilde \gamma_3 = [\alpha_1,\beta_1^{-1}] .
\end{equation}
Then, a direct computation gives
\begin{eqnarray*}
\theta(\tilde \gamma_1) &=&-\treetwo{a_1}{b_1}-\treetwo{a_3}{b_3}   + \treethree{a_1}{b_1}{b_2}  + (\deg \geq 4)\\
\theta(\tilde \gamma_2) &=& -\treetwo{a_1}{b_1}+\treetwo{a_1}{a_2}-\treetwo{a_3}{b_3} 
 +\treethree{b_1}{a_1}{a_1}-\frac{1}{2}\treethree{a_2}{a_1}{a_1} -\frac{1}{2}\treethree{a_1}{a_2}{a_2} \\
&&  + \frac{1}{2}\treethree{a_1}{b_1}{a_2}  + \treethree{a_1}{b_1}{b_2} -\frac{1}{2}\treethree{b_2}{a_2}{a_1}  -\treethree{a_1}{b_2}{a_2} + (\deg \geq 4) \\
\theta(\tilde \gamma_3) &=& -\treetwo{a_1}{b_1} + (\deg \geq 4) \\
\theta(\tilde \gamma_4) &=&  -\treetwo{a_1}{b_1}+\treetwo{a_1}{a_2} + \treethree{b_1}{a_1}{a_1}-\frac{1}{2}\treethree{a_2}{a_1}{a_1} -\frac{1}{2}\treethree{a_1}{a_2}{a_2} \\
& & +\frac{1}{2}\treethree{a_1}{b_1}{a_2}  + \treethree{a_1}{b_1}{b_2}   -\frac{1}{2}\treethree{b_2}{a_2}{a_1}
-\treethree{a_1}{b_2}{a_2} + (\deg \geq 4)
\end{eqnarray*}
This computation can be performed either by hand or by using the SageMath code given in  Appendix \ref{sec:code}.
For instance, $\theta(\tilde \gamma_4)$ is computed and displayed with the following command lines:
\begin{lstlisting}[language=Python]
	g4=theta(comm('a2+b2+b1-','a1-')+comm('a1-','b2+'))
	display(g4)
\end{lstlisting}
We observe  that $r_3^\theta(T_{\gamma_3})$ is trivial, since $\theta_3(\tilde \gamma_3)=0$, 
and that  $r_3^\theta(T_{\gamma_1})$ is a   $\Z$-linear    combination of trees,
since  the above formula for $\theta_3(\tilde \gamma_1)=0$  shows no fraction.
Besides, we remark that $\theta_2(\tilde \gamma_2)= \theta_2(\tilde \gamma_4) - [a_3,b_3]$ and $\theta_3(\tilde \gamma_2)= \theta_3(\tilde \gamma_4)$;
therefore we have
\begin{eqnarray}
\notag r_3^\theta(k)  &\equiv & -  r_3^\theta(T_{\gamma_2})  + r_3^\theta(T_{\gamma_4}) \\
\notag &=& -  \theta_2(\tilde \gamma_2) \trait\theta_3(\tilde \gamma_2) 
+ \theta_2(\tilde \gamma_4) \trait \theta_3(\tilde \gamma_4) \\
\notag &=&  [a_3,b_3] \trait \theta_3(\tilde \gamma_2) \\
\label{eq:r_3(k)} & \equiv &  \frac{1}{2}  [a_3,b_3] \trait\big( [[a_2,a_1],a_1]  + [[a_1,a_2],a_2] + [[b_1,a_1],a_2]  + [[b_2,a_2],a_1]  \big)
\end{eqnarray}
where the symbol ``$\equiv$'' stands for a congruence modulo   $\Z$-linear    combinations of trees.

\subsubsection{Computation of  $r_2^\theta(i)$}

Lemma \ref{lem:truncation} with $k:=1$ implies that
\begin{equation} \label{eq:r_2}
r_2^\theta(i) = r_2^\theta(P_1) - r_2^\theta(P_2) - \frac{1}{2}[\tau_1(P_1), \tau_1(P_2)]
\quad \hbox{where } P_i := T_{c_i^{-}}^{-1}\,  T_{c_i^+}.
\end{equation}
We  need the following formulas for   bounding pair maps.   

\begin{proposition} \label{prop:BP}
Let $\gamma$ and $\delta$ be elements of $\pi$ representing two simple closed curves   that cobound a subsurface of $\Sigma$,   
and set $c := \gamma^{-1} \delta$. Then  we have
\begin{equation} \label{eq:r_1_BP}
r_1^\theta(T_\gamma T_\delta^{-1}) = -  {[\gamma]} \trait{[c]} 
\end{equation} 
and
\begin{equation} \label{eq:r_2_BP}
r_2^\theta(T_\gamma T_\delta^{-1}) = - \frac{1}{2} {[c]}\trait{[c]}  - {\theta_2(\gamma)}\trait{[c]}  - {[\gamma]}\trait{\theta_3(c)}  
\end{equation} 
where 
$$
[\gamma]\in \frac{\pi}{\Gamma_2\pi} \simeq H
\quad \hbox{and} \quad [c] \in \frac{\Gamma_2 \pi }{\Gamma_3 \pi} \simeq \frakL_2
$$ 
are the leading terms of 
$\theta(\gamma) =[\gamma] + \theta_2(\gamma) +\!\cdots \in \hat\frakL^\Q$
 and $\theta(c) =[c] + \theta_3(c) +\!\cdots \in \hat\frakL^\Q$, respectively.
\end{proposition}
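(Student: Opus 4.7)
My plan is to derive \eqref{eq:r_1_BP} and \eqref{eq:r_2_BP} by extending the formula \eqref{eq:r(DT)} from separating twists to arbitrary simple closed curves (in the style of Kawazumi--Kuno \cite{KK,KM}), combining $T_\gamma$ with $T_\delta^{-1}$ via the conjugation formula \eqref{eq:star_tilde}, and reading off the degree-$1$ and degree-$2$ parts.

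Set $G := \theta(\tilde\gamma)$ and $D := \theta(\tilde\delta)$. Since $\gamma$ and $\delta$ cobound a subsurface they are homologous, so $G_1 = D_1 = [\gamma]$; this is exactly the condition that places $T_\gamma T_\delta^{-1}$ in the Torelli group, so that $r^\theta(T_\gamma T_\delta^{-1}) = \tilde r^\theta(T_\gamma T_\delta^{-1})$. The identity $c = \gamma^{-1}\delta$ translates via $\theta$ into $\theta(c) = (-G)\star D$ in $\hat\frakL^{\Q}$; expanding degree-by-degree using the BCH formula yields $(D-G)_1 = 0$, $(D-G)_2 = [c]$, and $(D-G)_3 = \theta_3(c) + \frac{1}{2}[[\gamma],[c]]$.

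Applying \eqref{eq:star_tilde} and using the symmetry of $\trait$ (so that $G \trait G - D \trait D = (G+D) \trait (G-D)$), the tree-degree $1$ contribution to $\frac{1}{2}(G \trait G - D \trait D)$ is $\frac{1}{2} \cdot 2[\gamma] \trait (-[c]) = -[\gamma] \trait [c]$, which is \eqref{eq:r_1_BP}. In tree-degree $2$, the same expansion produces the three terms $-\frac{1}{2}[c]\trait[c]$, $-\theta_2(\gamma) \trait [c]$ and $-[\gamma] \trait \theta_3(c)$ announced in \eqref{eq:r_2_BP}, along with an extraneous summand $-\frac{1}{2}[\gamma] \trait [[\gamma],[c]]$ coming from $(D-G)_3$. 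This stray term must be cancelled by the combination of the quadratic BCH correction in \eqref{eq:star_tilde}, the action of the symplectic transvection $(T_\gamma)_*$ on $\tilde r^\theta(T_\delta^{-1})$, and the low-degree correction one must add to \eqref{eq:r(DT)} when the curve is non-separating.

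The main obstacle is precisely this last cancellation, which requires careful bookkeeping of the ``low-degree'' contributions that appear in the non-separating case and conspire to make \eqref{eq:r_2_BP} hold exactly. A cleaner alternative, and perhaps the one carried out in the paper, is to bypass treating $T_\gamma$ and $T_\delta$ individually: one uses Johnson's explicit formula for bounding-pair maps to compute the action of $T_\gamma T_\delta^{-1}$ directly on generators of $\pi$, then applies $\theta$ and extracts the associated element of $\hat\calT(H^\Q)$ via $(\eta^\Q)^{-1}$, reading off both formulas from the degree-$1$ and degree-$2$ parts of the resulting derivation.
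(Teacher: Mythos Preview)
Your overall strategy --- extend \eqref{eq:r(DT)} to non-separating curves and then combine $T_\gamma$ with $T_\delta^{-1}$ --- is exactly the paper's approach, but you miss the two observations that make the computation clean, and as a result you leave the argument unfinished.

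First, the combination step. You invoke \eqref{eq:star_tilde}, which drags in the transvection action $(T_\gamma)_*$ and BCH corrections that you then cannot track. The paper instead observes that the curves $\gamma$ and $\delta$ are disjoint (they cobound a subsurface), so $T_\gamma$ and $T_\delta$ \emph{commute}. Hence $\varrho^\theta(T_\gamma)$ and $\varrho^\theta(T_\delta)$ commute, their logarithms satisfy $[r^\theta(T_\gamma),r^\theta(T_\delta)]=0$, and the BCH product collapses to a plain sum:
\[
r^\theta(T_\gamma T_\delta^{-1}) \;=\; r^\theta(T_\gamma)+r^\theta(T_\delta^{-1})
\;=\; \tfrac12\,\theta(\gamma)\trait\theta(\gamma)\;-\;\tfrac12\,\theta(\delta)\trait\theta(\delta).
\]
This is the identity you were implicitly using when you wrote $\tfrac12(G\trait G - D\trait D)$, but you never justified it; commutativity is what does so, with no transvection action and no ``low-degree correction'' to the non-separating formula.

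Second, your ``stray term'' $-\tfrac12\,[\gamma]\trait[[\gamma],[c]]$ is not stray at all: it vanishes by the AS relation. In the tree picture this is a diagram with two leaves coloured $[\gamma]$ meeting at the same trivalent vertex; swapping them shows the diagram equals its own negative. The paper states this as $\theta(\gamma)\trait[\theta(\gamma),\theta(c)]=0$, which kills exactly the term you were worried about. With these two points in hand your degree-by-degree expansion is already the complete proof; the alternative route via Johnson's explicit formula that you suggest is not needed and is not what the paper does.
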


\begin{proof}
There is also a version of \eqref{eq:r(DT)} for the Dehn twist $T_\gamma$ along \emph{any} simple closed curve $\gamma$.
Of course, $T_\gamma$ does not belong to the Torelli group if $\gamma$ is not   separating   , but the automorphism 
$\varrho^\theta(T_\gamma)$ of $\hat\frakL^\Q$ still has a logarithm and we can consider
$$
r^\theta(T_\gamma):= (\eta^\Q)^{-1}\big( \log \varrho^\theta(T_\gamma) \big) \  \in \hat\calT(H^\Q)
$$
by allowing tree diagrams to have no trivalent vertices
and derivations to be of degree $0$.
(Hence the degree $0$ part of $\calT(H^\Q)$ is canonically isomorphic to $S^2(H^\Q)$.)
This generalization of formula~\eqref{eq:r(DT)} is proved exactly as in \cite[eq. (5.4)]{KM} using the main result of \cite{KK}.

Since $T_\gamma$ and $T_\delta$ commute, 
the automorphisms $\varrho^\theta(T_\gamma)$ and $\varrho^\theta(T_\delta)$ commute,
and so do their logarithms.
Hence we have $[r^\theta(T_\gamma), r^\theta(T_\delta)]=0$, so that the BCH formula reduces to
$
r^\theta(T_\gamma T_\delta^{-1}) = r^\theta(T_\gamma) + r^\theta(T_\delta^{-1}),
$ 
and we deduce that
$$
r^\theta(T_\gamma T_\delta^{-1}) = 
 \frac 1 2 \theta( \gamma) \trait\theta( \gamma) -  \frac 1 2 \theta(\delta) \trait\theta(\delta).
$$
Besides, we have
$$
\theta(\delta) = \theta(\gamma) \star \theta(c) =  \theta(\gamma) +  \theta(c) + \frac 1 2 \big[ \theta(\gamma), \theta(c) \big]+  (\deg \geq 4)
$$
since $\theta(c)$ starts in degree $2$. Thus we obtain 
$$
r^\theta(T_\gamma T_\delta^{-1}) =   - \frac{1}{2} \theta(c) \trait \theta(c)  - \theta(\gamma) \trait \theta(c)  + (\deg\geq 3)
$$
since we have $ \theta(\gamma) \trait \big[ \theta(\gamma), \theta(c) \big] =0$ by the AS relation.
In particular, we get
$$
r^\theta_1 (T_\gamma T_\delta^{-1}) = - \theta_1(\gamma) \trait \theta_2(c)
$$
and 
$$
r^\theta_2 (T_\gamma T_\delta^{-1}) =    - \frac{1}{2} \theta_2(c) \trait \theta_2(c)   -  \theta_2(\gamma) \trait \theta_2(c) -  \theta_1(\gamma) \trait \theta_3(c) .
$$

\up
\end{proof}

We now apply \eqref{eq:r_2_BP}  to compute $r_2^\theta(P_1)$ and  $r_2^\theta(P_2)$ modulo   $\Z$-linear    combinations of trees:
\begin{enumerate}
\item For $P_1$, we consider $\gamma:=\alpha_3$ and 
$c:=  \beta_3 \tilde \gamma_4^{-1} \beta_3^{-1}$ where  $\tilde \gamma_4$ has been defined at \eqref{eq:gamma24}. 
Then, a direct computation (using the SageMath code of Appendix \ref{sec:code}) gives
\begin{eqnarray*}
\theta(\gamma) &=& \stick{a_3}  -\frac{1}{2} \treetwo{a_3}{b_3} - \frac{1}{2} \treethree{a_1}{b_1}{a_3} 
-\frac{1}{2}  \treethree{a_2}{b_2}{a_3} + \frac{1}{12}  \treethree{a_3}{b_3}{b_3} +(\deg \geq 4),\\
\theta(c)  &=& \treetwo{a_1}{b_1} - \treetwo{a_1}{a_2}  -\frac{1}{2}\treethree{a_1}{a_2}{a_1}+\treethree{a_1}{b_1}{a_1}
+\frac{1}{2}\treethree{a_1}{a_2}{a_2}+\frac{1}{2}\treethree{a_2}{b_2}{a_1}   -\frac{1}{2}\treethree{a_1}{b_1}{a_2}\\
&& - \treethree{a_1}{b_1}{x} + \treethree{a_1}{a_2}{x}  +(\deg \geq 4) \quad \hbox{ with } x:=b_2+b_3. 
\end{eqnarray*}
Hence we get 
\begin{eqnarray*}
  r_2^\theta(P_1) & = &  
 \Bigg(-\frac{1}{2}\ltree{b_1}{a_1}{b_1}{a_1}-\frac{1}{2} \ltree{a_2}{a_1}{a_2}{a_1} + \ltree{a_2}{a_1}{b_1}{a_1}\Bigg) 
+ \Bigg(\frac{1}{2} \ltree{b_3}{a_3}{b_1}{a_1} - \frac{1}{2} \ltree{b_3}{a_3}{a_2}{a_1}\Bigg)  \\
&&  + \Bigg( \frac{1}{2} \ltree{a_2}{a_1}{a_3}{a_1}- \ltree{b_1}{a_1}{a_3}{a_1} -\frac{1}{2}\ltree{a_2}{a_1}{a_3}{a_2} -\frac{1}{2} \ltree{b_2}{a_2}{a_3}{a_1}  \\
&&  \quad +\frac{1}{2} \ltree{b_1}{a_1}{a_3}{a_2} +\ltree{b_1}{a_1}{a_3}{x}-\ltree{a_2}{a_1}{a_3}{x}\Bigg) \\
&\equiv& \frac{1}{2} \Bigg( \ltree{a_1}{a_2}{a_3}{a_1+a_2}   + \ltree{a_1}{b_1}{a_3}{a_2}  +  \ltree{a_2}{b_2}{a_3}{a_1} \\
&&+  \ltree{b_3}{a_3}{b_1+a_2}{a_1} + \ltree{a_1}{b_1}{a_1}{b_1} +  \ltree{a_1}{a_2}{a_1}{a_2} \Bigg)
\end{eqnarray*}
where the symbol ``$\equiv$'' stands for a congruence modulo   $\Z$-linear    combinations of trees.
\item For $P_2$, we consider now $\gamma:=\alpha_3$ and 
$c:= (\beta_3\beta_2) \tilde\gamma_3^{-1} (\beta_3\beta_2)^{-1}$ where  $\tilde \gamma_3$ has been defined at \eqref{eq:gamma13}. 
Then, a direct computation (using the SageMath code of Appendix \ref{sec:code}) gives
\begin{eqnarray*}
\theta(\gamma) &=& \stick{a_3}  -\frac{1}{2} \treetwo{a_3}{b_3} - \frac{1}{2} \treethree{a_1}{b_1}{a_3} 
-\frac{1}{2}  \treethree{a_2}{b_2}{a_3} + \frac{1}{12}  \treethree{a_3}{b_3}{b_3} +(\deg \geq 4),\\
\theta(c)  &=& \treetwo{a_1}{b_1} - \treethree{a_1}{b_1}{x}  +(\deg \geq 4) \quad \hbox{ with } x:=b_2+b_3.
\end{eqnarray*}
Hence we get 
\begin{eqnarray*}
r_2^\theta(P_2) &=&  -\frac{1}{2} \ltree{b_1}{a_1}{b_1}{a_1}  + \frac{1}{2} \ltree{b_3}{a_3}{b_1}{a_1} +  \ltree{b_1}{a_1}{a_3}{x}   \\
&\equiv&   \frac{1}{2} \Bigg( \ltree{b_3}{a_3}{b_1}{a_1}  +  \ltree{b_1}{a_1}{b_1}{a_1}  \Bigg).
\end{eqnarray*}
\end{enumerate}

We now insert into \eqref{eq:r_2} the above values of $r_2^\theta(P_i)$ to get 
\begin{eqnarray}
\notag r_2^\theta(i) &\equiv&  \frac{1}{2} \Bigg( \ltree{a_1}{a_2}{a_3}{a_1+a_2}   + \ltree{a_1}{b_1}{a_3}{a_2}  +  \ltree{a_2}{b_2}{a_3}{a_1} 
+  \ltree{b_3}{a_3}{a_2}{a_1}  +  \ltree{a_1}{a_2}{a_1}{a_2} \Bigg)\\
\notag && - \frac{1}{2} \Bigg[\ltritree{a_3}{a_1}{b_1} - \ltritree{a_3}{a_1}{a_2}, \ltritree{a_3}{a_1}{b_1}\Bigg] \\
\label{eq:r2i} &\equiv & \frac{1}{2} \Bigg( \ltree{a_1}{a_2}{a_3}{a_1+a_2}   + \ltree{a_1}{b_1}{a_3}{a_2}  +  \ltree{a_2}{b_2+a_3}{a_3}{a_1} 
+  \ltree{b_3}{a_3}{a_2}{a_1}  +  \ltree{a_1}{a_2}{a_1}{a_2} \Bigg)
\end{eqnarray}

\subsubsection{End of the computation of $R(\varphi)$}

It follows from \eqref{eq:tau1(i)} and \eqref{eq:r_3(k)} that
\begin{eqnarray*}
\big[\tau_1(i),r_3^\theta(k)\big]  & \equiv &  \frac{1}{2} \Bigg[ \ltritree{a_2}{a_1}{a_3},  
\lfivetree{a_3}{b_3}{a_1}{a_1}{a_2}+ \lfivetree{a_3}{b_3}{a_2}{a_2}{a_1} + \lfivetree{a_3}{b_3}{a_2}{a_1}{b_1} + \lfivetree{a_3}{b_3}{a_1}{a_2}{b_2}\Bigg]\\
& \equiv & \frac{1}{2} \Bigg( \lsixtree{a_1}{a_2}{a_3}{a_1}{a_1}{a_2} +  \lsixtree{a_1}{a_2}{a_3}{a_2}{a_2}{a_1}
+  \lsixtree{a_1}{a_2}{a_3}{a_2}{a_1}{b_1} +  \lsixtree{a_1}{a_2}{a_3}{a_1}{a_2}{b_2}\\
&&+  \lsixtree{a_3}{b_3}{a_2}{a_1}{a_2}{a_3} + \lsixtree{a_3}{b_3}{a_1}{a_2}{a_3}{a_1}  \Bigg).
\end{eqnarray*}
Besides, it follows from \eqref{eq:tau_2(k)} and \eqref{eq:r2i} that
\begin{eqnarray*}
\big[r_2^\theta(i), \tau_2(k)\big]  &\equiv&  \frac{1}{2}  \Bigg[  \ltree{a_1}{a_2}{a_3}{a_1+a_2}   + \ltree{a_1}{b_1}{a_3}{a_2}  +  \ltree{a_2}{b_2+a_3}{a_3}{a_1} 
   , \ltree{b_3}{a_3}{a_2}{a_1}\Bigg]  \\
   & \equiv & \frac{1}{2} \Bigg( \lsixtree{a_2}{a_1}{a_1}{a_3}{a_1}{a_2} + \lsixtree{a_2}{a_1}{a_2}{a_3}{a_1}{a_2} 
   + \lsixtree{b_1}{a_1}{a_2}{a_3}{a_1}{a_2} + \lsixtree{a_3}{b_3}{a_2}{a_1}{a_2}{a_3} \\
   &&   + \lsixtree{b_2}{a_2}{a_1}{a_3}{a_1}{a_2}  + \lsixtree{a_3}{a_2}{a_1}{a_3}{a_1}{a_2} 
   + \lsixtree{a_1}{a_2}{a_3}{a_2}{a_1}{a_3}  +   \lsixtree{b_3}{a_3}{a_1}{a_2}{a_1}{a_3} \Bigg) .
\end{eqnarray*}
We deduce from \eqref{eq:r4} that 
$$
r_4^\theta(\varphi) \equiv
 \frac{1}{2} \Bigg(   \lsixtree{a_3}{a_2}{a_1}{a_3}{a_1}{a_2}     + \lsixtree{a_1}{a_2}{a_3}{a_2}{a_1}{a_3}   \Bigg) 
\equiv   \frac{1}{2} \lsixtree{a_1}{a_2}{a_3}{a_3}{a_1}{a_2}
$$
where the last congruence follows from the IHX relation. Thus we have proved \eqref{eq:dos}.

\subsection{Another proof of Theorem B} \label{subsec:another_proof_B}

Our second proof of Theorem B  is based  on $3$-dimensional topology and,   to be more specific, on the clasper calculus in homology cylinders.   
It is   inspired by    the arguments of Nozaki, Sato and Suzuki \cite{NSS} to prove Theorem A in the closed case.
(See \cite[\S 5.4]{NSS} in connection to this.)
But,   in contrast to \cite{NSS}, our arguments do not involve any computation of the LMO homomorphism.   

To be more specific, still assuming that the surface $\Sigma$ has genus $g\geq 3$,
we will show here the existence of an element $\varphi' \in \Gamma_3 \calI$ of the form
$$
\varphi'=[\varphi_1,[\varphi_2,\varphi_3]]
$$
where $\varphi_1,\varphi_2,\varphi_3\in \calI$ will be required to satisfy certain properties. It will follow from these properties that $\varphi'$ belongs
to ${\calM}[4]$ and  satisfies
\begin{eqnarray}
\label{eq:dos'} R(\varphi')&=& \frac{1}{2} \lsixtree{a_3}{a_2}{a_1}{a_1}{a_2}{a_3} \mod 1.
\end{eqnarray} 
Then, Theorem B is proved with $\varphi'$ exactly as we did in \S \ref{subsec:proof_B} with the first element $\varphi$.

Let $\calC := \calC(\Sigma)$ be the monoid of \emph{homology cobordisms} over $\Sigma$:
the reader is refered to \cite{HM} for a survey.
Recall that  the mapping class group  embeds into this monoid via the \emph{mapping cylinder} construction
$$
\mathbf{c} : \calM \longrightarrow \calC
$$
and that most of the constructions outlined in \S \ref{subsec:infinitesimal}, \S \ref{subsec:truncations} for $\calM$ can be extended to $\calC$.
Thus, we have at our disposal the Johnson filtration 
\begin{equation} \label{eq:Johnson_C}
\calC \supset \underbrace{\calC[1]}_{\calIC\, :=} \supset \underbrace{\calC[2]}_{ \calKC\, :=} \supset \cdots \supset \calC[k] \supset \calC[k+1] \supset \cdots
\end{equation}
and, for every $k\geq 1$, the $k$-th Johnson homomorphism
$
\tau_k: \calC[k] \to \mathsf{D}_k(H)
$
which, by work of Garoufalidis and Levine \cite{GL}, is  surjective.
The submonoid $\calC[1] = \calIC$ consists of \emph{homology cylinders} over $\Sigma$, 
and the infinitesimal Dehn--Nielsen representation $r^\theta: \calIC \to \hat\calT(H^\Q)$ is defined on this monoid.
Besides, the map  $R_\circ : \calK \to \calT_4(H^\Q)/ \calT_4(H)$ that has been defined in Remark~\ref{rem:R_0} as a variation of $R$
extends to $\calKC$ by the same formula \eqref{eq:R_0},
and the same arguments   show that the resulting map
$$
R_\circ: \calKC \longrightarrow \frac{\calT_4(H^\Q)}{\calT_4(H)} 
$$
is a monoid homomorphism. 

To go further, we shall need the $Y_k$-equivalence relations on $\calC$ 
that have been introduced by Goussarov \cite{Goussarov} and Habiro  \cite{Habiro}. 
Recall that two cobordisms $M,M'\in \calC$ are  \emph{$Y_k$-equivalent} if there is a (compact, connected, oriented) surface $S \subset \hbox{int}(M)$
with one boundary component, and an element $s\in \Gamma_k \calI(S)$, such that $M'$ is obtained by cutting open $M$ along $S$
and gluing it back with $s$. We need the following facts about these equivalence relations 
(see the survey paper \cite{HM}, and references therein):

\begin{itemize}
\item For every $k\geq 1$, the $Y_k$-equivalence relation is generated by surgeries along connected graph claspers 
with $k$ nodes (using the terminology of \cite{Habiro}).
\item Denoting by $Y_k \calIC$ the $Y_k$-equivalence class of the trivial cylinder $U:=\Sigma \times [-1,+1]$,
we obtain a decreasing sequence of submonoids
$$
\calIC = Y_1 \calIC \supset Y_2 \calIC \supset \cdots \supset Y_k \calIC \supset Y_{k+1} \calIC \supset \cdots
$$
which is called the \emph{$Y$-filtration} and is smaller than the Johnson filtration \eqref{eq:Johnson_C}.
\item For every $k\geq 1$, the quotient monoid $\calIC/Y_k$ is a group and, for all $\ell,\ell'\geq 1$, we have
$$
\Big[ \frac{Y_\ell\calIC}{Y_k} ,  \frac{Y_{\ell'}\calIC}{Y_k}\Big] \subset \frac{Y_{\ell + \ell'} \calIC}{Y_k}.
$$
\item The associated graded of the $Y$-filtration, i.e$.$ the direct sum of abelian groups
$$
\operatorname{Gr}^Y \calIC := \bigoplus_{k=1}^{+\infty}  \frac{Y_{k} \calIC}{Y_{k+1}},
$$
is a graded Lie ring whose Lie bracket is induced by group commutators.                                       
\item The graded Lie ring $\operatorname{Gr}^Y \calIC$ can be ``approximated'' by a space of \emph{Jacobi diagrams} in the following way.
A \emph{Jacobi diagram} is a finite and  unitrivalent graph, whose trivalent vertices are oriented
and whose univalent vertices are colored by the finite set
$$
\{1^+,\dots,g^+\} \cup \{1^-,\dots,g^-\}.
$$ 
The \emph{degree} of a Jacobi diagram is the number of its trivalent vertices.
Let $\calA^Y$ be the graded abelian group generated by Jacobi diagrams, 
subject to the AS and IHX relations as presented in \S \ref{subsec:trees}.
Equipped with the multiplication $\star$ defined by
$$
D \star E := \sum \Big( \begin{array}{c} \hbox{\small all possible ways of gluing some of the  $i^+$-vertices of $D$}\\
 \hbox{\small with some of the  $i^-$-vertices of $E$,  for all $i\in\{1,\dots,g\}$} \end{array}\Big),
$$
the graded abelian group  $\calA^Y$ is a graded ring. Furthermore,
equipped with the bracket $[D,E]_\star := D \star E-E \star D$,
the subgroup $\calA^{Y,c}$ of $\calA^Y$ spanned by connected Jacobi diagrams is a graded Lie ring. 
Then, there is a homomorphism of graded Lie rings \cite{GL,CHM}
\begin{equation} \label{eq:psi}
\psi: \calA^{Y,c} \longrightarrow \operatorname{Gr}^Y \calIC 
\end{equation}
defined by $\psi(D) := (-1)^{\chi(D)} \cdot \big( U_{\overline D}\!  \mod Y_{k+1}\big)$ for any connected Jacobi diagram $D$ of degree $k$,
where $\chi(D)$ is the Euler characteristic of $D$ and $\overline D$ is a graph clasper in the trivial cylinder $U$ ``realizing'' $D$:
in particular, every univalent vertex of $D$ is ``realized'' by a leaf of $\overline D$
which is a push-off (in the interior of $U$) of the framed curve $\alpha_i \subset \Sigma \times \{-1\}$ 
(resp. $\beta_i  \subset \Sigma \times \{+1\}$) if the color of that vertex is $i^-$ (resp. $i^+$).
\end{itemize}

We can now prove the following.

\begin{lemma} \label{lem:Y4}
The monoid homomorphism $R_\circ:  \calKC \to \calT_4(H^\Q)/ \calT_4(H)$ factorizes to
a group homomorphism $R_\circ:  \calKC/Y_4 \to \calT_4(H^\Q)/ \calT_4(H)$.
\end{lemma}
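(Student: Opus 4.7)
The strategy is to reduce the factorization claim to showing that $R_\circ$ vanishes on the submonoid $Y_4\calIC \subset \calKC$. This will suffice: since $\calIC/Y_4$ is a group and $Y_4\calIC$ is a submonoid contained in $\calKC$ (because $Y_4\calIC \subset \calC[4] \subset \calKC$), any two $Y_4$-equivalent cylinders of $\calKC$ differ by multiplication by an element of $Y_4\calIC$, and the monoid-homomorphism property of $R_\circ$ then forces their images to coincide.

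Let $N \in Y_4\calIC$. Since $Y_4\calIC \subset \calC[4]$, one has $\tau_2(N)=0$, so $R_\circ(N) = r_4^\theta(N) \mod \calT_4(H)$, which via $(\eta^\Q)^{-1}$ corresponds to the class of $\tau_4(N)$ in $\mathsf{D}_4(H^\Q)/\eta(\calT_4(H))$. Extending Lemma \ref{lem:R_M4} to homology cylinders (the proof is identical, using the cylinder versions of $\tau_4$ and $r_4^\theta$), this class lies in the $2$-torsion subgroup $\mathsf{D}_4(H)/\eta(\calT_4(H)) \simeq \frakL_3 \otimes \Z_2$ and coincides with $\varpi(\tau_4(N))$. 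So the goal becomes $\varpi(\tau_4(N)) = 0$.

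Now $\tau_4$ vanishes on $Y_5\calIC \subset \calC[5]$, hence descends to a homomorphism out of $Y_4\calIC/Y_5\calIC$, which by \eqref{eq:psi} is generated by the classes $\psi(D)$ for connected Jacobi diagrams $D$ of degree~$4$. So it suffices to check $\varpi(\tau_4(\psi(D))) = 0$ for every such $D$. When $D$ is a tree diagram, the standard clasper-calculus computation (following \cite{Goussarov,Habiro,GL}) yields $\tau_4(\psi(D)) = \pm\, \eta(D_H)$, where $D_H \in \calT_4(H)$ is the tree diagram obtained from $D$ by translating the colors $i^\pm \mapsto a_i, b_i$; hence $\varpi$ annihilates it. When $D$ contains loops, one applies the clasper-level STU or IHX relations to rewrite $\psi(D)$ modulo $Y_5\calIC$ as an integral combination of $\psi(D')$ for tree diagrams $D'$, returning to the previous case. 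The main obstacle is this looped-clasper reduction, but the techniques of \cite{Goussarov,Habiro,GL,CST16} are well-suited for it.
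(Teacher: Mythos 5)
Your overall strategy is essentially the one the paper follows: reduce to showing $R_\circ(U_{G''})=0$ for surgeries along degree-$4$ graph claspers $G''$ in the trivial cylinder, using that $Y_4\calIC \subset \calC[4] \subset \calKC$ and that the correction term $\tau_2\triangleright\tau_2$ drops out. Two points, however, need attention.

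First, a minor imprecision in the initial reduction. When you say that two $Y_4$-equivalent cylinders in $\calKC$ "differ by multiplication by an element of $Y_4\calIC$", the true statement is only up to $Y_5$-equivalence: by pushing a degree-$4$ clasper $G \subset M$ into a collar of the top boundary one gets $M_G \sim_{Y_5} M\circ U_{G''}$, with a genuine $Y_5$-discrepancy. You therefore need to observe, \emph{before} deducing anything from the monoid-homomorphism property, that $R_\circ$ already factors through $Y_5$-equivalence (because it is determined by the action on $\pi/\Gamma_6\pi$, as you implicitly use later). The paper makes this step explicit.

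Second, and more seriously, your treatment of the looped case is wrong. You cannot in general rewrite $\psi(D)$ for a looped connected Jacobi diagram $D$ of degree $4$ as an integral combination of $\psi(D')$ for trees $D'$, modulo $Y_5$. Already in degree $4$ there are looped (even closed, e.g.\ $K_4$) Jacobi diagrams whose clasper surgeries produce, up to $Y_5$, connected sums with homology $3$-spheres; the loop part of finite-type invariants of degree $4$ separates these from any tree surgery. If your reduction were correct, $Y_4\calIC/Y_5\calIC$ would be generated by tree claspers, which it is not. The fix is simpler than what you propose and is what the paper does: by \cite{GL}, $\tau_4(U_{G''})$ vanishes outright whenever the underlying Jacobi diagram is looped, since the Johnson homomorphisms record only the tree reduction of the clasper. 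So the case split is ``tree $\Rightarrow$ $\tau_4(U_{G''}) \in \calT_4(H)$ hence killed by $\varpi$'', ``looped $\Rightarrow$ $\tau_4(U_{G''})=0$''; no STU/IHX reduction of looped diagrams to trees is needed or possible.
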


\begin{proof}
Let $M \in \calKC$ and let $G\subset \hbox{int}(M)$ be a connected graph clasper with $4$ nodes.
Since the $Y_4$-equivalence is generated by surgeries of the type $M \leadsto M_G$, it is enough to prove that 
\begin{equation}\label{eq:rrrr}
R_\circ(M_G)=R_\circ(M).
\end{equation}
Let $N_+$ be a collar neighborhood  of the ``top'' boundary of the cobordism $M$.
Since $M$ is a homology cobordism, each leaf of $G$ is homologous to a framed knot contained in $N_+$.
Hence, by standard techniques of clasper calculus, we can find another connected graph clasper $G' \subset N_+$  with $4$ nodes such that 
\begin{equation} \label{eq:Y5}
M_G \sim_{Y_5} M_{G'}.
\end{equation}
Since the map $R_\circ$ is determined by $r_{[2,4]}^\theta$, it is determined by the action of $\calKC$ on 
the $5$-th nilpotent quotient $\pi/\Gamma_6 \pi$ of $\pi$. Hence $R_\circ$ factorizes through the $Y_5$-equivalence, 
and we deduce from \eqref{eq:Y5} that
$$
R_\circ(M_G) =R_\circ(M_{G'}).
$$
Identify $N_+$ with the trivial cylinder $U$ using the ``top'' boundary parametrization of $M$:
then $G' \subset N_+$ corresponds to yet another graph clasper $G'' \subset U$ with 4 nodes. Thus we obtain
$$
R_\circ(M_G) = R_\circ(M \circ U_{G''}) = R_\circ(M) + R_\circ(U_{G''}).
$$ 
Besides, $R_\circ(U_{G''}) = (\tau_4(U_{G''})\!\! \mod 1) $ is trivial, 
because $\tau_4(U_{G''})\in \calT_4(H^\Q)$
is $0$ if the Jacobi diagram $\underline{G''}$ underlying $G''$ is looped
and is  equal to $\underline{G''} \in  \calT_4(H)$ otherwise \cite{GL}. 
Thus, we conclude to~\eqref{eq:rrrr}.
\end{proof} 

Consider the following Jacobi diagrams of degree $1$:
$$
T_1 := \ltritree{1^-}{2^-}{3^-}, \ T_2 := \ltritree{1^-}{1^+}{2^-}, \ T_3 := \ltritree{2^-}{2^+}{3^-}  \in \calA^Y_1,
$$
and note that 
$$
\big[T_1, \big[T_2,T_3\big]_\star\big]_\star  = \underbrace{\lfivetree{3^-}{2^-}{1^-}{2^-}{3^-}}_{T:=} \in \calA^Y_3.
$$
Consider now  a graph clasper in the trivial cylinder $U= \Sigma \times [-1,+1]$ of the following form:
$$
 \centre{\labellist \small \hair 2pt
 \pinlabel {$\overline T:=$} [r] at -80 200
\pinlabel {$\alpha''_3$} [bl] at 30 250
\pinlabel {$\alpha''_2$} [bl]  at 30 30
\pinlabel {$\alpha'_3$} at 0 220
\pinlabel {$\alpha'_2$} at 0 0
\pinlabel {$\alpha_1$} at 600 100
\endlabellist}
\includegraphics[scale=0.2]{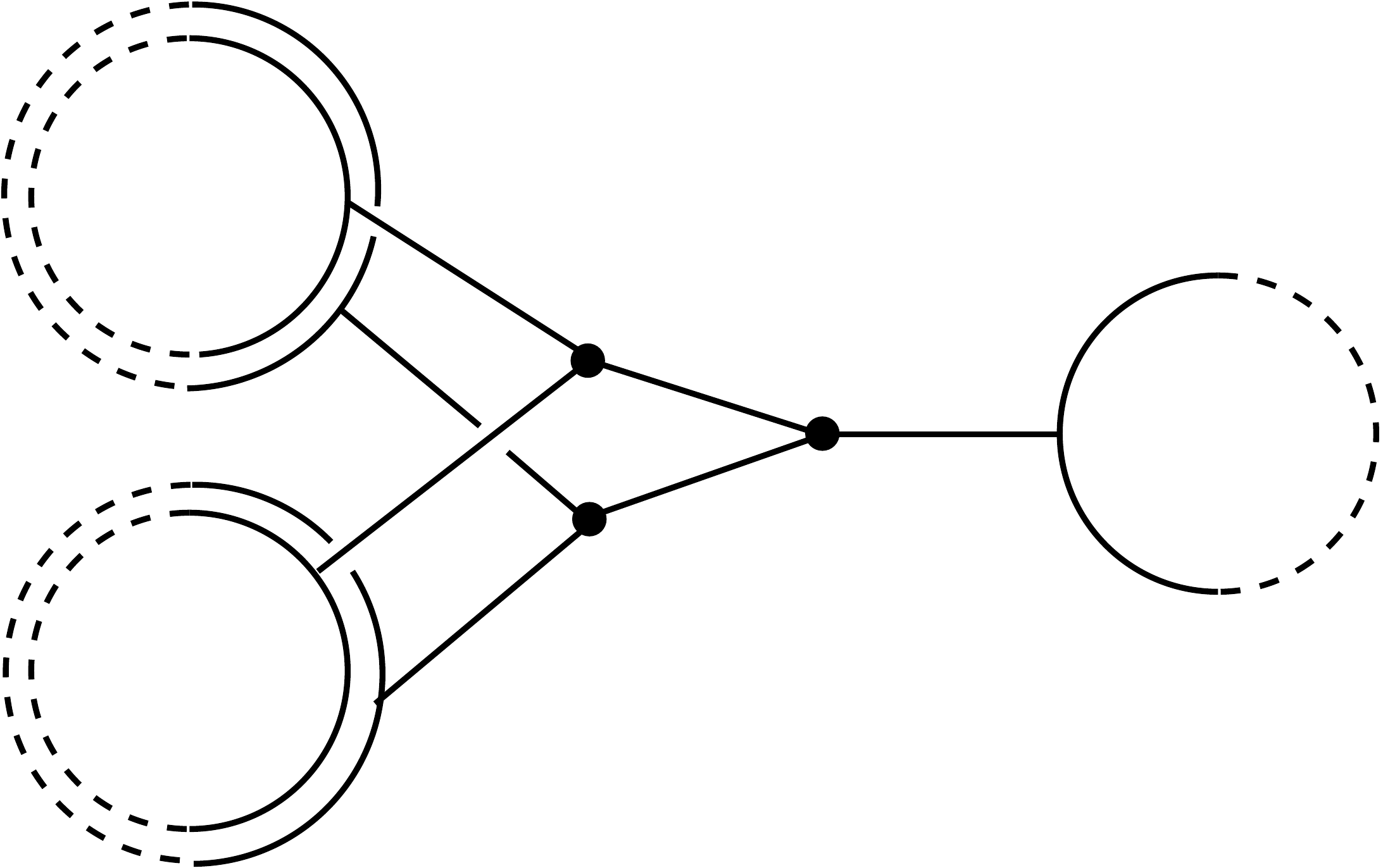}
$$
where $\alpha_1$ denotes a push-off of the framed curve $\alpha_1 \subset \Sigma \times \{-1\}$ and, for $i\in\{2,3\}$,
$\alpha'_i,\alpha''_i$ denote parallel copies of a push-off of the framed curve $\alpha_i \subset \Sigma \times \{-1\}$; note that
$$
\tau_3(U_{\overline T}) = T\vert_{i^-\mapsto \alpha_i} =0  \quad \hbox{(by the AS relation)}
$$
so that $U_{\overline T}$ belongs to $\calC[4]$.
Consider also a graph clasper in $U$ of the following form:
$$
{\labellist \small \hair 2pt
 \pinlabel {$S:=$} [r] at -80 200
\pinlabel {$\alpha_3$} at 0 220
\pinlabel {$\alpha_2$} at 0 0
\pinlabel {$\alpha_1$} at 600 100
\endlabellist}
\includegraphics[scale=0.2]{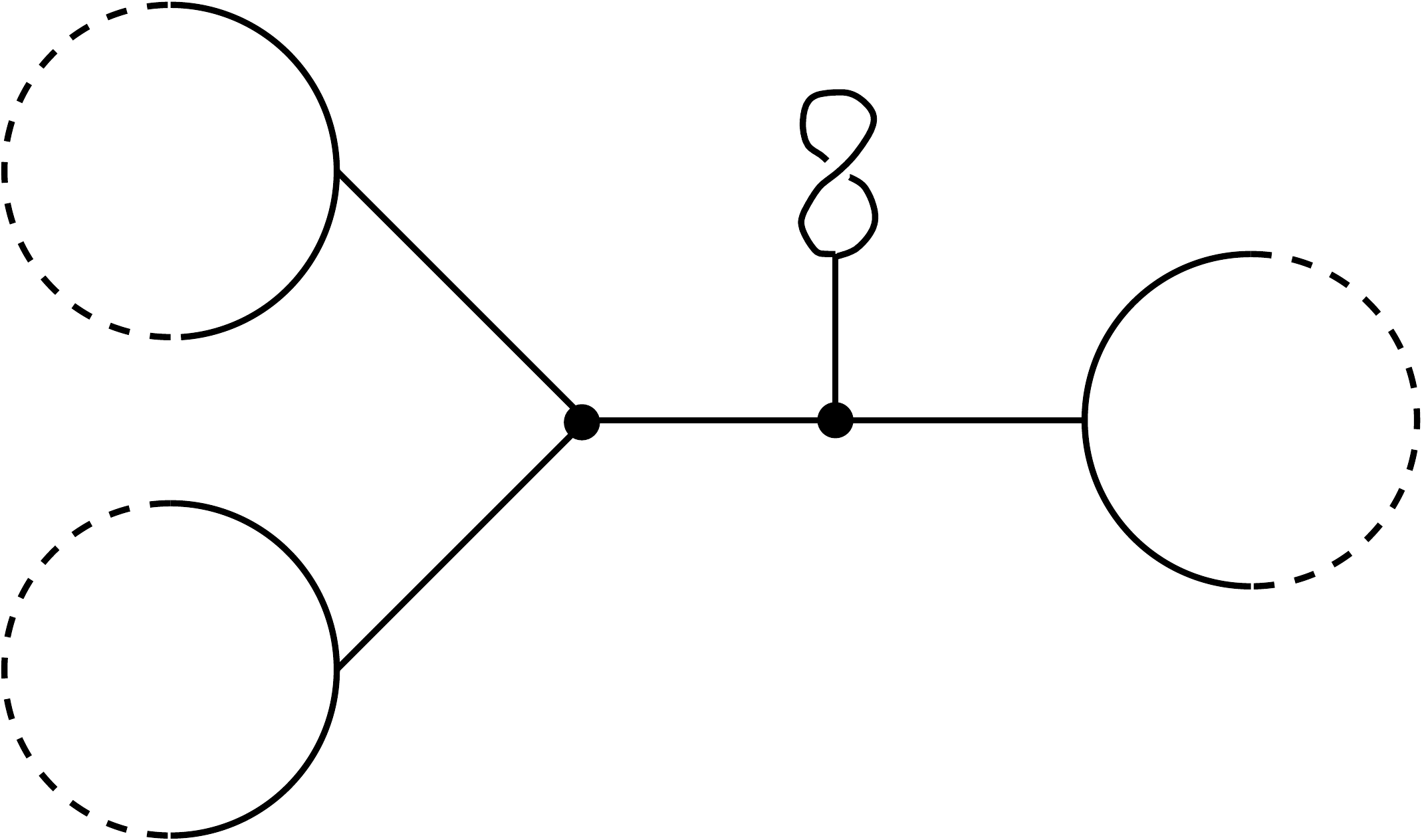}
$$
 By a result of Conant, Schneiderman and Teichner \cite[proof of Lemma 33]{CST16}, the homology cylinders $U_{\overline T} $ and $U_{S}$ are, 
 up to surgeries along graph claspers with $4$ nodes, related by a $4$-dimensional 
 homology cobordism. Since $R_\circ$ is invariant under $4$-dimensional homology cobordism (because $r^\theta$ is so)
 and since $R_\circ$ is invariant under the $Y_4$-equivalence too (by Lemma \ref{lem:Y4}), we deduce that
 \begin{equation} \label{eq:RRR}
R_\circ(U_{\overline T}) = R_\circ(U_{S}).
 \end{equation}
 It follows also from \cite[\S 3.8]{CST16} (or, alternatively, from  \cite[Th$.$ B]{KM} which is more general) that
 \begin{equation} \label{eq:tau_4(U_T)}
 \tau_4(U_{S}) = \frac{1}{2} \lsixtree{a_3}{a_2}{a_1}{a_1}{a_2}{a_3}.
 \end{equation}
Besides, since the map \eqref{eq:psi} is a  Lie homomorphism and $\overline{T}$ is a ``realization'' of ${T}$, we have 
 $$
 \big[\psi_1(T_1), \big[\psi_1(T_2),\psi_1(T_3)\big]\big] =  \psi_3(T) =  - \big( U_{\overline T} \mod Y_4 \big).
 $$
 Next, since $\mathbf{c}: \calI / \Gamma_2 \calI \to \calIC/ Y_2\calIC$ is an isomorphism  in genus $g\geq 3$ \cite{MM}, 
 we can find $\varphi_i\in \calI$ such that
 \begin{equation} \label{eq:cond_2}
\psi_1(T_i) = \big(\mathbf{c}(\varphi_i) \!\! \mod Y_2\big).
 \end{equation}
So, the  mapping cylinder of the inverse of $\varphi':=[\varphi_1,[\varphi_2,\varphi_3]] \in \Gamma_3\!  \calI$ is $Y_4$-equivalent to $ U_{\overline T} $:
in particular, we have $\tau_3(\varphi')=-\tau_3(U_{\overline T})=0$ so that $\varphi' \in \calM[4]$.
 Finally, thanks to Lemma \ref{lem:Y4}, we conclude  from \eqref{eq:RRR} and \eqref{eq:tau_4(U_T)} that
 $$
 R_\circ(\varphi')  = R_\circ(U_{\overline T})  = \frac{1}{2} \lsixtree{a_3}{a_2}{a_1}{a_1}{a_2}{a_3}\! \mod 1.
 $$

\begin{remark}
We can give an explicit example of an element $\varphi_i \in \calI$ satisfying \eqref{eq:cond_2} for $i\in \{1,2,3\}$,
which leads to an explicit formula for $\varphi'=[\varphi_1,[\varphi_2,\varphi_3]]$. Indeed, according to \cite[Th$.$ 1.3]{MM},
the property \eqref{eq:cond_2}  is equivalent to the double condition
\begin{equation} \label{eq:cond_2_bis}
 \tau_1(\varphi_i) = \tau_1(\psi(T_i)) \in \Lambda^3 H \quad \hbox{and}  \quad  \beta(\varphi_i) = \beta(\psi(T_i)) \in B_{\leq 3},
\end{equation}
where $\beta$ denotes the Birman--Craggs homomorphism with values in the space $B_{\leq 3}$ of cubic  boolean functions on the space
$\hbox{Spin}(\Sigma)$ of spin structures on $\Sigma$. By \cite[Lemma 4.22]{MM} and \cite[Lemma 4.23]{MM}, respectively, we have
$$
 \tau_1(\psi(T_1))  =  \ltritree{a_2}{a_1}{a_3}, \quad
 \tau_1(\psi(  T_2 )) = \ltritree{b_1}{a_1}{a_2}, \quad
 \tau_1(\psi(  T_3 )) = \ltritree{b_2} {a_2}{a_3}
$$ 
and 
$$
 \beta(\psi(T_1))  =  \overline{a_2} \cdot \overline{a_1} \cdot \overline{a_3}, \quad
 \beta(\psi(  T_2 )) = \overline{b_1} \cdot \overline{a_1}  \cdot \overline{a_2}, \quad
 \beta(\psi(  T_3 )) = \overline{b_2}  \cdot  \overline{a_2}  \cdot \overline{a_3}.
$$ 
Here, identifying $\hbox{Spin}(\Sigma)$ with the space $\mathcal{Q}$ of quadratic functions $H \otimes \mathbb{Z}_2  \to \Z_2$
whose polar form is the mod $2$ intersection form of the surface, we associate to any $z\in H$
the affine boolean  map  $\overline{z}:\mathcal{Q} \to  \Z_2$  defined by $\overline{z}(q):= q(z\otimes 1)$.
Thus, using \cite[Lemma 4.B]{Jo80a} and \cite[\S 7]{Jo80b},  
we see that the following instances of $\varphi_1,\varphi_2,\varphi_3$ satisfy \eqref{eq:cond_2_bis}:
$$
\varphi_1 := i \,(T_{d'}T_{d''}), \quad  \varphi_2 := \big(T_{e^+} T_{e^-}^{-1}\big) \, T_{f} , \quad  \varphi_3 := \big(T_{u^+} T_{u^-}^{-1}\big) \, T_{v}
$$
  Here $i$ is the product of two bounding pair maps defined by \eqref{eq:i}
 and $T_{d'},T_{d''}$ are  the separating twists along the curves   $d',d''$ given by Figure \ref{fig:more_curves}, thus defining $\varphi_1$;
 also, $\varphi_2$ and $\varphi_3$ are defined as products of a bounding pair map and a separating twist, whose
 curves  are also shown  in Figure~\ref{fig:more_curves}.   \hfill $\blacksquare$
  \begin{figure}[h!]
 \begin{tabular}{c}
 \includegraphics[scale=0.5]{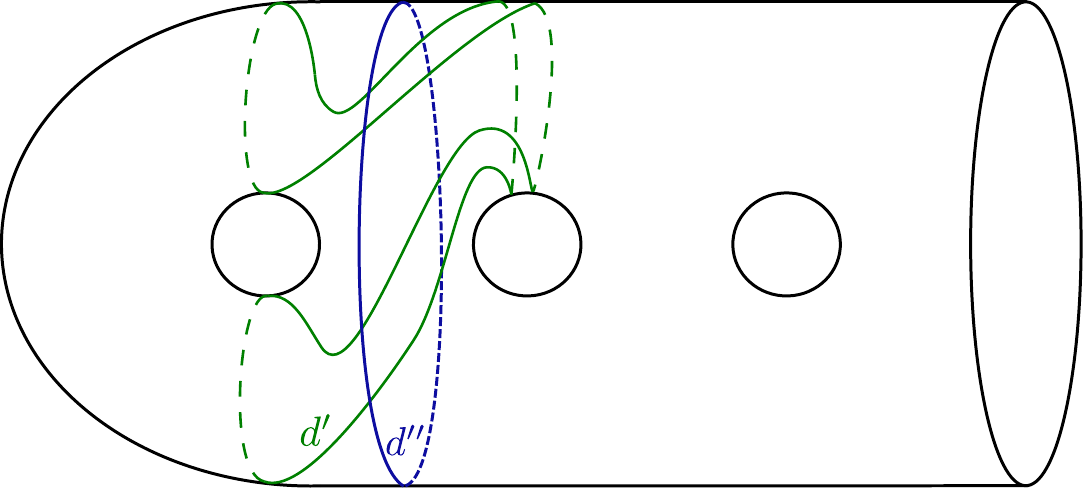}\\[0.5cm] 
 \includegraphics[scale=0.5]{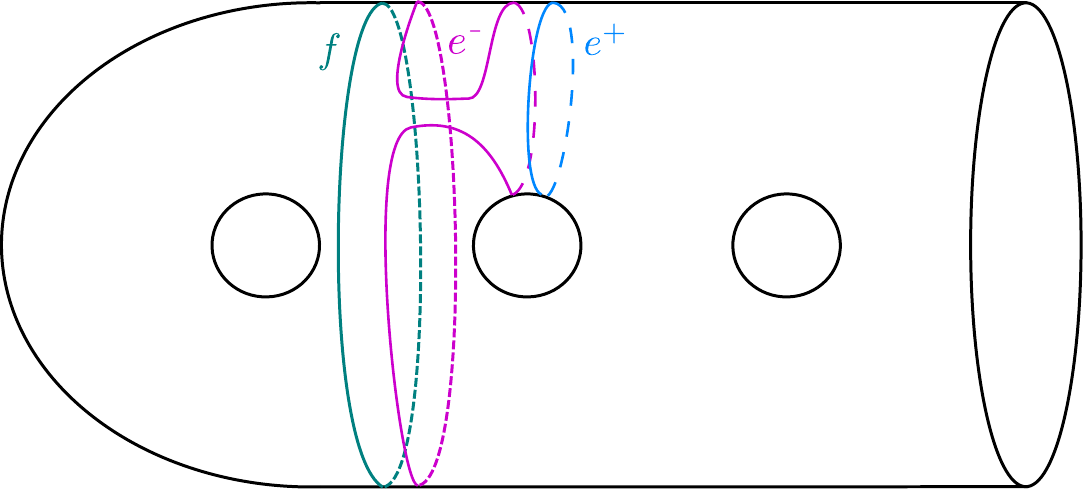}\qquad 
  \includegraphics[scale=0.5]{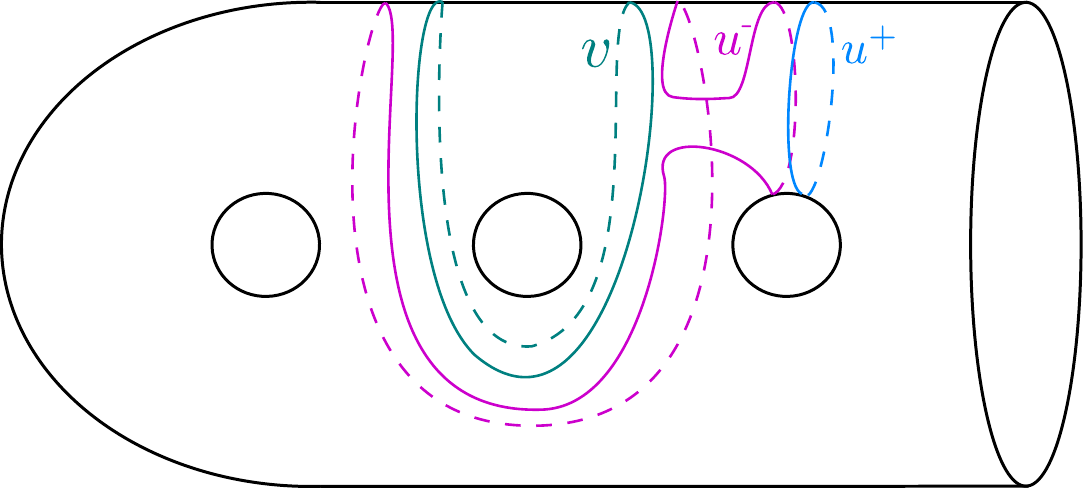}  
  \end{tabular}
\caption{The curves defining instances of $\varphi_1$, $\varphi_2$ and $\varphi_3$} \label{fig:more_curves}
\end{figure}
\end{remark}

\subsection{Complements} \label{subsec:complements}

We conclude by discussing the size of the torsion subgroup of the abelianized Johnson kernel. For that, we will give a lower bound 
on the size of the image of the map $R_{\operatorname{ab}}: \calK_{\operatorname{ab}} \to \calT_4(H^\Q)/\calT_4(H)$:
\begin{equation} \label{eq:ineq1}
\left\vert \operatorname{Tors} (\calK_{\operatorname{ab}} ) \right\vert \geq 
\left\vert  R_{\operatorname{ab}}\big(\operatorname{Tors} (\calK_{\operatorname{ab}} )\big) \right\vert
\end{equation}
Note that the above inequality may be strict. Indeed
we have the  inclusions of groups
$$
\{0\} \subset \frac{{\Gamma_4 \calI}\cdot \calK'}{\calK'}\subset \underbrace{\frac{\sqrt{\Gamma_4 \calI}}{\calK'}}_{=\operatorname{Tors} (\calK_{\operatorname{ab}} )} \subset\frac{{\calM[4]}}{\calK'}  \subset   \calK_{\operatorname{ab}}
$$
where the description of $\operatorname{Tors} \big(\calK_{\operatorname{ab}}\big)$   has been justified just after \eqref{eq:ker(j)}.   
Since $\tau_4$ maps $\Gamma_4\calI$ to $\calT_4(H) \subset \calT_4(H^\Q) $, we deduce from Lemma \ref{lem:R_M4} 
 that  $R_{\operatorname{ab}}$ vanishes on  ${\big({\Gamma_4 \calI}\cdot \calK'}\big)/{\calK'}$. 
 Hence, if $\Gamma_4 \calI$ is not included in $\calK'$ (which the authors ignore), then $R_{\operatorname{ab}}$
is not enough to detect all the torsion of~$\calK_{\operatorname{ab}}$. 

\begin{remark}
Actually, the authors do not even know whether $R_{\operatorname{ab}}$ has only $2$-torsion. \hfill $\blacksquare$
\end{remark}

To obtain a lower bound on the size of the image of $R_{\operatorname{ab}}$,
we use  Lemma \ref{lem:R_M4}, the $\calM$-equivariance property of $\tau_4$ and formula \eqref{eq:dos}:
\begin{equation} \label{eq:ineq2}
\left\vert  R_{\operatorname{ab}}\big(\operatorname{Tors} (\calK_{\operatorname{ab}} )\big) \right\vert
 \geq  \left\vert  R_{\operatorname{ab}}\big( \langle \varphi\rangle_{\calM} )\big) \right\vert
 =  \left\vert  \varpi \tau_4 \big( \langle \varphi\rangle_{\calM} )\big) \right\vert 
 =  \left\vert \big\langle [[a_1,a_2],a_3] \rangle_{ \operatorname{Sp}(H)} \right\vert 
\end{equation}
Here $ \langle \varphi\rangle_{\calM} $ is the $\calM$-submodule of $\calK_{\operatorname{ab}}$ generated by $\{\varphi\}$
and $ \big\langle [[a_1,a_2],a_3] \rangle_{ \operatorname{Sp}(H)}$ is the $\operatorname{Sp}(H)$-submodule of 
$\frakL_3\otimes \Z_2$ generated by $ [[a_1,a_2],a_3]$. 
(Here and in the sequel, we simply denote by $x$ the element $x\otimes 1$ of   $\frakL_3\otimes \Z_2$ defined by any $x\in \frakL_3$.)

\begin{lemma} \label{lem:ses}
We have a split short exact sequence of $\operatorname{Sp}(H)$-modules 
$$
\xymatrix{
0\ar[r] & H \otimes \Z_2 \ar[r]^-{[\omega,-]} & \frakL_3\otimes \Z_2   \ar[r]^-p  \ar@/^1pc/[l]^-\stigma &  \clo{\frakL}_3\otimes \Z_2  \ar[r]  & 0,
}
$$
where $p$ is the canonical projection and $\stigma: \frakL_3 \otimes \Z_2  \to  H \otimes \Z_2 $ is defined by 
$\stigma([[a,b],c])=\omega(b,c) \, a  + \omega(a,c)  \, b $.
\end{lemma}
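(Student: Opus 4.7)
The plan is to verify, in order: (i) well-definedness of $\stigma$, (ii) the splitting identity $\stigma \circ [\omega,-] = \operatorname{id}$ modulo $2$, (iii) exactness of the sequence, and (iv) $\operatorname{Sp}(H)$-equivariance. For step (i), I would first set up a candidate bilinear map $\sigma: H \otimes \frakL_2 \to H \otimes \Z_2$ by $c \otimes [a,b] \mapsto \omega(b,c)\,a + \omega(a,c)\,b \pmod 2$. Since the right-hand side is symmetric in $(a,b)$ whereas $[a,b] = -[b,a]$, this formula is only well-defined after reducing modulo $2$. To descend $\sigma$ to $\frakL_3 \otimes \Z_2$ one uses the exact sequence
$$
\Lambda^3 H \longrightarrow H \otimes \frakL_2 \stackrel{[\cdot,\cdot]}{\longrightarrow} \frakL_3 \longrightarrow 0,
$$
in which the left-hand map encodes the Jacobi identity via $a \wedge b \wedge c \mapsto a \otimes [b,c] + b \otimes [c,a] + c \otimes [a,b]$. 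A direct computation shows that $\sigma$ annihilates the image of $\Lambda^3 H \otimes \Z_2$, thanks to the antisymmetry of $\omega$, so $\sigma$ factors through a well-defined $\stigma: \frakL_3 \otimes \Z_2 \to H \otimes \Z_2$ with the prescribed values on brackets $[[a,b],c]$.

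For step (ii), I compute in a symplectic basis $(a_i,b_i)_i$ with $\omega = \sum_i [a_i,b_i]$. Given $h = \sum_i (x_i a_i + y_i b_i) \in H$, we obtain
$$
\stigma\big([\omega, h]\big) = \sum_i \big(\omega(b_i,h)\,a_i + \omega(a_i,h)\,b_i\big) = \sum_i \big(-x_i a_i + y_i b_i\big),
$$
which reduces to $h$ modulo $2$; hence $\stigma \circ [\omega,-] = \operatorname{id}_{H \otimes \Z_2}$, and in particular $[\omega,-]$ is injective on $H \otimes \Z_2$. For step (iii), the integral map $[\omega,-]: H \to \frakL_3$ is itself injective, being a nonzero $\operatorname{Sp}(H^\Q)$-equivariant map out of the irreducible rational representation $H^\Q$; hence $H \cong [\omega,H]$ integrally. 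Since by definition $\clo{\frakL}_3 = \frakL_3 / [\omega,H]$, right-exactness of $\otimes \Z_2$ identifies $\clo{\frakL}_3 \otimes \Z_2$ with the cokernel of $[\omega,-]:H \otimes \Z_2 \to \frakL_3 \otimes \Z_2$. Combined with the obvious identity $p \circ [\omega,-] = 0$ (because $[\omega,h]$ lies in the ideal $\langle\langle \omega \rangle\rangle$), this yields the desired short exact sequence, split by $\stigma$. Step (iv) is automatic: all three maps have definitions involving only $\omega$ and the Lie bracket, both of which are $\operatorname{Sp}(H)$-invariant.

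The main point requiring care is the well-definedness of $\stigma$ in step~(i): it is essential to work modulo $2$, since no integral lift of $\stigma$ satisfying $\stigma \circ [\omega,-] = \operatorname{id}$ can exist, as the sign pattern in the computation of step~(ii) makes explicit.
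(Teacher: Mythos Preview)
Your proof is correct and follows essentially the same approach as the paper, which likewise checks that $\stigma$ is well-defined (viewing $\frakL_3$ as a quotient of $H^{\otimes 3}$) and then verifies the splitting identity $\stigma([\omega,h])=h$. The only minor variation is that the paper obtains exactness of the mod~$2$ sequence by invoking the torsion-freeness of $\clo{\frakL}_3$, whereas you deduce left-exactness directly from the retraction $\stigma$; both arguments are equally valid.
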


\begin{proof}
By definition of $\clo{\frakL}$, we have a short exact sequence $0 \to H \to \frakL_3 \to \clo{\frakL}_3 \to 0$
and, since  $\clo{\frakL}_3$ is torsion-free, this sequence remains exact after tensorization with $\Z_2$. 
Regarding $ \frakL_3$ as a quotient of $H^{\otimes 3}$, we easily verify that $\stigma$ is well-defined.
Besides, we have $\stigma([\omega,h])=h $ for all $h\in H$, 
showing that the  short exact sequence  is split.
\end{proof}

\begin{lemma} \label{lem:kernel}
With the notations of Lemma \ref{lem:ses} and for $g\geq 3$, we have $\ker \stigma= \big\langle [[a_1,a_2],a_3] \rangle_{ \operatorname{Sp}(H)} $.
\end{lemma}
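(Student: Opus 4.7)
My plan is first to establish the easy inclusion $\langle [[a_1,a_2],a_3]\rangle_{\operatorname{Sp}(H)} \subset \ker\stigma$, and then to tackle the reverse inclusion using the splitting from Lemma \ref{lem:ses}. The easy inclusion follows immediately from two observations: the map $\stigma$ is $\operatorname{Sp}(H)$-equivariant because $\omega$ is preserved by symplectic automorphisms; and $\stigma([[a_1,a_2],a_3]) = \omega(a_2,a_3)\,a_1 + \omega(a_1,a_3)\,a_2 = 0$ since the $a_i$'s are pairwise $\omega$-orthogonal.

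For the reverse inclusion, I will exploit the fact that, because the splitting $\stigma$ is itself $\operatorname{Sp}(H)$-equivariant, the projection $p$ in Lemma \ref{lem:ses} restricts to an $\operatorname{Sp}(H)$-equivariant isomorphism from $\ker\stigma$ to $\clo{\frakL}_3 \otimes \Z_2$. Thus it is enough to show that the image of $[[a_1,a_2],a_3]$ generates $\clo{\frakL}_3 \otimes \Z_2$ as an $\operatorname{Sp}(H)$-module.

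The core of the argument will be a case analysis showing that every iterated bracket $[[x,y],z]$, with $x,y,z$ ranging over the symplectic basis $\{a_1,\ldots,a_g,b_1,\ldots,b_g\}$, lies in the $\operatorname{Sp}(H)$-submodule generated by $[[a_1,a_2],a_3]$, modulo brackets already treated. The three tools I will use are: (i) permutations of the index set $\{1,\ldots,g\}$, which lift to elements of $\operatorname{Sp}(H)$; (ii) symplectic transvections $\tau_v\colon x \mapsto x + \omega(v,x)\,v$ for $v \in H$, which allow one basis vector to be replaced by its sum with another while fixing the rest; and (iii) the relation $[\omega,-] \equiv 0$ in $\clo{\frakL}_3$, used to re-express the ``diagonal'' brackets $[[a_i,b_i],z]$ as $\Z_2$-linear combinations of off-diagonal ones. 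The hypothesis $g \geq 3$ enters here, ensuring that three independent symplectic pairs are available so that transvections can move a single generator without disturbance.

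The main obstacle will be the bookkeeping in the case analysis, as one must cover all configurations of coincident indices and of mixed $a$-versus-$b$ entries; however, each individual case reduces to a one-line computation with a carefully chosen transvection. Once all iterated brackets $[[x,y],z]$ are shown to lie in the $\operatorname{Sp}(H)$-span of $[[a_1,a_2],a_3]$, the proof concludes by noting that such brackets $\Z$-span $\frakL_3$, and hence also span $\clo{\frakL}_3 \otimes \Z_2$.
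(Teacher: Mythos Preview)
Your proposal is correct and follows essentially the same route as the paper. The only cosmetic difference is that the paper stays inside $\frakL_3\otimes\Z_2$ and works with the projection $q(x)=x+[\omega,\stigma(x)]$ onto $\ker\stigma$, whereas you transport everything to $\clo{\frakL}_3\otimes\Z_2$ via the equivariant isomorphism $p$; the actual case analysis (permutations of indices, transvections, and the reduction of ``diagonal'' brackets via $[\omega,-]$) is the same, and the paper also uses the symplectic involutions $a_r\leftrightarrow -b_r$ to halve the number of cases, which you may find convenient when you write out the details.
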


\begin{proof} Set $S:=  \big\langle [[a_1,a_2],a_3] \rangle_{ \operatorname{Sp}(H)} $.
The $\operatorname{Sp}(H)$-equivariance of $\stigma$ implies that  $S \subset \ker \stigma$.
To prove the converse inclusion, we consider the projection $q: \frakL_3 \otimes \Z_2 \to \ker \stigma$ corresponding to the split short exact sequence of Lemma \ref{lem:kernel};
specifically, we have 
$$
q(x) = x +[\omega,\stigma(x)], \quad \forall x\in H.
$$
Let $Z=\{a_1,\dots,a_g,b_1,\dots,b_g\}$ and, for all $z,z'\in Z$, let us write $z \perp z'$ if we have $\{z,z'\}=\{a_i,b_i\}$ for some $i$.
Clearly, for any $z,z',z''\in Z$ with $z'\neq z''$, we have 
$$
q([[z',z''],z])= \left\{\begin{array}{lll}
[[z',z''],z] & \hbox{ if } (z\not\perp z' \hbox{ and }  z\not\perp z'')  & \hbox{(i)}  \\
{[[z',z''],z]} +[\omega,z'']& \hbox{ if } z\perp z' &\hbox{(ii)} \\
{[[z',z''],z]} +[\omega,z']& \hbox{ if } z\perp z'' &\hbox{(iii)}.\\
\end{array}\right.
$$
Since  $\frakL_3 \otimes \Z_2$ is generated by the elements $[[z',z''],z]$, for all $z,z',z''\in Z$ with $z'\neq z''$,
we deduce that $\ker \stigma$ is generated by the above elements of types (i), (ii) and (iii). Hence,
we are reduced to show that any element of type (i) and any element of type (ii) belong to $S$.
For that, we will use the following elements of $\operatorname{Sp}(H)$:
\begin{itemize}
\item for $x\in H$, $T_x$ is the transvection defined by $T_x(h) = h + \omega(x,h)x$;
\item for any $r, s \in \{1,\dots,g\}$ with $r\neq s$, $E_{rs}$ exchanges $a_r$ (resp$.$ $b_r$) with $a_s$ (resp$.$ $b_s$)
and fixes all other elements of $Z$;
\item for each $r \in \{1,\dots,g\}$, $F_r$ maps $a_r$ (resp$.$ $b_r$) to $-b_r$ (resp$.$ $a_r$)  and fixes all other elements of~$Z$.
\end{itemize}

We start by considering the elements of type (i). They can be of the following forms:
\begin{eqnarray*}
[[a_i,a_j],a_k] &&  \hbox{with ($i,j,k$ pairwise disjoint) or ($k=j$ and $i\neq j$)},  \\
 {[[a_i,b_j],a_k]} &&   \hbox{with ($i,j,k$ pairwise disjoint) or ($k=i$ and $i\neq j$)  or ($j=i$ and $k\neq i$)}, \\
 {[[b_i,b_j],a_k]}  &&  \hbox{with ($i,j,k$ pairwise disjoint)},\\
 {[[b_i,b_j],b_k]}   && \hbox{with ($i,j,k$ pairwise disjoint) or ($k=j$ and $i\neq j$)},  \\
 {[[b_i,a_j],b_k]}  && \hbox{with ($i,j,k$ pairwise disjoint) or ($k=i$ and $i\neq j$) or ($j=i$ and $k\neq i$)}, \\
 {[[a_i,a_j],b_k]} && \hbox{with ($i,j,k$ pairwise disjoint)}.
\end{eqnarray*}
The last three forms can be derived from the first three forms by applying symplectic transformations of type $F_r$:
hence it suffices to consider the first three forms.
We have 
\begin{eqnarray*}
T_{b_1}\cdot  [[a_1,a_2],a_3] &=&  [[a_1,a_2],a_3]  +  [[b_1,a_2],a_3]\\
\hbox{and} \quad T_{b_2} \cdot [[b_1,a_2],a_3] &= &[[b_1,a_2],a_3]  + [[b_1,b_2],a_3],
\end{eqnarray*}
hence we obtain  that $ [[b_1,a_2],a_3] \in S$ and $[[b_1,b_2],a_3] \in S$. By using the transformations of type~$E_{rs}$, we deduce that 
 all $[[a_i,a_j],a_k]$, $ [[a_i,b_j],a_k]$ and $[[b_i,b_j],a_k]$ with $i,j,k$ {pairwise disjoint}, belong to $S$. Furthermore, we have
\begin{eqnarray*}
T_{a_2+a_3}\cdot [[a_1,a_2],b_3] &= &[[a_1,a_2],b_3] +  [[a_1,a_2],a_2] +  [[a_1,a_2],a_3] \\
\hbox{and} \qquad\quad  F_1 \cdot [[a_1,a_2],a_2] &=& [[b_1,a_2],a_2],
\end{eqnarray*}
hence we obtain that  $ [[a_1,a_2],a_2] \in S$ and $[[b_1,a_2],a_2] \in S$. 
By using the transformations of type~$E_{rs}$, we deduce that $[[a_i,a_j],a_j]\in S$ and $ [[a_i,b_j],a_i]\in S$ for all $i\neq j$.
Finally, we have 
$$
T_{b_1} \cdot [[a_1,a_2],a_1] =[[a_1,a_2],a_1] + [[b_1,a_2],b_1]   + \underbrace{[[b_1,a_2],a_1]  + [[a_1,a_2],b_1]}_{=[[a_1,b_1],a_2]}
$$
and we deduce that $[[a_1,b_1],a_2] \in S$: hence we have $ {[[a_i,b_i],a_k]} \in S$ for all $i\neq k$. 
Thus we have checked that all elements of type (i) belong to $S$.

We now consider the elements of type (ii). They can be of the following forms:
$$
 q([[a_i,b_i],b_i]),  \quad q([[b_i,a_j],a_i])  \ \hbox{with $i\neq j$},   \quad q([[a_i,a_j],b_i]) \ \hbox{with $i\neq j$},
$$
$$
 q([[b_i,a_i],a_i]), \quad  q([[a_i,b_j],b_i])   \ \hbox{with $i\neq j$}, \quad q([[b_i,b_j],a_i]) \ \hbox{with $i\neq j$}.
$$
The last three forms can be derived from the first three forms by applying symplectic transformations of type $F_r$:
hence it suffices to consider the first three forms.
In the sequel, we denote by $\equiv$ the congruence modulo $S$. First, note that
$$
q([[a_i,b_i],b_i])= [[a_i,b_i],b_i] + [\omega,b_i] \equiv  [[a_i,b_i],b_i]  +  [[a_i,b_i],b_i]  =0
$$
where the congruence follows from the consideration of elements of type (i). 
Second, we have 
$$
q([[b_i,a_j],a_i]) = [[b_i,a_j],a_i] + [\omega,a_j] \equiv [[b_i,a_j],a_i] + [[a_j,b_j],a_j].
$$
Let $G_{ij}$ be the symplectic transformation of $H$ that maps $a_i$ to $a_i+a_j$, $b_i$ to $b_i$, $a_j$ to $-a_j$, $b_j$ to $b_i-b_j$,
and fixes all other elements of $S$. Observe that
\begin{eqnarray*}
G_{ij} \cdot [[a_j,b_j],a_i] &=& [[a_j,b_i+b_j],a_i+a_j] \\
&=& [[a_j,b_i],a_i] + [[a_j,b_i],a_j]  + [[a_j,b_j],a_i]  + [[a_j,b_j],a_j] \\
&\equiv &  [[a_j,b_i],a_i]  + [[a_j,b_j],a_j] 
\end{eqnarray*}
where the congruence follows from the consideration of elements of type (i). Thus we deduce that  $q([[b_i,a_j],a_i]) \in S$.
Third, we have
$$
q([[a_i,a_j],b_i]) = [[a_i,a_j],b_i] + [\omega, a_j] \equiv  [[a_i,a_j],b_i] + [[a_j,b_j], a_j] \equiv [[a_j,b_i],a_i]  + [[a_j,b_j],a_j]
$$
where the last conguence follows from the Jacobi identity: hence we are back to the second form of elements of type (ii).
\end{proof}

We can now conclude by giving lower bounds on the sizes   of the torsion parts  of the abelianized Johnson kernels.

\begin{proposition} \label{prop:lower_bounds}
Assume that $g\geq 6$. The cardinality of $\operatorname{Tors} (\calK_{\operatorname{ab}} ) $ is at least $2^{\frac{8}{3}(g^3-g)}$,
and the  cardinality of $ \operatorname{Tors} (\clo{\calK}_{\operatorname{ab}} ) $ is at least $2^{\frac{1}{3}(g^3-4g)}$.
\end{proposition}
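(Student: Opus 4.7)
The plan is to bound each torsion subgroup from below by the cardinality of the $\operatorname{Sp}(H)$-submodule generated by $[[a_1, a_2], a_3]$, using the inequalities \eqref{eq:ineq1}--\eqref{eq:ineq2} together with the $\operatorname{Sp}(H)$-equivariance of $\tau_4$ and formulas \eqref{eq:dos}--\eqref{eq:dos'}. For the bordered case, this will reduce immediately to Lemma \ref{lem:kernel}, which identifies the relevant submodule as $\ker\stigma$; the split short exact sequence of Lemma \ref{lem:ses} will then yield $|\ker\stigma| = |\frakL_3 \otimes \Z_2| / |H \otimes \Z_2|$, and Witt's formula $\dim_{\Z_2} \frakL_3 = \frac{(2g)^3 - 2g}{3}$ will give $|\ker\stigma| = 2^{(8/3)(g^3 - g)}$, exactly the first claimed bound.

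For the closed case, the analogous reduction via Lemma \ref{lem:R_M4_closed} and formula \eqref{eq:dos'} will yield $|\operatorname{Tors}(\clo{\calK}_{\operatorname{ab}})| \geq |M|$, where $M$ is the $\operatorname{Sp}(H)$-submodule of $L$ generated by $[[a_1, a_2], a_3]$. To lower bound $|M|$, I will compose the canonical projection $\clo\frakL_3 \otimes \Z_2 \twoheadrightarrow L$ from \eqref{eq:parallelogramme} with the (non-equivariant) group homomorphism $L \to \frakL_3(A) \otimes \Z_2$ induced by $H \twoheadrightarrow A := H/\langle b_1,\ldots, b_g\rangle$, namely the factorization of the map $m$ appearing in the proof of Lemma \ref{lem:R_M4_closed}. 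The cardinality $|M|$ will then dominate that of its image in $\frakL_3(A) \otimes \Z_2$, which is the $\Z_2$-submodule generated by the set $\{[[p\psi(a_1), p\psi(a_2)], p\psi(a_3)] : \psi \in \operatorname{Sp}(H)\}$, where $p: H \twoheadrightarrow A$ denotes the projection.

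The key step will be to show this generating set spans all of $\frakL_3(A) \otimes \Z_2$. Applying Witt's theorem to $\operatorname{Sp}_{2g}(\Z_2)$ furnishes transitivity on ordered linearly independent isotropic triples in $H \otimes \Z_2$ for $g \geq 3$, so the above set coincides with $\{(p(v_1), p(v_2), p(v_3)) : (v_i) \text{ isotropic linearly independent in } H \otimes \Z_2\}$. A realization argument---lifting any target $(u_1, u_2, u_3) \in (A \otimes \Z_2)^3$ to $\tilde u_i \in \langle a_1, \ldots, a_g\rangle \otimes \Z_2$ and, when the $\tilde u_i$ are linearly dependent, adjusting by a choice of $w_i \in \langle b_1, \ldots, b_g\rangle \otimes \Z_2$ that simultaneously restores linear independence and preserves pairwise $\omega$-orthogonality mod~$2$---will show every such target is realized. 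Since $\{[[u_1,u_2],u_3] : u_i \in A \otimes \Z_2\}$ spans $\frakL_3(A) \otimes \Z_2$ by the Jacobi relation, a further application of Witt's formula will yield $|M| \geq 2^{(g^3 - g)/3} \geq 2^{(g^3 - 4g)/3}$, as required. I expect the realization step to be the main obstacle: it reduces to finding, for given coefficient matrix $C$ of the $\tilde u_i$, a coefficient matrix $D$ of the $w_i$ such that $CD^\top + DC^\top$ has vanishing off-diagonal entries and $\binom{C}{D}$ has rank~$3$; both conditions are easily satisfied given the $3g-3 \geq 6$ parameters available for $g \geq 3$.
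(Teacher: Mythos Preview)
Your bordered case is identical to the paper's argument: both invoke Lemma~\ref{lem:kernel} to identify $\langle [[a_1,a_2],a_3]\rangle_{\operatorname{Sp}(H)}=\ker\stigma$ and then read off the dimension from the split exact sequence of Lemma~\ref{lem:ses}.

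For the closed case you take a genuinely different route. The paper simply invokes diagram~\eqref{eq:RR} and Remark~\ref{rem:about_L} to obtain the bound $2^{\operatorname{rk}\frakL_3(A)-\operatorname{rk}(A)}$, whereas you push the $\operatorname{Sp}(H)$-submodule $M\subset L$ through the factorisation of $m$ and argue that its image is \emph{all} of $\frakL_3(A)\otimes\Z_2$, thereby obtaining the slightly stronger bound $2^{(g^3-g)/3}$. Your conclusion is correct, but the Witt--theorem transitivity and the realisation argument with the coefficient matrices $C,D$ are unnecessary: since $m:\frakL_3(H)\otimes\Z_2\to\frakL_3(A)\otimes\Z_2$ is surjective and kills $[\omega,H]\otimes\Z_2$ (because the projection $H\to A$ annihilates $\omega$), and since $\ker\stigma$ is a complement of $[\omega,H]\otimes\Z_2$ by Lemma~\ref{lem:ses}, the restriction $m|_{\ker\stigma}$ is already surjective onto $\frakL_3(A)\otimes\Z_2$. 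Equivalently, one can embed $\operatorname{GL}(A)$ in $\operatorname{Sp}(H)$ via the Levi subgroup and observe directly that the $\operatorname{GL}(A)$-orbit of $[[a_1,a_2],a_3]$ spans $\frakL_3(A)\otimes\Z_2$. One small point worth making explicit in your write-up: when you pass from $\psi\in\operatorname{Sp}(H)$ to isotropic triples in $H\otimes\Z_2$, you are tacitly using the surjectivity of the reduction $\operatorname{Sp}_{2g}(\Z)\to\operatorname{Sp}_{2g}(\Z_2)$.
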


\begin{proof}
We deduce from Lemma \ref{lem:kernel} that  $\big\langle [[a_1,a_2],a_3] \rangle_{ \operatorname{Sp}(H)}$
is isomorphic to $(\frakL_3(H)/H)\otimes \Z_2$ as a $\Z_2$-vector space.
Hence, thanks to \eqref{eq:ineq1} and \eqref{eq:ineq2}, we obtain the lower bound 
$$
\left\vert \operatorname{Tors} (\calK_{\operatorname{ab}} )\right\vert  \geq 2^{\operatorname{rk} \frakL_3(H) - \operatorname{rk} H}
$$ 
in the bordered case. In the closed case, it follows from diagram \eqref{eq:RR}  and Remark \ref{rem:about_L} that
$$
\left\vert \operatorname{Tors} (\clo{\calK}_{\operatorname{ab}} )\right\vert  \geq 2^{\operatorname{rk} \frakL_3(A) - \operatorname{rk}(A)}
$$
where $A$ is the quotient $H/\langle b_1,\dots,b_g\rangle$. 
\end{proof}

\begin{remark}
It is expected that the lower bounds of Proposition \ref{prop:lower_bounds} are far from optimal (at least in the closed case). \hfill $\blacksquare$
\end{remark}

\bigskip

\appendix

\section{Computation of the symplectic logansion} \label{sec:code}

\begin{lstlisting}[language=Python]
# Choose the genus and the nilpotency class

g=3
N=3


# The free Lie algebra on 2g generators a1,...,ag,b1,...,bg of nilpotency class N

L=LieAlgebra(QQ,2*g,step=N)
a=[L.gen(i) for i in range(g)]
b=[L.gen(i+g) for i in range(g)]


# Values of the symplectic logansion "theta" up to order N

theta_a = [ a[i] - (1/2) * L[a[i],b[i]]
+ (1/12) * L[L[a[i],b[i]],b[i]] -(1/24)* L[a[i],L[a[i],L[a[i],b[i]]]]
- (1/2) * L[ sum( L[a[j],b[j]] for j in range(i) ) , a[i] ]
+ (1/4) * L[ sum( L[a[j],b[j]] for j in range(i) ) , L[a[i],b[i]] ] 
for i in range(g) ]

theta_b = [ b[i] - (1/2) * L[a[i],b[i]] + (1/4) * L[L[a[i],b[i]],b[i]]
+ (1/12) * L[a[i],L[a[i],b[i]]] -(1/24)* L[L[L[a[i],b[i]],b[i]],b[i]]
+ (1/2) * L[ b[i], sum( L[a[j],b[j]] for j in range(i) ) ]
+ (1/4) * L[ sum( L[a[j],b[j]] for j in range(i) ) , L[a[i],b[i]] ]  
for i in range(g) ]


# Computation of theta from a string such a 'a1+b2-a1-' which encodes an element of the fundamental group

def theta(lis):
	res = L(0)
	for j in range(len(lis)/3):
            index = int(lis[3*j+1])-1
            if [lis[3*j],lis[3*j+2]]==['a','+']: res = L.bch(res,theta_a[index])
            if [lis[3*j],lis[3*j+2]]==['a','-']: res = L.bch(res,-theta_a[index])
            if [lis[3*j],lis[3*j+2]]==['b','+']: res = L.bch(res,theta_b[index])
            if [lis[3*j],lis[3*j+2]]==['b','-']: res = L.bch(res,-theta_b[index])
	return res


# Creation of strings corresponding to commutators in the fundamental group
	
def invert(lis):
	w=''
	for j in range(len(lis)/3):
		if lis[3*j+2]=='+':
			w = lis[3*j]+lis[3*j+1]+'-'+w
		else:
			w = lis[3*j]+lis[3*j+1]+'+'+w
	return w

def comm(a,b):
	return a+b+invert(a)+invert(b)
	
    
# Checks that the logansion theta is symplectic up to degree N

boundary = ''
for i in range(g): boundary = boundary + 'b' + str(i+1) + '-' + 'a' + str(i+1) + '+'  + 'b' + str(i+1) + '+' + 'a' + str(i+1) + '-'  

omegatilde = theta(boundary)
omega = sum(L[a[i],b[i]] for i in range(g))

if omegatilde==omega:
    print('OK: the expansion is symplectic up to order '+str(N)) 
else:
    print('Warning: the given expansion is not symplectic up to the given order !')


#  Display of an element of the nilpotent free Lie algebra
	
import sage.combinat.words.lyndon_word as lyndon_word

def transform(lis):
	if lis in [1..g]:
		return 'a'+str(lis)
	if lis in [(g+1)..(2*g)]:
		return 'b'+str(lis-g)
	if len(lis)==1:
		return transform(lis[0])
	else:
		return '[' + (transform([lis[0]])) + ',' + (transform(lis[1])) +']'

def display(x):
	Llist = (L.basis()).list()
	LW = LyndonWords(2*g,1).list()
	for i in [2..N]:
		LW = LW + LyndonWords(2*g,i).list()
	if x == 0:
		return '0'
	else:
		if x != x.leading_monomial():
			z=x.leading_coefficient()
			return display(x-z*x.leading_monomial())+'+('+str(z)+')*'+display(x.leading_monomial())
		else:
			for j in range(len(Llist)):
				if x == Llist[j]:
					return transform(lyndon_word.standard_bracketing(LW[j]))
\end{lstlisting}

\end{document}